\documentclass[11pt,reqno]{amsart}

\usepackage[T1]{fontenc} 
\usepackage[utf8]{inputenc}
\usepackage[english]{babel} 

\usepackage[dvipsnames]{xcolor}

\usepackage{url}
\usepackage{booktabs}
\usepackage{mathrsfs}
\usepackage{bbm}    % for \mathbbm{1}

\usepackage{amssymb}
\usepackage{amsmath} 
\usepackage{amsfonts,amssymb,mathtools}
\usepackage{amsthm} 
\usepackage{thmtools}

\usepackage[colorlinks=true, linkcolor=MidnightBlue, citecolor=OliveGreen, pagebackref]{hyperref}

\usepackage[capitalise, nameinlink, compress]{cleveref}
\usepackage[normalem]{ulem}

\newcommand{\PRG}{{\textup{PRG}}}

\DeclareMathOperator{\Z}{\mathbb{Z}}
\DeclareMathOperator{\Q}{\mathbb{Q}}
\DeclareMathOperator{\N}{\mathbb{N}}
\DeclareMathOperator{\R}{\mathbb{R}}
\DeclareMathOperator{\C}{\mathbb{C}}

\DeclareMathOperator{\id}{id}
\DeclareMathOperator{\Irr}{Irr}
\DeclareMathOperator{\Aut}{Aut}
\DeclareMathOperator{\rk}{rk}
\DeclareMathOperator{\absc}{absc}

\DeclareMathOperator{\Alt}{Alt}

\DeclareMathOperator{\SL}{\mathsf{SL}}

\DeclareMathOperator{\SU}{\mathsf{SU}}
\DeclareMathOperator{\Spin}{\mathsf{Spin}}
\DeclareMathOperator{\Sp}{\mathsf{Sp}}

\theoremstyle{plain}

\newtheorem{theorem}{Theorem}[section]
\newtheorem{lemma}[theorem]{Lemma}
\newtheorem{proposition}[theorem]{Proposition}
\newtheorem{corollary}[theorem]{Corollary}

\theoremstyle{definition}
\newtheorem{definition}[theorem]{Definition}
\newtheorem*{problem-IDP}{Invariant Degree Problem}
\newtheorem{example}[theorem]{Example}

\theoremstyle{remark}
\newtheorem{remark}[theorem]{Remark}

\numberwithin{equation}{section}
\title[Representations of quasi-semisimple groups]{Representation
  growth of \\ quasi-semisimple profinite groups}

\author[B. Klopsch]{Benjamin Klopsch} \address{Benjamin
  Klopsch: Heinrich-Heine-Universit\"at D\"usseldorf,
  Mathematisch-Na\-tur\-wis\-sen\-schaft\-liche Fakult\"at, Mathematisches
  Institut} \email{klopsch@math.uni-duesseldorf.de}
  
\author[M. Piccolo]{Margherita Piccolo} \address{Margherita Piccolo:
  FernUniversität in Hagen, Fakultät für Mathematik und Informatik}
\email{margherita.piccolo@fernuni-hagen.de}
  
\author[B. Sp\"ath]{Britta Sp\"ath} \address{Britta Sp\"ath: School of
  Mathematics and Natural Sciences, University of Wuppertal,
  Gau{\ss}str. 20, 42119 Wuppertal, Germany}
\email{bspaeth@uni-wuppertal.de}

\thanks{The research was conducted in the framework of the DFG-funded
  research training group ``GRK 2240: Algebro-Geometric Methods in
  Algebra, Arithmetic and Topology''.  The second author is a member
  of GNSAGA (INdAM) and kindly acknowledges their support.}
  
\keywords{Quasi-semisimple profinite groups, Representation growth}

\subjclass[2020]{Primary 20E18; Secondary 11M41, 20C15, 20C33, 20D06, 20F69}

\begin{document}

\begin{abstract}
  The representation zeta function of a profinite group $G$ encodes
  the distribution of continuous irreducible complex representations
  of $G$ as a function of the dimension.  Its abscissa of convergence
  $\alpha(G)$ describes the polynomial degree of representation growth
  of~$G$.

  Within the class of quasi-semisimple profinite groups, we
  characterise those of polynomial representation growth (\PRG{}) and
  we prove that whether such a group~$G$ has \PRG{} or not only
  depends on its semisimple part $G/\mathrm{Z}(G)$.  Moreover, we show
  that, for quasi-semisimple profinite groups $G$ that have uniformly
  bounded Lie ranks, the degree of growth satisfies
  $\alpha(G) = \alpha(G/\mathrm{Z}(G))$.  We provide a technique to
  produce, for any prescribed positive real number~$\varrho$,
  quasi-semisimple profinite groups $G$ with \PRG{} of degree
  $\alpha(G) = \varrho$.  Our method allows for considerable
  flexibility regarding the inclusion of finite simple groups of Lie
  type as composition factors of~$G$.  Furthermore, we can arrange for
  the groups $G$ of prescribed representation growth to be profinite
  completions of suitable finitely generated discrete groups~$\Gamma$
  so that the group $\Gamma$ has the same representation zeta function
  as~$G$.
\end{abstract}

\maketitle

%%%%%%%%%%%%%%%%%%%%%%%%%%%%%%%%%%%%%%%%%%%%%%

\section{Introduction} \label{sec:intro}

Let $G$ be a profinite group.  For $n\in \N$, we denote by
$r_n(G)$ the number of isomorphism classes of $n$-dimensional
continuous irreducible complex representations of~$G$.  In the
situation that $G$ is \emph{representation rigid}, i.e., $r_n(G)$ is
finite for all $n \in \N$, the \emph{representation zeta
  function}
\[
  \zeta_G(s) = \sum\nolimits_{n=1}^\infty r_n(G) \, n^{-s} \qquad (s \in
  \C)
\]
is used to encode the representation growth of~$G$.  A key invariant
is the abscissa of convergence $\alpha(G)$ of $\zeta_G$. Unless $G$ is
finite (in which case we resort to other means), $\alpha(G)$ lies in
$\R_{>0} \cup \{ \infty \}$ and equals the polynomial degree
of representation growth of~$G$ in the following sense:
\[
  \alpha(G) = \limsup_{n \in \N} \frac{\log R_n(G)}{\log n}, \quad
  \text{where $R_n(G) = \sum\nolimits_{k=1}^n r_k(G)$ for $n \in \N$.}
\]
In particular, $G$ has \emph{polynomial representation growth}
(\PRG{}), i.e., $R_n(G)$ is polynomially bounded as a function of~$n$,
if and only if $\alpha(G) < \infty$.  For $G$ to be representation
rigid, it is necessary that $G$ is \emph{small}, i.e., that it has
only finitely many open subgroups of index $n$ for each positive
integer $n$; in particular, $G$ is necessarily countably based.

Considerable effort has been directed toward the study of
representation growth of arithmetic lattices in semisimple locally
compact groups and related open compact subgroups of adelic groups;
see \cite{AKOV16,AKOV16b,AiAv18} and the references therein.  For
example, consider $\mathsf{G} = \SL_d$, for some $d \ge 2$, and let
$S \subseteq \mathbb{P}$ be a set of primes.  We may take interest in
the representations of the profinite group
$H = \mathsf{G}_\infty(S) = \prod_{p \in \mathbb{P} \smallsetminus S}
\SL_d(\Z_p)$, which can be regarded as a `profinite form' of the
$\Q$-linear algebraic group~$\mathsf{G}$.  Clearly,~$H$ surjects onto
the profinite group
$G = \prod_{p \in \mathbb{P} \smallsetminus S} \SL_d(\mathbb{F}_p)$, a
cartesian product of quasi-simple finite groups, and thus the
representation growth of $G$ provides a lower bound for the
representation growth of~$H$.  In their work on arithmetic groups,
Avni, Klopsch, Onn, and Voll~\cite{AKOV16} briefly touched upon the
representation growth of quasi-semisimple profinite groups such
as~$G$.  A more systematic study in this direction was started by
Garc\'ia Rodr\'iguez~\cite[Chapter~6]{Gar16}, in joint work with
Klopsch.  Recently, Corob Cook, Kionke and Vannacci studied the Weil
zeta functions of certain quasi-semisimple profinite groups;
see \cite[Theorem~6.1]{CoKiVa24}.

Quasi-semisimple profinite groups, in particular those of `bounded
type', play a significant role in the subject of subgroup growth;
see~\cite[Chapter~10]{LS03}.  We say that a profinite group $G$ is
\emph{quasi-semisimple} if $G$ is perfect, i.e.,
$G = \overline{[G,G]}$, and
$G/\mathrm{Z}(G) \cong \prod_{i \in I} S_i$, where $S_i$, $i \in I$,
is a family of non-abelian finite simple groups.  Equivalently, a
profinite group $G$ is quasi-semisimple if it is the inverse limit of
finite quasi-semisimple groups, i.e., finite central products of
quasi-simple finite groups.  The latter are usually called
`semisimple' for short, but we prefer to use the term
`quasi-semisimple' to emphasise the possible presence of a non-trivial
centre, in analogy to the usage of `quasi-simple' versus `simple'.  In
this article, a \emph{semisimple} profinite group is a quasi-semisimple
profinite group with trivial centre.

Since the Schur multiplier of a finite direct product of finite
perfect groups is the direct product of their Schur multipliers,
quasi-semisimple profinite groups are precisely the factor groups of
cartesian products of quasi-simple groups.  In particular, every
quasi-semisimple profinite group~$G$ with \emph{semisimple part}
$G/\mathrm{Z}(G) \cong \prod_{i \in I} S_i$, as above, has a
\emph{universal covering group} $\widetilde{G}$ that is isomorphic to
the cartesian product $\prod_{i \in I} \widetilde{S}_i$, where
$\widetilde{S}_i$ denotes the universal covering group of the
non-abelian finite simple group~$S_i$ for each $i \in I$.

Our first aim is to provide answers to the basic questions: which
quasi-semisimple profinite groups are representation rigid and which
of them have \PRG{}?

\begin{definition} \label{def:Mn-mn}
  Let $G$ be an infinite, countably based, quasi-semisimple profinite
  group and write $G/\mathrm{Z}(G) \cong \prod_{i \in \N} S_i$
  as a cartesian product of non-abelian finite simple groups.  For $n
  \in \N$, we set 
  \[
    m_n(G) = \# \{i \in \N \mid R_n(S_i) > 1 \} \in \N_0 \cup
    \{\infty\}
  \]
  and observe that these multiplicity counts only depend on the
  semisimple part of~$G$.
\end{definition}

Building upon \cite[Theorem~6.1 and~6.3]{Gar16} and using results of
Liebeck and Shalev~\cite{LS05}, which rely on Deligne--Lusztig
theory, we obtain the following basic result, which is reminiscent of
analogous statements for the subgroup growth of profinite groups.

\begin{theorem} \label{thm:criteria-rigid-PRG} Let $G$ be an infinite,
  countably based, quasi-semisimple profinite group. \samepage
  \begin{enumerate}
  \item[\textup{(1)}] If $G$ is finitely generated then $G$ is
    representation rigid. \samepage
  \item[\textup{(2)}] If $G$ has \PRG{} then $G$ is finitely
    generated.
  \item[\textup{(3)}] The group $G$ has \PRG{} if and only if $m_n(G)$
    is polynomially bounded in $n$.
  \end{enumerate}
  Moreover, in \textup{(1)} and \textup{(2)} the converse does not
  generally hold.
\end{theorem}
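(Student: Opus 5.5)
We reduce everything to the universal cover $\widetilde{G} = \prod_{i\in\N} \widetilde{S}_i$. Since $G$ is a quotient of $\widetilde{G}$, every continuous irreducible representation of $G$ inflates to one of $\widetilde{G}$. Conversely, every irreducible character of $\widetilde{G}$ restricts on the centre $\prod_i \mathrm{Z}(\widetilde{S}_i)$ to a scalar. Hence $r_n(G) \le r_n(\widetilde{G})$. Moreover, each irreducible representation of $\widetilde{G}$ factors through a finite subproduct and has the form $\chi_1\otimes\cdots\otimes\chi_k$ with $\chi_j \in \Irr(\widetilde{S}_{i_j})$ non-trivial. Its dimension is $\prod_j \chi_j(1)$, and each factor satisfies $\chi_j(1)\ge 2$.

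\emph{Part (1).} If $G$ is finitely generated, then for each isomorphism type $S$ only finitely many $S_i$ are isomorphic to $S$, since $S^k$ needs about $\log k$ generators. Each finite group has only finitely many irreducible characters of degree at most $n$, and a representation of dimension $n$ of $G$ involves at most $\log_2 n$ non-trivial tensor factors. So it suffices to see that only finitely many $S_i$ admit a non-trivial character of degree at most $n$. This holds because the minimal degree of a non-trivial projective representation of a finite simple group tends to infinity with $|S|$ (Landazuri--Seitz, together with the alternating and sporadic cases), and there are only finitely many simple groups of bounded order. Hence $r_n(\widetilde G)<\infty$.

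\emph{Part (3).} For the lower bound, the factor $S_i$ is a quotient of $G$. If $R_n(S_i)>1$, then $S_i$ has a non-trivial irreducible character of degree at most $n$, and these inflate to pairwise distinct characters of $G$. Hence $R_n(G)\ge m_n(G)$, so \PRG{} forces $m_n(G)$ to be polynomially bounded. For the converse we bound
\[
  R_n(G)\le R_n(\widetilde G)\le \sum_{k\le \log_2 n} \binom{m_n(G)}{k} \max_{i\colon R_n(S_i)>1} R_n(\widetilde S_i)^{k}.
\]
Here we need that $R_n(\widetilde S)$ is bounded polynomially in $n$ uniformly over all finite quasi-simple $\widetilde S$. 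For Lie type groups this comes from Liebeck--Shalev~\cite{LS05}: $\zeta_{S}(s)-1\to 0$ for fixed $s>0$ large, and the bound extends to covering groups via \cite[Theorem~6.1, 6.3]{Gar16}. For alternating groups one uses known bounds on the character degrees of $\widetilde{\Alt}(m)$, and the sporadic groups form a finite list. With that input the displayed bound is at most $(m_n(G)\, n^{c})^{\log_2 n}$, which is only quasi-polynomial.

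This is the main obstacle: the crude count must be replaced by a zeta-function argument. We write
\[
  \zeta_{\widetilde G}(s)=\prod_i \zeta_{\widetilde S_i}(s).
\]
Given a polynomial bound $m_n\le C n^{a}$, we group the factors by the minimal non-trivial degree $d_i$ and use $\zeta_{\widetilde S_i}(s)-1\le d_i^{-(s-c)}$ for large $s$. Then $\sum_i(\zeta_{\widetilde S_i}(s)-1)\le\sum_n m_n\, n^{-(s-c)}$, which converges for $s>a+c+1$. The infinite product therefore converges, so $\alpha(G)<\infty$.

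\emph{Part (2).} If $G$ has \PRG{}, then by (3) the number of $i$ with $S_i\cong S$ is at most $m_{d(S)}(G)\le C\,d(S)^a$, where $d(S)$ is the minimal non-trivial degree. By the Landazuri--Seitz bounds, $d(S)$ is polynomial in $\log|S|$-type data, so this multiplicity is at most $|S|^{c'}$. The standard criterion for products of finite simple groups then applies: $\prod S^{k_S}$ is finitely generated if $k_S$ grows at most polynomially in $|S|$, via $d(S^k)\le 2+\log_{|S|}k$-type bounds \cite[Chapter~10]{LS03}. Lifting generators through the Frattini-like central extension $G\to G/\mathrm{Z}(G)$, which holds since $G$ is perfect, shows that $G$ is finitely generated.

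\emph{Failure of the converses.} For (1), take $G=\prod_p \SL_2(\mathbb{F}_p)^{k_p}$ with $k_p$ growing like $2^{p}$. This group is still finitely generated, but $m_p$ is exponential, so \PRG{} fails by (3). For (2), note that the finite groups $\Alt(m)$ are rigid, but any infinite product in which each $S$ occurs too often fails to be finitely generated. So instead we need a group that is representation rigid but not finitely generated. Take $S_i$ pairwise non-isomorphic with $S_i^{k_i}$ for $k_i$ super-exponential in $|S_i|$. Rigidity still holds by the degree-tends-to-infinity argument in (1), since each $S$ appears only finitely often, while finite generation fails.
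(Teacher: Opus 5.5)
Your arguments for (1), for (2), and for the easy half of (3) are sound. Two things go wrong elsewhere.

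\textbf{The example for ``finitely generated does not imply \PRG{}'' fails.} You file it under (1), but the converse of (1) is ``rigid $\Rightarrow$ finitely generated'', so your two examples are swapped. Your rigid, non-finitely-generated example, with multiplicities outgrowing every power of $|S_i|$, is correct and essentially the paper's.

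The group $\prod_p \SL_2(\mathbb{F}_p)^{k_p}$ with $k_p\approx 2^p$ is \emph{not} finitely generated. It maps onto $\prod_p\mathsf{PSL}_2(p)^{k_p}$. If $S^k$ is $d$-generated, the $k$ coordinate projections of a generating $d$-tuple are generating tuples of $S$ that are pairwise inequivalent under $\Aut(S)$, so $k\le |S|^d$. But $2^p$ outgrows every fixed power of $\lvert\mathsf{PSL}_2(p)\rvert<p^3$.

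No choice of multiplicities can rescue this family. Finite generation forces $k_p\le p^{3d}$, and the minimal non-trivial degree of $\mathsf{PSL}_2(p)$ is at least $(p-1)/2$. Hence $m_n(G)\le\sum_{p\le 2n+1}p^{3d}$, which is polynomial, so finite generation already gives \PRG{} here.

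You need composition factors whose order is super-polynomial in their minimal degree, which means growing rank. The paper uses $\prod_{n\ge5}\Alt(n)^{n!/2}$. It is $3$-generated by Wiegold's theorem $d(S^{|S|})=3$, while $m_n\ge (n+1)!/2$ because $\Alt(n+1)$ has a character of degree $n$.

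\textbf{The ``if'' direction of (3) is missing a step.} The bound $\zeta_{\widetilde S_i}(s)-1\le K d_i^{-(s-c)}$ requires $d_i$ to be the minimal non-trivial degree of the cover $\widetilde S_i$. The number of $i$ with $d_i\le n$ is then $\widetilde m_n(G)=\#\{i\mid R_n(\widetilde S_i)>1\}$, not $m_n(G)$. The same conflation occurs in your first display. The hypothesis only controls the simple groups $S_i$, and a priori a faithful character of $\widetilde S_i$ might have much smaller degree than every non-trivial character of $S_i$.

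The paper supplies the comparison. For non-principal $\chi\in\Irr(\widetilde S)$, the character $\chi\overline{\chi}$ is trivial on $\mathrm{Z}(\widetilde S)$. It contains $\mathbbm{1}_{\widetilde S}$ exactly once. It is not a multiple of $\mathbbm{1}_{\widetilde S}$, since otherwise $\chi$ would be a non-trivial linear character of a perfect group. Hence $S$ has a non-principal irreducible character of degree at most $\chi(1)^2-1$, and $\widetilde m_n(G)\le m_{n^2}(G)$.

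With this inserted, your infinite-product argument is a legitimate alternative to the paper's direct count. The paper bounds the ordered factorisations of $n$ by $n^{\mu}$ (Lubotzky--Martin) and the index choices by $\prod_j\widetilde m_{d_j}\le n^b$. Both routes rest on the uniform bound $r_n(\widetilde S)\le n^{c}$, which comes from Liebeck--Shalev together with the spin characters of $2.\!\Alt(m)$.

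Two minor remarks. In (1), Jordan's theorem gives what you need without Landazuri--Seitz; it even yields ``rigid iff all multiplicities are finite''. In (2), the claim that the minimal degree is polynomial in $\log|S|$ is false, but you only need that it is less than $|S|$.
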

  
In particular, whether a quasi-semisimple profinite group $G$ has \PRG{},
or not, depends solely on the semisimple part of~$G$.  Empirical
evidence suggests that we may even expect a positive answer to the
following question.

\begin{problem-IDP}
  Suppose that $G$ is a quasi-semisimple profinite group with \PRG{}.
  Does it follow that $G$ and its universal covering group
  $\widetilde{G}$ have the same degree of polynomial representation
  growth, i.e., does $\alpha(G) = \alpha(\widetilde{G})$ hold?
\end{problem-IDP}

Using ideas and results from~\cite{AKOV16}, we obtain a partial answer
to this question.  To state our theorem, it is convenient to define
the \emph{Lie rank} $\rk_{\mathrm{Lie}}(S)$ of a finite non-abelian
simple group~$S$.  Apart from finitely many cases arising from
exceptional isomorphisms~\cite[Theorem~37]{St68}, the rank of $S$ is
determined as follows.  For finite simple groups of Lie type, arising
as described in \cref{thm:tits} below, the Lie rank is simply the rank
of the corresponding simple algebraic group; the Suzuki and Ree groups
have rank~$2$; for alternating groups we
set~$\rk_{\mathrm{Lie}}(\Alt(n)) = n$; the $26$ sporadic groups are
pragmatically assigned the Lie rank~$1$.  In the exceptional cases,
one and the same group $S$ allows for more than one description and
its Lie rank is the minimum of the corresponding values.  For
instance, $\Alt(5) \cong \mathsf{PSL}(2,4) \cong \mathsf{PSL}(2,5)$
has Lie rank~$1$.

A quasi-semisimple profinite group $G$ with semisimple part
$G/\mathrm{Z}(G) \cong \prod_{i \in I} S_i$, as above, is said to have
\emph{uniformly bounded Lie ranks}, if
$\sup \{ \rk_{\mathrm{Lie}}(S_i) \mid i \in I \}$ is finite, and
\emph{unbounded Lie ranks} otherwise.  If
$\{ i \in I \mid \rk_{\mathrm{Lie}}(S_i) \le \ell \}$ is finite for
each $\ell \in \N$, we say that $G$ has \emph{growing Lie ranks}.  We
show that the Invariant Degree Problem has a positive answer for
quasi-semisimple groups that have uniformly bounded Lie ranks.

\begin{theorem} \label{thm:unif-bounded-ranks-invariant-alpha} Let $G$
  be a quasi-semisimple profinite group with \PRG{} and let
  $\widetilde{G}$ be its universal covering group.  If $G$ has
  uniformly bounded Lie ranks, then
  $\alpha(G) = \alpha(\widetilde{G})$.
\end{theorem}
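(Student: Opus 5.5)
Since $G$ is a quotient of $\widetilde{G}$, every irreducible representation of $G$ inflates to one of $\widetilde{G}$. This gives $r_n(G)\le r_n(\widetilde G)$ and hence $\alpha(G)\le\alpha(\widetilde G)$. The work is in the reverse inequality, and the plan is to compare both with the semisimple part $\overline{G}=G/\mathrm{Z}(G)=\prod_{i}S_i$. We have $\overline G$ as a quotient of $G$, so $\alpha(\overline G)\le \alpha(G)$. It therefore suffices to prove $\alpha(\widetilde G)\le \alpha(\overline G)$, i.e.\ that passing to universal covers does not raise the abscissa when Lie ranks are bounded. By \cref{thm:criteria-rigid-PRG}, all three groups have \PRG{}, since $m_n$ depends only on the semisimple part. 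Write $\widetilde G=\prod_i \widetilde S_i$. Irreducible continuous representations of this product are tensor products $\bigotimes_i \chi_i$ with all but finitely many $\chi_i$ trivial, so
\[
  \zeta_{\widetilde G}(s)=\prod_i \zeta_{\widetilde S_i}(s), \qquad \zeta_{\overline G}(s)=\prod_i \zeta_{S_i}(s),
\]
as formal Euler-type products. For $s$ real with $s>$ the relevant abscissa, convergence of $\zeta_{\widetilde G}(s)$ is equivalent to convergence of $\sum_i(\zeta_{\widetilde S_i}(s)-1)$, and likewise for $\overline G$.

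The key step is a uniform comparison. There are a constant $C$, depending only on the rank bound $\ell$, and a small shift, such that for all but finitely many $i$ and all $s$ in a range $s>\sigma_0$,
\[
  \zeta_{\widetilde S_i}(s)-1 \le C\,(\zeta_{S_i}(s)-1).
\]
If this holds for every $s>\alpha(\overline G)$, then $\sum_i(\zeta_{\widetilde S_i}(s)-1)<\infty$ whenever $\sum_i(\zeta_{S_i}(s)-1)<\infty$, giving $\alpha(\widetilde G)\le\alpha(\overline G)$. To get the comparison, I would use that for bounded Lie rank only finitely many $S_i$ are alternating or sporadic, and finitely many factors cannot affect the abscissa. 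For groups of Lie type of rank $\le\ell$ over $\mathbb F_q$, the Schur multiplier has order bounded in terms of $\ell$, apart from finitely many exceptional multipliers. So $\widetilde S_i$ has at most $|Z(\widetilde S_i)|\le c(\ell)$ central characters. The characters of $\widetilde S_i$ lying over a given central character are controlled by Deligne--Lusztig theory, i.e.\ Lusztig's Jordan decomposition for groups with connected or disconnected centre. Their number and degrees are comparable to those of $S_i$: the minimal nontrivial degree of $\widetilde S_i$ is $\ge c\,q^{\,a}$ with the same exponent $a$ as for $S_i$ (Landazuri--Seitz), and the number of characters of degree $\le D$ is bounded by a constant multiple of the corresponding count for $S_i$. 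I would extract this from the Liebeck--Shalev estimates~\cite{LS05} as used in \cref{thm:criteria-rigid-PRG}, together with the polynomial-in-$q$ description of character degrees from \cite{AKOV16}. These give, for fixed Lie type, $\zeta_{S}(s)-1 \asymp q^{\,b-a s}$ for suitable exponents, uniformly, and the same asymptotic for the covers.

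The main obstacle is making this comparison precise and uniform. One must check that the non-faithful-on-centre characters of $\widetilde S_i$ (the genuinely projective ones) do not contribute a term $q^{\,b'-a's}$ with a worse exponent pair than the ordinary characters of $S_i$. I would handle it type by type over the finitely many Lie types of rank $\le \ell$. For each type I would use the fact that the conjugacy-class count of $\widetilde S_i$ is at most $c(\ell)$ times that of $S_i$ (both $\asymp q^{\ell}$) and that the minimal degrees agree up to constants. These bound $\zeta_{\widetilde S_i}(s)-1\le c\,k(\widetilde S_i)\,\min\deg^{-s}$, and I would compare this with a lower bound $\zeta_{S_i}(s)-1\ge$ (number of characters of degree $\asymp q^{a}$ or of the relevant dominant family) times that degree to the power $-s$. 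Where this crude pairing fails to match exponents, I would fall back on the finer dominance analysis of the Dirichlet polynomials in \cite{AKOV16}. The expected outcome is that the abscissa of $\sum_i q_i^{\,b-as}$ depends only on the distribution of the $q_i$ and on type-dependent exponents shared by $S_i$ and $\widetilde S_i$. That yields equality of abscissae and hence $\alpha(G)=\alpha(\widetilde G)$.
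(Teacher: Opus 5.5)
Your reduction to a uniform comparison of $\zeta_{\widetilde S_i}(\sigma)-1$ with $\zeta_{S_i}(\sigma)-1$ is sound, but that comparison is the entire content of the theorem, and your proposal does not establish it. There is a genuine gap here.

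\textbf{The crude pairing fails.} It breaks down already for $\widetilde S=\SL_3(\mathbb{F}_q)$.
\begin{itemize}
\item The class number and the minimal non-trivial degree are both $\asymp q^2$, so your upper bound is $\asymp q^{2-2\sigma}$.
\item Only $\asymp q$ characters of $\mathsf{PSL}_3(\mathbb{F}_q)$ have degree $\asymp q^2$, so your lower bound is $\asymp q^{1-2\sigma}$.
\item The truth is $\zeta_{\SL_3(\mathbb{F}_q)}(\sigma)-1\asymp q^{1-2\sigma}+q^{2-3\sigma}$.
\end{itemize}
For $q$ running through $p^j$, $j\in\N$, these three expressions give abscissae $1$, $1/2$ and $2/3$ respectively.

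In particular, $\zeta_S-1$ is not comparable to a single term $q^{b-as}$ as you claim, since the dominant term switches at $\sigma=1$. The abscissa depends on the whole exponent set $\boldsymbol{a}(\lambda,q)$. What must be shown is that every pair $(m,n)$ contributing for $\widetilde S_i$ is also realised, uniformly up to constants, by characters of $\widetilde S_i$ that are trivial on the centre.

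\textbf{The fallback does not apply.} Appealing to ``the finer analysis of \cite{AKOV16}'' does not provide this. \cref{thm:approximation} concerns groups $\mathbf{G}^F$ with $\mathbf{G}$ a connected simple algebraic group. It therefore covers $\widetilde S_i=\mathbf{G}_i^{F_i}$ but not $S_i$, which in general is not of that form. For $\gcd(n,q-1)>1$, the group $\mathsf{PSL}_n(\mathbb{F}_q)$ is a proper subgroup of $\mathsf{PGL}_n(\mathbb{F}_q)$. Moreover, every $\mathbf{G}^F$ of type $\mathsf{A}_{n-1}$ over $\mathbb{F}_q$ has order $\lvert\SL_n(\mathbb{F}_q)\rvert$.

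\textbf{The missing idea is the one the paper uses.}
\begin{itemize}
\item First reduce, via \cref{pro:abs-of-direct-product}, to factors with a common root system $\Phi$ and $q_i>3$.
\item Then $S_i\cong\pi(\mathbf{G}_i^{F_i})=[(\mathbf{G}_i)_{\mathrm{ad}}^{F_i},(\mathbf{G}_i)_{\mathrm{ad}}^{F_i}]$. This is a perfect subgroup of index $\lvert\mathrm{Z}(\mathbf{G}_i^{F_i})\rvert\le\rk(\Phi)+1$ in the adjoint finite group $(\mathbf{G}_i)_{\mathrm{ad}}^{F_i}$.
\item \cref{thm:approximation} attaches the \emph{same} set $\boldsymbol{a}(\lambda,q_i)$ to the adjoint and to the simply connected forms. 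This is because that set depends only on $(\lambda,q)$, not on the isogeny type.
\item Clifford theory across this bounded index is packaged in \cref{pro:auxiliary-result-for-alpha}. Apply it with $G_i=(\mathbf{G}_i)_{\mathrm{ad}}^{F_i}$ and $H_i=S_i$, and again with $G_i=H_i=\mathbf{G}_i^{F_i}$. This gives $\alpha(G)=\absc\prod_i(1+\xi_{\boldsymbol{a}_i,q_i})=\alpha(\widetilde G)$.
\end{itemize}
Without this, or an equivalent Deligne--Lusztig analysis of which Lusztig series are trivial on the centre, your argument yields only the easy inequality $\alpha(G)\le\alpha(\widetilde G)$.
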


Kassabov and Nikolov~\cite{KN06} established that for every positive
real number~$\varrho$, there exists a cartesian product $G$ of finite
alternating groups such that $G$ has \PRG{} and $\alpha(G) = \varrho$.
Moreover, they proved that any such group $G$ is a profinite
completion, in the sense that there exists a finitely generated
discrete group~$\Gamma$ such that~$G$ is isomorphic to the profinite
completion $\widehat{\Gamma}$.  We generalize their result and produce
a more flexible construction for quasi-semisimple profinite groups of
prescribed representation growth.  This includes, in particular,
semisimple profinite groups whose non-abelian composition factors are
finite simple groups of Lie type.

Our approach builds on pioneering work of Garc\'ia Rodr\'iguez and
Klopsch~\cite[Chapter~6]{Gar16} in this direction; compare
with~\cref{diff-to-Garcia-Klopsch}.  Unlike Garc\'ia Rodr\'iguez and
Klopsch, we can adjust the ranks of the finite simple groups involved
to construct finitely generated semisimple profinite groups which are
profinite completions.  For an explanation of the terms `Lie type' and
`quasi-semisimple profinite group in characteristic $p$ and of Lie
type $\lambda$' that are used in the statement of the next result, we
refer to \cref{def:lie-type-basic,def:lie-type-profinite}.

\begin{theorem} \label{thm:main-result} Let $\varrho \in \R_{>0}$ be a
  positive real number.

  \smallskip

  \noindent \textup{(1)} For every Lie type $\lambda = (\Phi,\tau)$
  such that $\varrho > \rk(\Phi) / \lvert \Phi^+ \rvert$ and every
  prime $p$ there exists a semisimple profinite group $H$ in
  characteristic $p$ and of Lie type $\lambda$ that has \PRG{} of
  degree $\alpha(H) = \varrho$.

  \smallskip
    
  \noindent \textup{(2)} Let $H_m$, $m\in \N$, be a sequence of
  semisimple profinite groups in positive characteristics $p_m$ and of
  Lie types $\lambda_m$ such that
  \[
    \rk(H_m) \to \infty \qquad \text{and} \qquad \alpha(H_m) \to
    \varrho \;\; \text{from below,} \qquad \text{as $m \to \infty$.}
  \]
  Then there is a semisimple profinite group~$G$, which has growing Lie
  ranks and whose composition factors originate from those of the
  groups~$H_m$, such that $G$ and its universal covering group
  $\widetilde{G}$ satisfy
  $\alpha(G) = \alpha(\widetilde{G}) = \varrho$.
\end{theorem}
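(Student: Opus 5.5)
The plan is to treat the two parts separately. Throughout we use that the zeta function of a cartesian product is the product of the zeta functions of the factors, i.e.\ $\zeta_H(s) = \prod_{i} \zeta_{S_i}(s)$. Hence $\zeta_H(s)$ converges if and only if $\sum_i (\zeta_{S_i}(s) - 1)$ converges. The key input is the Liebeck--Shalev estimate~\cite{LS05} for a finite group $S$ of Lie type of rank $r$ over $\mathbb{F}_q$, with $N = \lvert \Phi^+ \rvert$. It says that $\zeta_S(s) - 1 \asymp q^{-Ns}$ uniformly in $q$ for $s$ in compact subsets of $(r/N, \infty)$. The minimal nontrivial degree is of order $q^{N}$, and there are $O(q^{r})$ characters of each degree of comparable size. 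So $\zeta_S(s) - 1$ is controlled by $q^{r - Ns}$ in the worst case and bounded below by $c\,q^{-Ns}$. For $s > r/N$ we in fact get $\zeta_S(s) - 1 = \Theta(q^{-Ns})$. The same holds for the universal cover $\widetilde{S}$, since it has only boundedly many extra characters per degree range.

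For part (1), fix $\lambda$ and $p$, and take $H = \prod_{k} S_\lambda(p^{f_k})^{e_k}$, where $f_k$ runs through distinct positive integers and $e_k$ are multiplicities. Then
\[
  \zeta_H(s) \text{ converges} \iff \sum\nolimits_k e_k\, p^{-f_k N s} < \infty
\]
for $s > r/N$. We choose $e_k = \lfloor p^{f_k N \varrho} / f_k^2 \rfloor$ with $f_k = k$. The resulting Dirichlet-type series has abscissa exactly $\varrho$: it converges for $s > \varrho$, and it diverges for $s < \varrho$ because the terms do not tend to zero. Since $\varrho > r/N$, the Liebeck--Shalev asymptotics are valid in a neighbourhood of $\varrho$, so $\alpha(H) = \varrho$. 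The group is countably based, and it is finitely generated by \cref{thm:criteria-rigid-PRG}, since $m_n(H)$ is polynomially bounded.

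For part (2), we build $G$ as a product of suitably chosen finite subproducts $G_m$ of the $H_m$. Write $H_m = \prod_{j} T_{m,j}$, and pick a finite set $F_m$ of factors of $H_m$ such that two conditions hold. First, all factors in $F_m$ have rank $\ge \rk(H_m)$, so growing ranks hold after passing to a subsequence with strictly increasing ranks. Second, the partial zeta function $\zeta_{G_m}(s)$, with $G_m = \prod_{F_m} T$, is close to $\zeta_{H_m}$ on a range where this matters. Concretely, we choose $F_m$ large enough that
\[
  \sum\nolimits_{n \le N_m} r_n(G_m) \ge N_m^{\alpha(H_m) - 1/m}
\]
for some $N_m$. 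We take $N_m$ increasing so fast that the factors of $G_m$ have minimal degree $> N_{m-1}$; this is possible by rank growth and Landazuri--Seitz-type bounds. This yields $\alpha(G) \ge \limsup \alpha(H_m) - 1/m \to \varrho$. For the upper bound we need, for each $s > \varrho$, that $\sum_m (\zeta_{\widetilde{G}_m}(s) - 1) < \infty$. Since $\alpha(H_m) < \varrho$, each $\zeta_{\widetilde{H}_m}(s) - 1$ is finite. We handle the universal covers using \cref{thm:unif-bounded-ranks-invariant-alpha} applied to each $H_m$, which has a single Lie type and hence bounded rank. We then truncate further so that $\zeta_{\widetilde{G}_m}(s_m) - 1 \le 2^{-m}$ for a sequence $s_m \downarrow \varrho$. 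This is achievable by discarding small factors: large-rank factors have $\zeta_{\widetilde{T}}(s) - 1$ tiny by the rank-uniform bound of Liebeck--Shalev, $\zeta_{\widetilde{T}}(s) - 1 = O(q^{-cs\,r})$. Then $\alpha(\widetilde{G}) \le \varrho$. Together with $\alpha(G) \le \alpha(\widetilde{G})$, which holds because $G$ is a quotient of $\widetilde{G}$, we get $\alpha(G) = \alpha(\widetilde{G}) = \varrho$.

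The main obstacle is this balancing act in (2). The lower-bound requirement forces $F_m$ to be large, while the upper bound at $s_m$ forces the tails to be small. I expect to resolve the tension by choosing $N_m$ and $F_m$ inductively. At stage $m$ we first fix $s_m$. We then take the finitely many factors of $H_m$ with minimal degree in $(N_{m-1}, N_m]$, noting that discarding factors of small degree does not affect the abscissa of $H_m$. Finally we use uniform convergence of $\sum (\zeta_{\widetilde{T}}(s_m) - 1)$ over the factors $T$ of $H_m$ to control the contribution at $s_m$.
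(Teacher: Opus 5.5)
\paragraph{Part (1) has a genuine gap.} It rests on a false asymptotic. It is not true that $\zeta_S(s)-1=\Theta(q^{-Ns})$ for $s>r/N$, where $r=\rk(\Phi)$ and $N=\lvert\Phi^+\rvert$. Two things go wrong.
\begin{enumerate}
\item The minimal nontrivial degree is in general not of order $q^N$. For example, $\SL_3(q)$ has a unipotent character of degree $q^2+q$, whereas $N=3$.
\item Even in type $\mathsf{A}_1$, where the minimal degree is of order $q$, there are of order $q$ characters of that size. So $\zeta_{\mathsf{PSL}_2(q)}(s)-1\asymp q^{1-s}$, not $q^{-s}$.
\end{enumerate}
The correct input is \cref{thm:approximation}, respectively \cref{thm:Approximation-simple-groups}: $\zeta_S-1\sim_C\sum_{(m,n)\in\boldsymbol{a}(\lambda,q)}q^{m-ns}$ for a finite set of pairs. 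This set contains $(r,N)$ for all large $q$; these are the roughly $q^r$ characters of degree roughly $q^N$ coming from regular semisimple classes.

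With your multiplicities $e_k\approx p^{kN\varrho}/k^2$, this term alone gives
$\sum_k e_k\bigl(\zeta_{S_\lambda(p^k)}(\sigma)-1\bigr)\ge c_\sigma\sum_k k^{-2}p^{k(N(\varrho-\sigma)+r)}$, which diverges for every $\sigma<\varrho+r/N$. So your $H$ has $\alpha(H)\ge\varrho+r/N>\varrho$. In type $\mathsf{A}_1$ you get exactly $\varrho+1$; compare \cref{exa:prod-of-SL2-with-presc-gens}. Relatedly, the hypothesis $\varrho>r/N$ is not about a ``range of validity'' of the Liebeck--Shalev estimates.

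\paragraph{The missing idea.} You need to find which monomial of the approximating Dirichlet polynomial dominates near $\sigma=\varrho$, and tune the multiplicities to it.
\begin{itemize}
\item Among the pairs lying in $\boldsymbol{a}(\lambda,p^k)$ for infinitely many $k$, take $(m_0,n_0)$ maximising $m-n\varrho$, breaking ties by smaller $n$.
\item Use multiplicities $p^{f(k)}$ with $f(k)\approx k(n_0\varrho-m_0)$. Then the $(m_0,n_0)$-terms behave like $p^{kn_0(\varrho-\sigma)}$, and all other terms are dominated for $\sigma$ slightly above $\varrho$.
\item The hypothesis $\varrho>r/N\ge m/n$ for all pairs is exactly what makes $f(k)\ge 0$.
\end{itemize}
This is \cref{lem:f(j)-choice-for-rho}, combined with \cref{pro:auxiliary-result-for-alpha} in the proof of \cref{thm:for-every-a-we-find-G-that-has-PRG}. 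Note that $(m_0,n_0)$ is never $(0,N)$, since $(r,N)$ always beats it.

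\paragraph{Part (2) is sound.} It takes a somewhat different route from \cref{thm:main-constr-for-given-rho}. The paper controls $R_n(\widetilde{L}_m)$ on windows $(\underline{n}(m-1),\underline{n}(m)]$. You instead bound $\alpha(\widetilde{G})$ directly by arranging $\zeta_{\widetilde{G}_m}(s_m)-1\le 2^{-m}$ with $s_m\downarrow\varrho$, and bound $\alpha(G)$ from below through the quotient $G\to G_m$. This makes the ``no new small representations'' condition superfluous.

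Two small corrections:
\begin{itemize}
\item The truncation should be justified by convergence of the tail of $\prod_j\zeta_{\widetilde{S}_{m,j}}(s_m)$. This holds because $\alpha(\widetilde{H}_m)=\alpha(H_m)<\varrho$ by \cref{thm:unif-bounded-ranks-invariant-alpha}. A rank-uniform bound does not help, since all factors of a fixed $H_m$ have the same rank.
\item The truncation should be done before $N_m$ and $F_m$ are chosen. This is harmless, because discarding finitely many factors does not change $\alpha(H_m)$.
\end{itemize}
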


We record two corollaries.  The first corollary follows from
\cref{thm:main-result}, upon applying a criterion of Kassabov and
Nikolov; see \cref{thm:profinite-completion} and
\cref{pro:factors-through-compl}.  The second corollary is a direct
consequence of \cref{thm:main-result} and provides further evidence
for an affirmative answer to the Invariant Degree Problem formulated
above.

\begin{corollary} \label{cor:prof-compl} For every
  $\varrho \in \R_{>0}$ there exist finitely generated discrete groups
  $\Gamma$ such that $\Gamma$ has \PRG{} with
  $\alpha(\Gamma) = \varrho$ and $\widehat{\Gamma}$ is a semisimple
  profinite group, whose composition factors are finite simple groups
  of Lie type; moreover, every finite dimensional complex
  representation of $\Gamma$ factors through the profinite
  completion~$\Gamma \hookrightarrow \widehat{\Gamma}$.
\end{corollary}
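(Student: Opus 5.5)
We derive \cref{cor:prof-compl} from \cref{thm:main-result}(2) together with the criterion of Kassabov and Nikolov, as referenced (\cref{thm:profinite-completion} and \cref{pro:factors-through-compl}).

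\textbf{Constructing the profinite group.} Fix $\varrho \in \R_{>0}$. First we produce a sequence $H_m$ as required in \cref{thm:main-result}(2). Choose Lie types $\lambda_m = (\Phi_m,\tau_m)$ with $\rk(\Phi_m) \to \infty$, for instance $\Phi_m = \mathsf{A}_{m}$ untwisted. Then $\rk(\Phi_m)/\lvert \Phi_m^+\rvert = 2/(m+1) \to 0$. Also choose reals $\varrho_m < \varrho$ with $\varrho_m \to \varrho$ and $\varrho_m > \rk(\Phi_m)/\lvert\Phi_m^+\rvert$ for all large~$m$. By \cref{thm:main-result}(1) there are semisimple profinite groups $H_m$, in some characteristic $p_m$ and of Lie type $\lambda_m$, with $\alpha(H_m) = \varrho_m$. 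Since they have Lie type $\lambda_m$, their ranks tend to infinity. \Cref{thm:main-result}(2) then yields a semisimple profinite group $G$ with growing Lie ranks, composition factors of Lie type, and $\alpha(G) = \alpha(\widetilde G) = \varrho$. In particular $G$ has \PRG{}, so $G$ is finitely generated by \cref{thm:criteria-rigid-PRG}(2).

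\textbf{Realising $G$ as a profinite completion.} Next we apply the Kassabov--Nikolov criterion (\cref{thm:profinite-completion}). As I recall it, a cartesian product of finite simple groups (of Lie type, or alternating) that is finitely generated and in which each isomorphism type occurs only boundedly often (automatic here, since the ranks grow) is the profinite completion $\widehat{\Gamma}$ of a finitely generated group~$\Gamma$. The group $\Gamma$ is built from generators of the factors via a Kazhdan-type or bounded-generation argument, and its construction also ensures that $\Gamma$ is residually finite.

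This step is where I expect the main obstacle. We must verify that the hypotheses of the criterion are met by the specific $G$ produced in \cref{thm:main-result}(2), and in particular the growth condition on the ranks. The phrase ``we can adjust the ranks'' suggests that the construction in \cref{thm:main-result}(2) has enough freedom to satisfy it. If the criterion needs, say, ranks that grow fast enough, or a bound on the multiplicities of each isomorphism type, I would first pass to a suitable subsequence of the $H_m$. This only discards finitely many factors of each rank and does not affect the limiting value~$\varrho$.

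\textbf{Transferring the zeta function to $\Gamma$.} Finally, \cref{pro:factors-through-compl} gives that every finite-dimensional complex representation of $\Gamma$ factors through $\Gamma \hookrightarrow \widehat{\Gamma}$. The intended mechanism is that any such representation has finite image for groups of this type, for example because $\Gamma$ has property (T)-like rigidity plus residual finiteness. Granting this, irreducible representations of $\Gamma$ correspond bijectively, dimension by dimension, to continuous irreducible representations of $\widehat\Gamma \cong G$. Hence $\zeta_\Gamma = \zeta_G$ and $\alpha(\Gamma) = \varrho$.
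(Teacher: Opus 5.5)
This is the paper's route. \cref{thm:main-result} gives a semisimple $G$ whose composition factors are of Lie type, with growing Lie ranks and $\alpha(G)=\varrho$; $G$ is finitely generated because it has \PRG{}. \cref{thm:profinite-completion} then realises $G$ as $\widehat{\Gamma}$, and \cref{pro:factors-through-compl} yields $\zeta_\Gamma=\zeta_G$.

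Your hedging at the Kassabov--Nikolov step is unnecessary. The criterion as stated needs only finite generation and growing Lie ranks. Your recalled ``bounded multiplicities'' condition would not be automatic anyway, since growing ranks only give finite multiplicities.

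Also, the finite-image mechanism behind \cref{pro:factors-through-compl} is not property~(T). Rather, an infinite finitely generated linear image would be virtually residually-$p$. It would therefore force an infinite pro-$p$ image of a perfect open subgroup of the universal cover of $\widehat{\Gamma}$, which is impossible.
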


\begin{corollary} \label{cor:growing-rank-invariant-alpha} There are semisimple profinite groups $G$ that have growing Lie ranks and are
  such that $\alpha(G) = \alpha(\widetilde{G})$.
\end{corollary}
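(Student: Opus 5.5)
The corollary is an existence statement, so I will read it off directly from \cref{thm:main-result}(2). The one real task is to produce an input sequence $H_m$ satisfying the hypotheses of part (2), and for this I will use part (1).

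I fix some $\varrho \in \R_{>0}$ and a prime $p$, and choose the Lie types $\lambda_m = (\Phi_m,\tau)$ with $\Phi_m = A_{\ell_m}$ untwisted and $\ell_m \to \infty$. Since
\[
  \rk(A_\ell)/\lvert A_\ell^+\rvert = \ell/\bigl(\ell(\ell+1)/2\bigr) = 2/(\ell+1) \to 0,
\]
I pick a strictly increasing sequence of reals $\varrho_m < \varrho$ with $\varrho_m \to \varrho$. I then take $\ell_m$ large enough that $\varrho_m > 2/(\ell_m+1)$. With these choices \cref{thm:main-result}(1) provides, for each $m$, a semisimple profinite group $H_m$ in characteristic $p$ and of Lie type $\lambda_m$ with $\alpha(H_m) = \varrho_m$.

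Next I check the rank condition $\rk(H_m) \to \infty$. I expect this to be the only point needing care, since it depends on how \cref{def:lie-type-profinite} defines the rank of such a group. Presumably the rank is that of the type $\lambda_m$, namely $\ell_m$, up to finitely many exceptional isomorphisms, which do not matter as $\ell_m \to \infty$. If instead the rank is taken as the minimum of the Lie ranks of the factors, it still equals $\ell_m$ for large $\ell_m$: a group $\mathsf{PSL}_{\ell+1}(q)$ coincides with an alternating group or a lower-rank group only for small parameters.

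Since $\alpha(H_m) = \varrho_m$ increases to $\varrho$, \cref{thm:main-result}(2) applies. It yields a semisimple profinite group $G$ with growing Lie ranks and $\alpha(G) = \alpha(\widetilde{G}) = \varrho$, which proves the corollary. In fact the argument gives such groups $G$ for every prescribed value $\varrho > 0$.
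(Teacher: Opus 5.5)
Your proposal is correct and follows the paper, which simply states that the corollary follows directly from \cref{thm:main-result}. Your use of part (1) with untwisted type $\mathsf{A}_{\ell_m}$ (taking the $\varrho_m$ positive) supplies exactly the input sequence that part (2) requires; no hedging is needed on the rank, since \cref{def:lie-type-profinite} defines $\rk(H_m) = \rk(\Phi_m) = \ell_m$.
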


In view of the results of Kassabov and Nikolov for cartesian products
of alternating groups and \cref{thm:main-result} above, it is natural
to ask whether the spectrum of possible representation growth degrees
$\alpha(G)$ decreases, if one puts further structural restrictions on
the quasi-semisimple profinite groups~$G$.  Our final result shows
that imposing a bound on the minimal number of generators of $G$ does
not diminish the degree spectrum.

\begin{theorem} \label{thm:growing-ranks-2-gens} Let $G$ be a
  quasi-semisimple profinite group that has growing Lie ranks and
  \PRG{}.  Then there is an open subgroup $H \le_\mathrm{o} G$ such
  that every open normal subgroup $N \trianglelefteq_\mathrm{o} G$
  with $N \subseteq H$ is $2$-generated.

  However, for every $d \in \N$ there are $d$-generated
  quasi-semisimple profinite groups~$G$ that have uniformly bounded
  Lie ranks and \PRG{} such that every open subgroup
  $H \le_\mathrm{o} G$ requires at least $d$ generators.
\end{theorem}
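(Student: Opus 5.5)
For the first part of the statement, I will reduce to the universal covering group and then use known generation results for finite simple groups. Write $G/\mathrm{Z}(G) \cong \prod_{i\in\N} S_i$. Since $G$ has \PRG{}, \cref{thm:criteria-rigid-PRG}(3) shows that $m_n(G)$ is polynomially bounded. In particular, each isomorphism type of $S_i$ occurs only finitely often, and only finitely many $S_i$ of any given Lie rank $\le \ell$ occur, which is consistent with growing Lie ranks.

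The key point is this: although the number of factors of size at most $N$ grows polynomially in the minimal degree, the multiplicity of each fixed simple group stays bounded once the Lie rank is large. More precisely, I will try to show that there is $\ell_0$ such that every isomorphism type $S$ with $\rk_{\mathrm{Lie}}(S)\ge \ell_0$ occurs among the $S_i$ with multiplicity less than a fixed bound $c(S)$. Here $c(S)$ is chosen so that $S^{c}$ is $2$-generated. By Wiegold-type estimates, the $2$-generation of $S^k$ holds as long as $k$ is at most roughly the number of $\Aut(S)$-orbits on generating pairs, which is about $|S|/|\mathrm{Out}(S)|$.

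The multiplicity $k_S$ of $S$ is at most $m_n(G)$ for $n$ equal to the minimal nontrivial degree of $S$. That degree is polynomially bounded by $n^{C}$, while $|S|$ grows like $n^{c\,\rk}$ as the rank grows (Landazuri–Seitz and the corresponding bounds for alternating groups). Hence for large rank $k_S \le |S|^{1/2}$, which is far below the Wiegold threshold.

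Now take $H$ to be the preimage in $G$ of $\prod_{i : \rk(S_i)\ge \ell_0} S_i$; by growing Lie ranks this is open. Any open normal $N\subseteq H$ has image $\prod_{i\in J} S_i$ in $G/\mathrm{Z}(G)$ for some cofinite $J$ inside those indices. Since $N$ is normal in the perfect group $G$ and $G$ is a central extension, $[N,N]$ maps onto this product. The group $\overline{[N,N]}$ is a quotient of $\prod_{i\in J}\widetilde S_i$, so by Gaschütz-type lifting it suffices to show that $\prod_{i\in J}\widetilde S_i$ is $2$-generated. That reduces, via the Frattini argument (the centre lies in the Frattini subgroup of a perfect central extension), to $2$-generation of $\prod_{i\in J}S_i$. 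For this, group the factors by isomorphism type and apply the criterion for products of pairwise non-isomorphic blocks $S^{k_S}$: the product is $2$-generated if and only if each block is.

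The remaining issue is that $N$ may have a centre not lying in $[N,N]$. I would handle this by choosing $H$ inside the closure of the commutator subgroup, or by noting that $N\cap \mathrm{Z}(G)$ is controlled. This step needs care, and I would first try to show $N=\overline{[N,N]}$ using that $N$ contains a finite-index part of the perfect group $\widetilde{\ }$-image.

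For the second part, take $S=\mathsf{PSL}(2,p)$ for primes $p$, and for each $p$ use the block $S_p^{k_p}$ with $k_p$ chosen as the largest multiplicity for which $S_p^{k_p}$ is $d$-generated but not $(d-1)$-generated. This $k_p$ is roughly $|S_p|^{d-2}$, and so polynomial in $p$. The minimal degree of $S_p$ is about $p/2$, so $m_n$ is bounded by $\sum_{p\lesssim 2n} p^{3(d-2)}$, which is polynomial in $n$, and \PRG{} follows from \cref{thm:criteria-rigid-PRG}. Since the Lie ranks are all $1$, $G=\prod_p S_p^{k_p}$ is $d$-generated. Every open subgroup contains $\prod_{p>p_0}S_p^{k_p}$ as a quotient-compatible piece, and that piece needs $d$ generators; the main technical point is passing from this piece to the open subgroup itself, which I would handle via the surjection of an open subgroup onto a cofinite subproduct. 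The main obstacle overall is the quantitative multiplicity-versus-Wiegold comparison in the first part.
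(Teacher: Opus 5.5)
\paragraph{The gap.} The step you flag at the end is a genuine gap, and no choice of $H$ closes it. Your proposed remedy, proving $N=\overline{[N,N]}$, is false in general. In fact the first assertion itself fails for quasi-semisimple groups with non-trivial centre.

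Your reduction correctly shows that $M=\overline{[N,N]}$ is a perfect central extension of the cofinite subproduct $N\mathrm{Z}(G)/\mathrm{Z}(G)$, so $d(M)\le 2$. But $N=M\,(N\cap\mathrm{Z}(G))$, and the abelian quotient $N/M\cong (N\cap\mathrm{Z}(G))/(M\cap\mathrm{Z}(G))$ can have arbitrarily large rank.

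Concretely, let $G=\prod_{n\ge 8}2.\!\Alt(n)$. It is quasi-semisimple, has growing Lie ranks, and has \PRG{} since $m_n(G)\le n$. Every open $H\le_\mathrm{o}G$ contains $\prod_{n\ge k}2.\!\Alt(n)$ for some $k$. Put $Z_n=\mathrm{Z}(2.\!\Alt(n))\cong C_2$ and
\[
  N = Z_k\times Z_{k+1}\times\cdots\times Z_{k+r-1}\times\prod\nolimits_{n\ge k+r}2.\!\Alt(n).
\]
Then $N$ is open, normal in $G$, contained in $H$, and maps onto $C_2^{\,r}$, so $d(N)\ge r$.

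The paper's proof has the same hole, hidden in the sentence that there is no harm in working modulo $\mathrm{Z}(G)$. \cref{pro:center-fratt} applies only to perfect groups. It gives $d(G)=d(G/\mathrm{Z}(G))$ and $d(\overline{[N,N]})=d(N\mathrm{Z}(G)/\mathrm{Z}(G))$, but nothing about non-perfect $N$. What your argument and the paper's actually prove is the first assertion for semisimple $G$, or for quasi-semisimple $G$ with $N$ replaced by $\overline{[N,N]}$, i.e.\ for perfect open normal subgroups.

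\paragraph{What works, and how it compares.} For that corrected statement your argument is sound and more direct than the paper's.

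You bound every multiplicity outright:
\[
  \nu(S)\le m_{d_S}(G)\le d_S^{\,b}\le\lvert S\rvert^{4b/\rk_{\mathrm{Lie}}(S)} .
\]
This needs a non-principal character of degree $d_S\le\lvert S\rvert^{4/\rk_{\mathrm{Lie}}(S)}$. That is an \emph{upper} bound on the minimal degree, obtained from natural permutation actions as in the paper. Landazuri--Seitz is the wrong reference, since it bounds $d_S$ from below.

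The paper instead argues by contradiction. It separates alternating from classical groups and, in the classical case, counts pullbacks of $\rho^{\boxtimes m}$ along the $\binom{k}{m}$ projections $S^k\to S^m$ to force $\alpha\ge m/4$ for every $m$.

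Your target $\nu(S)\le\lvert S\rvert^{1/2}$ is also the more robust one. Hall's threshold $\lvert\mathrm{GP}(S)\rvert/\lvert\Aut(S)\rvert$ is roughly $\lvert S\rvert/\lvert\mathrm{Out}(S)\rvert$. Here $\lvert\mathrm{Out}(S)\rvert$ is unbounded, and already equals $2$ for $\Alt(n)$ with $n\ge 7$. So the paper's target $\nu(S)\le\tfrac12\lvert S\rvert$, which rests on $\lvert\Aut(S)\rvert/\lvert S\rvert\to1$, needs adjusting.

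Your second part is essentially the paper's example. The largest $k$ with $d(S_p^{\,k})\le d$ is of order $\lvert S_p\rvert^{d-1}/\lvert\mathrm{Out}(S_p)\rvert$ rather than $\lvert S_p\rvert^{d-2}$, but this is harmless. Your argument that an open subgroup contains, and hence projects onto, a full block $S_p^{\,k_p}$ for large $p$ is correct.
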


A key ingredient of our work is \cref{thm:approximation}, which was
proved by Avni, Klopsch, Onn and Voll in~\cite{AKOV16}.  In an
appendix we prove \cref{thm:Approximation-simple-groups}, a variant of
their result that applies to finite simple groups of Lie type.  The
proof requires additional considerations; it also provides the
opportunity to supply some additional details and to streamline parts
of the argument given in~\cite{AKOV16}.

%%%

\medskip

\noindent \textit{Organisation.}  \cref{sec:finite} contains some
background material about finite groups of Lie type and quasi-simple
finite groups.  \cref{thm:criteria-rigid-PRG} and the main part of
\cref{thm:growing-ranks-2-gens} are established in \cref{sec:PRG}.
The remaining part of \cref{thm:growing-ranks-2-gens} and
\cref{thm:unif-bounded-ranks-invariant-alpha} are proved in
\cref{sec:Aproox}.  \cref{thm:main-result} and its
\cref{cor:prof-compl,cor:growing-rank-invariant-alpha} follow from
\cref{thm:for-every-a-we-find-G-that-has-PRG,thm:main-constr-for-given-rho}
which are established in \cref{sec:proofs}.  In
\cref{sec:AKOV-revisited} we obtain
\cref{thm:Approximation-simple-groups}, a variant of one of our key
tools, namely \cref{thm:approximation}.

\medskip

\noindent \textbf{Acknowledgement.} Some of the results in this
article form part of the second author's doctoral thesis~\cite{Pic24}. The second author wishes to thank Lucia Morotti and Lucas Ruhstorfer for insightful conversations.

%%%%%

\section{Finite groups of Lie type} \label{sec:finite}

For the background material collected in this section, we use
\cite{MT11} as a main reference; other useful accounts, with a view
toward Deligne--Lusztig theory, include \cite{DM20,GM20}.  Let~$\mathbb{F}$ be an algebraically closed field of positive
characteristic~$p$.  For any power $q$ of~$p$, let
${\mathbb{F}_q\subseteq \mathbb{F}}$ denote the finite subfield of~$q$
elements.  Let~$\mathbf{G}$ be a connected reductive
$\mathbb{F}$-algebraic group, and let
$F \colon \mathbf{G} \to \mathbf{G}$ be a Frobenius endomorphism
providing an $\mathbb{F}_q$-structure on~$\mathbf{G}$.  The fixed
point group $\mathbf{G}^F= \{g \in \mathbf{G}\mid F(g)=g\}$ is called
a \emph{finite group of Lie type}, and we refer to the rank of the
algebraic group~$\mathbf{G}$ as the \emph{Lie rank} of $\mathbf{G}^F$.
In some cases, there are multiple ways to realize a group as
$\mathbf{G}^F$ and strictly speaking the Lie rank depends on the
particular realization; compare with~\cite[Theorem~37]{St68}.

\begin{remark} \label{rem:exclude-suzuki-ree} By restricting to
  Frobenius endomorphisms (instead of considering more general
  Steinberg maps), we are excluding certain rank-$2$ groups known as
  Suzuki and Ree groups.  This is permissible as our main interest is
  in families of groups of growing Lie rank. For the same reason, it is often enough to focus on classical Lie types.
\end{remark}

\begin{remark}
  We take the opportunity to correct a related statement in
  \cite[Remark~3.6]{AKOV16}.  Partly due to a misprint in the first
  edition of~\cite{Car93}, the list of degrees of polynomials giving
  the unipotent character degrees of ${}^2 F_4(q^2)$ and $F_4(q)$ were
  not correctly reported in \cite[Remark~3.6]{AKOV16}.  In fact, the
  degree set for the twisted group ${}^2 F_4(q^2)$, viz.\
  $\{0,11,14,20,22,23,24\}$, is a subset of the degree set for the
  split group $F_4(q)$, viz.\ $\{0,11,14,15,20,21,22,23,24\}$, in line
  with other groups.
\end{remark}

A subgroup~$\mathbf{H} \le \mathbf{G}$ is said to be \emph{$F$-stable}
if $F(h)\in \mathbf{H}$ for all $h\in \mathbf{H}$.  Let $\mathbf{T}$
be an $F$-stable maximal torus in~$\mathbf{G}$.  Let $\Phi$ and
$\Phi^{\vee}$ denote the root and coroot system with respect to
$\mathbf{T}$.  As $\mathbf{T}$ is $F$-stable, the Frobenius
endomorphism acts naturally on the character group $X = X(\mathbf{T})$
and the cocharacter group $Y = Y(\mathbf{T})$.  Consider the euclidean
vector space~$X_{\R}= X\otimes_{\Z} \R$.  By \cite[Proposition
22.2]{MT11}, there exists an automorphism $\phi \in \Aut(X_{\R})$ of
finite order~$\delta$ and a $p$-power $q = q_F$ such that~$F$ acts as
$q\phi$ on~$X_{\R}$ and~$F^{\delta}|_X = q^{\delta} \id_X$.  In
particular, $F$ induces a graph automorphism, also denoted by~$\phi$,
on the Dynkin diagram $\Gamma_\Phi$ associated to $\Phi$.  Non-trivial
graph automorphisms of connected Dynkin diagrams are known to occur in
types $\mathsf{A}_\ell$ for $\ell \ge 2$, $\mathsf{D}_\ell$ for $\ell \ge 4$ and $\mathsf{E}_6$; see
\cite[Table~11.1]{MT11}.

%%%
\subsection{Finite simple groups of Lie type and their covering
  groups} \label{sec:classification}

Suppose that the algebraic group $\mathbf{G}$ is simple.  In this
case, the finite groups of Lie type arising as~$\mathbf{G}^F$ can be
classified in terms of the irreducible root system $\Phi$, the
parameters $q$ and~$\delta$ discussed above and also the isogeny type;
see \cite[Section~22.2]{MT11}.  For each $\Phi$, there are finitely
many different isogeny types of simple groups $\mathbf{G}$ with root
system~$\Phi$.  By \cite[Proposition 22.7]{MT11}, the Frobenius
endomorphisms of such groups $\mathbf{G}$ are induced by Frobenius
endomorphisms of the simply connected group $\mathbf{G}_\mathrm{sc}$.
In place of the parameter $\delta$ one may use the automorphism
$\tau = \phi |_\Phi$ of $\Phi$ stabilising a choice of positive roots
$\Phi^+$; compare \cite[Section~3.1]{AKOV16} and the references
therein.

Suppose additionally that $\mathbf{G}$ is of simply connected type.
Then the finite groups of Lie type arising as $\mathbf{G}^F$ are
parametrised by $\Phi$, $q$ and $\delta$ (or $\tau$).  Moreover, the
groups~$\mathbf{G}^F$ are quasi-simple, except in a few cases
described next; see~\cite[Theorem~24.17]{MT11}.

\begin{theorem}[Tits]
  \label{thm:tits}
  Let $\mathbf{G}$ be simply connected simple with Frobenius
  endomorphism~$F$, as above. Then, unless $\mathbf{G}^F$ is
  isomorphic to one of
  \[
    \SL_2(2), \quad \SL_2(3), \quad \mathsf{SU}_3(2),
    \quad \mathsf{Sp}_4(2), \quad \mathsf{G}_2(2),
  \]
  the group $\mathbf{G}^F$ is a finite quasi-simple group.  
\end{theorem}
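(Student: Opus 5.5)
The statement is Tits' theorem, which the paper cites from \cite[Theorem~24.17]{MT11}; here is the standard route to it. We must show two things: that $\mathbf{G}^F$ is perfect, and that $\mathbf{G}^F/\mathrm{Z}(\mathbf{G}^F)$ is simple.

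\textbf{Perfectness.} Since $\mathbf{G}$ is simply connected, Steinberg's theorem shows that $\mathbf{G}^F$ is generated by its $p$-elements. Concretely, it is generated by the root subgroups $U_\alpha^F$ attached to the $F$-orbits of roots; in the twisted case one uses the relative root system coming from the $\phi$-orbits on $\Phi$.

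\begin{enumerate}
\item[(a)] Fix an $F$-stable maximal torus $\mathbf{T}$ inside an $F$-stable Borel subgroup. This gives a $BN$-pair in $\mathbf{G}^F$, with $B = \mathbf{B}^F = U \rtimes T$ and $T = \mathbf{T}^F$.
\item[(b)] Use the torus action: $t\in T$ acts on $U_\alpha^F$ via the character $\alpha$. If some $t$ acts nontrivially, with $\alpha(t)\neq 1$, then $[t,u]$ ranges over enough of $U_\alpha^F$ to place it in $[\mathbf{G}^F,\mathbf{G}^F]$.
\item[(c)] Where the torus action is degenerate, use the Chevalley commutator relations, which express root elements as commutators of other root elements.
\end{enumerate}

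This should make every root subgroup lie in the derived subgroup for all $q$ outside a short list of small pairs $(\Phi,q)$. Those pairs are precisely the candidates $\SL_2(2)$, $\SL_2(3)$, $\SU_3(2)$, $\Sp_4(2)$, $\mathsf{G}_2(2)$, together with some small cases such as $\Sp_4(3)$ or $\mathsf{G}_2(3)$ that turn out to be fine. For these one checks directly. The five exceptions are solvable or have a nontrivial abelian quotient: $\Sp_4(2)\cong \mathrm{Sym}(6)$ and $\mathsf{G}_2(2)\cong \mathrm{Aut}(\mathsf{PSU}_3(3))$.

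\textbf{Simplicity modulo the centre.} Apply Tits' simplicity criterion for groups with a split $BN$-pair. Suppose a group $\Gamma$ satisfies:
\begin{enumerate}
\item[(i)] $\Gamma$ is perfect;
\item[(ii)] $B=U\rtimes T$ with $U$ normal in $B$ and nilpotent (here a $p$-group);
\item[(iii)] $\Gamma$ is generated by the conjugates of $U$;
\item[(iv)] the Weyl group $W$ is irreducible, i.e.\ the Coxeter diagram is connected.
\end{enumerate}
Then every normal subgroup of $\Gamma$ lies in $\bigcap_g gBg^{-1}$ or equals $\Gamma$. Conditions (ii) and (iii) follow from Steinberg's generation result and the structure of $B$. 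Condition (iv) holds because $\Phi$ is irreducible; in the twisted case the relative Weyl group $W^\phi$ is again an irreducible Coxeter group. Finally, $\bigcap_g gBg^{-1}$ equals $\mathrm{Z}(\mathbf{G}^F)$, since it is a normal subgroup of $B$ meeting $U$ trivially, hence contained in $T$ and acting trivially on all root subgroups.

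\textbf{Main obstacle.} The main obstacle is the perfectness step for small $q$. The torus argument fails when $|T|$ is tiny, for instance $q=2$ where $T$ can be trivial, so the commutator relations must be examined case by case for each small rank and twisted form. The difficulty is to pin down that exactly the listed five groups fail. I would first handle rank at least $3$ uniformly via the commutator relations in rank-2 subsystems of type $\mathsf{A}_2$, which are perfect for all $q$. This reduces the problem to rank $\le 2$ groups over $\mathbb{F}_2$ and $\mathbb{F}_3$, which can be checked by direct inspection.
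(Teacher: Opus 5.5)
Your outline is correct and is the standard argument behind the result, which the paper does not prove but cites from \cite[Theorem~24.17]{MT11}: Steinberg's generation theorem, then perfectness via the torus action for $q\ge 4$ and the Chevalley commutator relations plus a finite check over $\mathbb{F}_2$ and $\mathbb{F}_3$, then Tits' simplicity criterion for the split $BN$-pair, where $\bigcap_g gBg^{-1}\subseteq B\cap n_0Bn_0^{-1}=T$ (for $n_0$ representing the longest element of $W$) is normal, meets every conjugate of $U$ trivially, hence commutes with all of them and is central. One repair is needed in your reduction step: in (relative) rank at least $3$ not every root lies in an $\mathsf{A}_2$ subsystem (long roots of $\mathsf{C}_n$, short roots of $\mathsf{B}_n$, and their analogues in the relative root systems of ${}^2\mathsf{A}_n$, ${}^2\mathsf{D}_n$), so for these you also need a $\mathsf{B}_2$-type relation such as $[x_{e_1-e_2}(t),x_{2e_2}(u)]=x_{e_1+e_2}(\pm tu)\,x_{2e_1}(\pm t^2u)$, discarding the factor already known to lie in the derived subgroup, and ``rank'' must mean relative rank, so that groups like $\SU_4(2)$, $\SU_5(2)$ and ${}^3\mathsf{D}_4(2)$ land in your finite check.
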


We refer to the finite simple groups
$\mathbf{G}^F/ \mathrm{Z}(\mathbf{G}^F)$ arising in this way as
\emph{finite simple groups of Lie type}, tacitly ignoring the Suzuki
and Ree groups.  Where necessary, we deal with the Suzuki and Ree
groups separately; compare with \cref{rem:exclude-suzuki-ree}.

\begin{remark}\label{rem:covering-fin-exceptions}
  In the setting of \cref{thm:tits} and apart from finitely many fully
  documented exceptions, the finite group $\mathbf{G}^F$ is a
  universal covering group of the simple group
  $\mathbf{G}^F / \mathrm{Z}(\mathbf{G}^F)$ and
  $\mathrm{Z}(\mathbf{G}^F)$ is the Schur multiplier
  $M(\mathbf{G}^F / \mathrm{Z}(\mathbf{G}^F))$; see
  \cite[Remark~24.19]{MT11} and \cite[Tables~6.1.2
  and~6.1.3]{GLS97}.
  % \footnote{DELETE? -- For a simply connected simple
  %   algebraic group $\mathbf{G}$ the center $\mathrm{Z}(\mathbf{G}^F)$
  %   is a cyclic group, except for the case of root systems $D_n$ with
  %   $n \ge 4$ even and $\delta$ equal to $1$, where we have a product
  %   of two cyclic groups both of order $(2,q-1)$; see
  %   \cite[Corollary~24.13]{MT11}.}
\end{remark}

We also want to make use of the view taken in~\cite{AKOV16}, which
arises naturally from a global perspective taken in the context of
arithmetic groups.

\begin{definition} \label{def:lie-type-basic} We use the term
  \emph{Lie type} to refer to a pair $(\Phi,\tau)$, where $\Phi$ is a
  root system with an implicit choice of positive roots $\Phi^+$ and
  $\tau$ is an automorphism of $\Phi$ preserving~$\Phi^+$.  We say
  that a finite group of Lie type $\mathbf{G}^F$ has Lie type
  $\lambda = (\Phi_\lambda,\tau_\lambda)$ if the connected reductive
  algebraic group $\mathbf{G}$ has root system~$\Phi_\lambda$ and the
  action of the Frobenius endomorphism $F$ induces the same action
  as~$\tau_\lambda$ on the root system $\Phi$; compare with
  \cite[Chapter~4]{DM20} and \cite[Chapter~15]{Spr98}.

  Let $\mathbf{G}$ be a connected simply connected simple algebraic
  group in positive characteristic, equipped with a Frobenius
  endomorphism $F$ that defines an $\mathbb{F}_q$-structure on
  $\mathbf{G}$.  Put $\lambda = (\Phi,\tau)$, where $\Phi$ is the root
  system associated to~$\mathbf{G}$ and $\tau$ is the automorphism
  induced by $F$ on the root system~$\Phi$.  In this situation we
  write
  \[
    L_\lambda(q) = \mathbf{G}^F \qquad \text{and} \qquad S_\lambda(q)
    = L_\lambda(q) / \mathrm{Z}(L_\lambda(q)), 
  \]
  and we observe that, apart from finitely many exceptions,
  $L_\lambda(q)$ is a quasi-simple group of Lie type~$\lambda$ with
  simple part~$S_\lambda(q)$ and, moreover, $L_\lambda(q)$ is the
  universal covering group of~$S_\lambda(q )$; see \cref{thm:tits} and
  \cref{rem:covering-fin-exceptions}.
\end{definition}

%%%
\subsection{Generating quasi-simple groups}

The minimal number of generators of a group~$G$ is denoted by~$d(G)$.

\begin{proposition}
  \label{pro:center-fratt}
  Let $G$ be a finite perfect group.  Then the centre $\mathrm{Z}(G)$
  is contained in the Frattini subgroup~$\mathrm{Frat}(G)$.
\end{proposition}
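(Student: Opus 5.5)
We argue by contradiction via a maximal subgroup avoiding the centre. Let $G$ be a finite perfect group and $Z = \mathrm{Z}(G)$. Recall that $\mathrm{Frat}(G)$ is the intersection of all maximal subgroups of~$G$. It therefore suffices to show that every maximal subgroup $M$ of $G$ contains~$Z$. (If $G$ is trivial there is nothing to prove, and every nontrivial finite group has maximal subgroups.)

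So let $M$ be a maximal subgroup and suppose that $Z \not\subseteq M$. Because $Z$ is central, $MZ$ is a subgroup of $G$, and it properly contains~$M$. Maximality of $M$ then forces $MZ = G$.

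Next we use centrality of $Z$ to compute the commutator subgroup:
\[
  [G,G] = [MZ,MZ] = [M,M],
\]
since $[mz, m'z'] = [m,m']$ for $m,m' \in M$ and $z,z' \in Z$. Perfectness of $G$ now gives
\[
  G = [G,G] = [M,M] \subseteq M,
\]
which contradicts $M$ being a proper subgroup. Hence $Z \subseteq M$ for every maximal subgroup~$M$, and so $Z \subseteq \mathrm{Frat}(G)$.

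The only step that needs any care is the identity $[MZ,MZ]=[M,M]$. It follows directly from the commutator expansion together with the fact that central elements commute with everything, so I do not expect a real obstacle anywhere in the argument.
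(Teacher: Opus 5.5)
Your proof is correct and follows essentially the same argument as the paper: a maximal subgroup $M$ missing $\mathrm{Z}(G)$ would give $G = M\mathrm{Z}(G)$, hence $G = [G,G] = [M,M] \subseteq M$, a contradiction. Your explicit justification of $[M\mathrm{Z}(G), M\mathrm{Z}(G)] = [M,M]$ via $[mz,m'z'] = [m,m']$ just spells out a step the paper leaves implicit.
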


\begin{proof}
  For a contradiction, suppose that $\mathrm{Z}(G)$ is not contained
  in~$\mathrm{Frat}(G)$.  Then there is a maximal
  subgroup~$M \le_\mathrm{max} G$ such that
  $\mathrm{Z}(G) \not\subseteq M$, and thus~$G = M \mathrm{Z}(G)$.
  This implies
  $G = [G,G] = [M\mathrm{Z}(G),M\mathrm{Z}(G)] = [M,M] \subseteq M$, a
  contradiction.
\end{proof}

The classification of finite simple groups (CFSG) implies that every
finite simple group is $2$-generated.  Using the previous proposition
we deduce the following.

\begin{corollary}
  \label{cor:quasi-simple-2-gen}
  Every quasi-simple finite group is $2$-generated.
\end{corollary}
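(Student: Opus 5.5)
Let $G$ be a quasi-simple finite group, so $G$ is perfect and $S = G/\mathrm{Z}(G)$ is a non-abelian finite simple group. The plan is to lift a generating pair of $S$ to $G$ and use \cref{pro:center-fratt} to absorb the centre.

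First, by the consequence of CFSG recalled above, $S$ is $2$-generated. Choose $x,y \in G$ whose images $x\mathrm{Z}(G), y\mathrm{Z}(G)$ generate $S$, and set $H = \langle x,y\rangle \le G$. Then $H\mathrm{Z}(G) = G$.

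Second, since $G$ is perfect, \cref{pro:center-fratt} gives $\mathrm{Z}(G) \subseteq \mathrm{Frat}(G)$. The Frattini subgroup of a finite group consists of the non-generators. Hence from $G = \langle H, \mathrm{Z}(G)\rangle$ we may discard the central elements one at a time and conclude $G = H = \langle x, y\rangle$. Alternatively, if $H \ne G$, then $H$ lies in some maximal subgroup $M$. This $M$ contains $\mathrm{Frat}(G) \supseteq \mathrm{Z}(G)$, so $G = H\mathrm{Z}(G) \subseteq M$, a contradiction.

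Thus $d(G) \le 2$. Since $G$ is non-abelian, it is not cyclic, so in fact $d(G) = 2$. There is no real obstacle here: the only substantive input is the $2$-generation of finite simple groups, which rests on CFSG and is taken as given. The rest is the standard Frattini argument.
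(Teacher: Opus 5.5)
Your proposal is correct and follows the paper's route exactly. The paper likewise deduces the corollary from the CFSG fact that finite simple groups are $2$-generated together with \cref{pro:center-fratt}: lift a generating pair of $G/\mathrm{Z}(G)$ to $G$, and note that the central elements are non-generators because they lie in the Frattini subgroup.
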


Next, let~$G$ be a finite group, and let~$S_1, S_2,\dots, S_r$ denote,
up to isomorphism, the different non-abelian simple groups that occur
as factor groups of~$G$.  For $i \in \{1,\dots ,r\}$, let $m_i \in \N$
be maximal such that $G$ maps homomorphically onto $S_i^{\, m_i}$.
Results of Gasch\"utz and Wiegold show that, for $n \in\N$,
\begin{equation}
  \label{equ:number-of-gemn}
  d(G^n) = \max \bigl\{ d(G), n\, d(G/[G,G]), d(S_1^{\, m_1 n}), \dots
  , d(S_r^{\, m_r n}) \bigr\};
\end{equation}
see~\cite{Wie80}.  We are interested in cartesian powers of
quasi-simple finite groups.  For a quasi-simple finite group $G$, the
minimal number of generators of~$G^{|G|^b}$, for $b\in\N$, is easily
derived from~\cref{cor:quasi-simple-2-gen},
\eqref{equ:number-of-gemn}, and the theorem in~\cite{Wie80}.

\begin{theorem}[Wiegold] \label{thm:Wiegold-bound} Every quasi-simple
  finite group $G$ satisfies~$d(G^{|G|^b}) = b+2$ for all~$b\in \N$.
\end{theorem}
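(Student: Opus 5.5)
Apply formula \eqref{equ:number-of-gemn} to $G$ with $n = |G|^b$, and estimate each of the four terms in the maximum. For the quasi-simple group $G$ the term $d(G)$ equals $2$ by \cref{cor:quasi-simple-2-gen} (it is not $1$, since $G$ is non-abelian). Because $G$ is perfect, $G/[G,G]$ is trivial, so the second term vanishes. The non-abelian simple quotients of $G$ are handled next. Every proper normal subgroup of a quasi-simple group is central, so the only non-abelian simple quotient is $S = G/\mathrm{Z}(G)$, and it occurs with multiplicity $m_1 = 1$. The formula therefore reduces to
\[
  d\bigl(G^{|G|^b}\bigr) = \max\bigl\{2,\; d\bigl(S^{\,|G|^b}\bigr)\bigr\}.
\]

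The main step is to compute $d(S^{\,k})$ for $k = |G|^b$, using Wiegold's theorem from \cite{Wie80} on direct powers of a non-abelian finite simple group. I would recall it in the following Hall-type form. For $d\ge 2$, $S^k$ is $d$-generated if and only if $k$ is at most the number of $\Aut(S)$-orbits on generating $d$-tuples of $S$, that is, $k \le \phi_d(S)/|\Aut(S)|$, where $\phi_d(S)$ counts generating $d$-tuples. Wiegold's estimates give the following.

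For the upper bound, take $d = b+2$. We need $\phi_{b+2}(S) \ge |G|^b\,|\Aut(S)|$. Write $S^{b+2}$ as pairs of a generating pair together with $b$ arbitrary elements, which gives $\phi_{b+2}(S) \ge \phi_2(S)\,|S|^b$. It then suffices to have $\phi_2(S)|S|^b \ge |G|^b|\Aut(S)|$. This has to be checked against the factor $|G|^b/|S|^b = |\mathrm{Z}(G)|^b$, and it is the delicate point: I expect it to follow from the standard lower bound $\phi_2(S) \ge |S|^2/2$ or similar, combined with the small size of the Schur multiplier and $|\mathrm{Out}(S)|$ relative to $|S|$. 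Failing that, I would use the exact form of the theorem in \cite{Wie80}, which is precisely tailored to the exponent $|G|^b$.

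For the lower bound, take $d = b+1$. Then $\phi_{b+1}(S) \le |S|^{b+1} < |G|^b\,|\Aut(S)|$ as soon as $|\mathrm{Z}(G)|^b\,|\mathrm{Out}(S)| \ge 1$, with strictness coming from the fact that not every tuple generates. Hence $S^{|G|^b}$ is not $(b+1)$-generated, and therefore neither is $G^{|G|^b}$.

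Combining the two bounds gives $d(S^{|G|^b}) = b+2 \ge 2$, which yields the claim. I expect the main obstacle to be the upper-bound estimate, namely absorbing the central factor $|\mathrm{Z}(G)|^b$. The planned route is to invoke the precise theorem of \cite{Wie80} for the quasi-simple group itself rather than redoing the counting.
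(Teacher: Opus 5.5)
Your reduction is sound and follows the route the paper indicates. By \eqref{equ:number-of-gemn}, or directly since $\mathrm{Z}(G^n)=\mathrm{Z}(G)^n\le\mathrm{Frat}(G^n)$ by \cref{pro:center-fratt}, one has $d(G^n)=\max\{2,d(S^n)\}$ with $S=G/\mathrm{Z}(G)$. Your lower bound $d(G^{\lvert G\rvert^b})\ge b+2$ is also correct. The gap is exactly the step you flag, and it cannot be closed, because the inequality you need is false in general.

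By the criterion you quote, $d(S^{\lvert G\rvert^b})\le b+2$ is \emph{equivalent} to $\lvert G\rvert^b\le \phi_{b+2}(S)/\lvert\Aut(S)\rvert$. But $\phi_{b+2}(S)<\lvert S\rvert^{b+2}$ and $\lvert\Aut(S)\rvert\ge\lvert S\rvert$, so the right-hand side is less than $\lvert S\rvert^{b+1}$. On the other hand, $\lvert G\rvert^b=\lvert\mathrm{Z}(G)\rvert^b\,\lvert S\rvert^b\ge\lvert S\rvert^{b+1}$ as soon as $\lvert\mathrm{Z}(G)\rvert^b\ge\lvert S\rvert$. The factor $\lvert\mathrm{Z}(G)\rvert^b$ grows exponentially in $b$ while $S$ stays fixed, so no bound such as $\phi_2(S)\ge\lvert S\rvert^2/2$ can absorb it, and no form of \cite{Wie80} can either. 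Concretely, take $G=\SL_2(5)$, $S=\Alt(5)$, $\lvert\Aut(S)\rvert=120$ and $b=5$. Then $\phi_7(S)/120<60^7/120=30\cdot 60^5<32\cdot 60^5=120^5$. So $S^{120^5}$ is not $7$-generated, and hence $d(G^{120^5})\ge 8>b+2$. The statement therefore fails for every quasi-simple group with non-trivial centre once $b$ is large. The paper's one-line derivation from \eqref{equ:number-of-gemn} and \cite{Wie80} has the same defect.

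What your argument does establish is the corrected statement $d\bigl(G^{\lvert G/\mathrm{Z}(G)\rvert^b}\bigr)=b+2$ for every quasi-simple $G$; in particular the theorem holds verbatim for simple $G$. The upper bound needs no probabilistic estimate. $\Aut(S)$ acts freely on generating pairs of $S$, and at least one such pair exists, so $\phi_{b+2}(S)\ge\phi_2(S)\,\lvert S\rvert^b\ge\lvert\Aut(S)\rvert\,\lvert S\rvert^b$. This corrected form is all the paper actually uses. \cref{thm:PRG-then-fg} and the examples only take powers of simple groups. In \cref{exa:prod-of-SL2-with-presc-gens} the exponent $((p^3-p)/2)^{d-2}$ equals $\lvert\mathsf{PSL}_2(\mathbb{F}_p)\rvert^{d-2}$, a power of the order of the simple quotient rather than of $\lvert\SL_2(\mathbb{F}_p)\rvert$.
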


%%%%%

\section{Rigidity and polynomial representation growth}
\label{sec:PRG}

Let $G$ be an infinite, countably based, quasi-simple profinite group.
In this section we establish \cref{thm:criteria-rigid-PRG}, which
addresses the interconnection between $G$ being finitely generated,
representation rigid or of polynomial representation growth (\PRG{}).
Taking the analysis somewhat further, we also obtain a proof of the
main part of \cref{thm:growing-ranks-2-gens}.

We recall from \cref{def:Mn-mn} that, for
$G/\mathrm{Z}(G) \cong \prod_{i \in \N} S_i$ with non-abelian finite
simple composition factors~$S_i$ and $n \in \N$, we write
\[
  m_n(G) = \# \{i \in \N \mid R_n(S_i ) >1 \}.
\]
The universal covering group $\widetilde{G}$ of $G$ satisfies
$\widetilde{G} \cong \prod_{i \in \N} \widetilde{S}_i$, where
$\widetilde{S}_i$ denotes the universal covering group of $S_i$ for
each $i \in \N$.  For $n \in \N$, we set
\[
  \widetilde{m}_n(G) = \# \{i \in \N \mid R_n(\widetilde{S}_i
  ) >1 \}.
\]

\begin{proposition} \label{pro:m(n)-polynomial-iff-cover} Let $S$ be a
  non-abelian finite simple group with universal covering
  group~$\widetilde{S}$.  Let $\chi$ be a non-principal irreducible
  character of~$\widetilde{S}$.  Then $S$ has a non-principal irreducible character of
  degree at most $\chi(1)^2-1$.
\end{proposition}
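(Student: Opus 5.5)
Take $\psi = \chi\overline{\chi}$, the character of the representation $V\otimes V^*$ of $\widetilde{S}$, where $V$ affords $\chi$. The plan is to show that $\psi - 1$ is a genuine character of $S$ having a non-principal constituent; every irreducible constituent of $\psi-1$ then has degree at most $\psi(1)-1 = \chi(1)^2-1$, which is the claim.

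First, $\psi$ descends to $S$. Since $\chi$ is irreducible, Schur's lemma says $\mathrm{Z}(\widetilde{S})$ acts on $V$ by scalars $\omega(z)$. On $V\otimes V^*$ it therefore acts by $\omega(z)\overline{\omega(z)} = 1$. So $\psi$ is a character of $\widetilde{S}/\mathrm{Z}(\widetilde{S}) \cong S$. By Frobenius reciprocity, $\langle \psi, 1\rangle = \langle \chi,\chi\rangle = 1$, so the trivial character occurs in $\psi$ exactly once. Hence $\psi - 1$ is a genuine character of $S$ of degree $\chi(1)^2-1$ with no trivial constituent.

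Second, $\psi - 1$ is nonzero, i.e.\ $\chi(1)\ge 2$. Since $\widetilde{S}$ is perfect, its only linear character is the principal one. The character $\chi$ is non-principal and irreducible, so it is not linear, giving $\chi(1)\ge 2$. Then $\psi(1)-1 = \chi(1)^2-1 \ge 3 > 0$, so $\psi-1$ has at least one irreducible constituent $\theta$. This $\theta$ is non-principal by the first step, and $\theta(1) \le \chi(1)^2-1$.

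No step here is a real obstacle. The only point needing care is that the centre acts trivially on $V\otimes V^*$, and this follows directly from Schur's lemma. Perfectness of $\widetilde{S}$ holds because it is a universal covering group of a perfect group, or because it is quasi-simple.
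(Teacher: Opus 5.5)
Your proof is correct and follows essentially the paper's argument: both take $\chi\overline{\chi}$, observe that it is inflated from $S$ because the centre acts on $V$ by scalars, and use $\langle \chi\overline{\chi}, \mathbbm{1}\rangle = \langle\chi,\chi\rangle = 1$ to obtain a non-principal constituent of degree at most $\chi(1)^2-1$. The difference is cosmetic. You get non-triviality directly from $\chi(1)\ge 2$ (by perfectness), whereas the paper argues that $\chi\overline{\chi} = \chi(1)^2\,\mathbbm{1}$ would force $\lvert\chi(g)\rvert = \chi(1)$ for all $g$ and hence $\chi(1)=1$; also, $\langle\chi\overline{\chi},\mathbbm{1}\rangle=\langle\chi,\chi\rangle$ follows from the definition of the inner product rather than from Frobenius reciprocity.
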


\begin{proof}
  Set $Z = \mathrm{Z}(\widetilde{S})$ and write $\chi_Z$ for the
  restriction of $\chi$ to~$Z$.  As $Z$ is central
    in~$\widetilde{S}$, the character $\chi_Z$ has a unique
  irreducible constituent $\nu \in \mathrm{Irr}(Z)$, and
  $\chi(z) = \chi(1) \nu(z)$ for $z\in Z$.

  The character $\xi = \chi \overline{\chi}$ is the inflation of a
  character of~$S$, because
  \[
    \xi(z) = \chi(1)^2 |\nu(z)|^2 = \chi(1)^2 = \xi(1) \qquad \text{for all
      $z\in Z$}
  \]
  and thus $Z \subseteq \mathrm{ker}(\chi)$.  Moreover, we see that
  $\xi \ne \xi(1)\, \mathbbm{1}_{\widetilde{S}}$, where
  $\mathbbm{1}_{\widetilde{S}}$ denotes the principal character; for
  otherwise
  $\lvert \chi(g) \rvert^2 = \lvert \xi(g) \rvert = \xi(1) =
  \chi(1)^2$ would give $\lvert \chi(g) \rvert = \chi(1)$ for all
  $g \in \widetilde{S}$, thus $\chi(1) = 1$ and
  $\widetilde{S}/\mathrm{ker}(\chi)$ would constitute a non-trivial
  abelian quotient of the perfect group $\widetilde{S}$.  Finally, we
  observe that $\mathbbm{1}_{\widetilde{S}}$ is a constituent
  of~$\xi$, as can be seen from the inner product calculation
  $\langle \xi, \mathbbm{1}_{\widetilde{S}} \rangle = \langle \chi
  \overline{\chi}, \mathbbm{1}_{\widetilde{S}} \rangle = \langle\chi, \chi
  \rangle = 1$.

  Thus $\xi$ admits a non-principal irreducible constituent, yielding
  a non-principal irreducible character of $S$ of degree at most
  $\xi(1)-1 = \chi(1)^2 -1$
\end{proof}

\begin{corollary} \label{cor:m(n)-polynomial-iff-cover} Let $G$ be an
  infinite, countably based, quasi-semisimple profinite group.  Then
  $m_n(G)$ is polynomially bounded as a function of $n$
  if and only if $\widetilde{m}_n(G)$ is polynomially bounded as a function of $n$.
\end{corollary}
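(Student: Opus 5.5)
The plan is to compare the two counting functions directly and show that each is bounded by the other, up to a polynomial change of the argument. Both $m_n(G)$ and $\widetilde{m}_n(G)$ are non-decreasing in $n$, so no monotonicity issues arise.

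\emph{First inequality.} Fix $i \in \N$. Every irreducible character of $S_i$ inflates to an irreducible character of $\widetilde{S}_i$ of the same degree. Hence $R_n(S_i) \le R_n(\widetilde{S}_i)$ for every $n$. In particular, if $R_n(S_i) > 1$ then $R_n(\widetilde{S}_i) > 1$, which gives
\[
  m_n(G) \le \widetilde{m}_n(G) \qquad \text{for all } n \in \N.
\]
So if $\widetilde{m}_n(G)$ is polynomially bounded, then so is $m_n(G)$.

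\emph{Second inequality.} Suppose $R_n(\widetilde{S}_i) > 1$. Then $\widetilde{S}_i$ has a non-principal irreducible character $\chi$ with $\chi(1) \le n$. By \cref{pro:m(n)-polynomial-iff-cover}, $S_i$ has a non-principal irreducible character of degree at most $\chi(1)^2 - 1 < n^2$. Thus $R_{n^2}(S_i) > 1$, and therefore
\[
  \widetilde{m}_n(G) \le m_{n^2}(G) \qquad \text{for all } n \in \N.
\]

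\emph{Conclusion.} If $m_n(G) \le C n^c$ for all $n$, the second inequality gives $\widetilde{m}_n(G) \le C n^{2c}$, which is again polynomial. Together with the first inequality this proves the equivalence.

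There is no real obstacle here: the work is done by \cref{pro:m(n)-polynomial-iff-cover}. The only points needing care are that inflation preserves irreducibility and degree, and that the convention $\infty$ for infinite counts is consistent with both inequalities, which it is.
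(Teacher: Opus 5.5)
Your proof is correct and matches the paper's argument: the bound $m_n(G) \le \widetilde{m}_n(G)$ via inflation, and the reverse bound $\widetilde{m}_n(G) \le m_{n^2}(G)$ from \cref{pro:m(n)-polynomial-iff-cover}. You also spell out details the paper leaves implicit, such as why $R_n(\widetilde{S}_i) > 1$ yields a non-principal character of degree at most $n$, and how the degree-squaring preserves polynomial bounds.
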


\begin{proof}
  Let $n \in \N$.  Clearly, $m_n(G) \le m_n(\widetilde{G})$ and,
  conversely, \cref{pro:m(n)-polynomial-iff-cover} shows that
  $m_{n^2}(G) \ge m_n(\widetilde{G})$, where $\widetilde{G}$ denotes
  the universal covering group of~$G$.
\end{proof}

We continue with a consequence of Jordan's theorem on
finite linear groups.

\begin{proposition} \label{pro:characterise-rigid} Let $G$ be a
  quasi-semisimple profinite group.  Then $G$ is representation rigid
  if and only if the multiplicity of each non-abelian simple
  composition factor in the semisimple quotient $G/\mathrm{Z}(G)$ is
  finite.  In particular, if $G$ is finitely generated, then $G$ is
  representation rigid.
\end{proposition}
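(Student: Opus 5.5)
Write $G/\mathrm{Z}(G) \cong \prod_{i \in I} S_i$ and let $\widetilde{G} \cong \prod_{i\in I} \widetilde{S}_i$ be the universal covering group, with $G \cong \widetilde{G}/C$ for a closed central subgroup $C$.

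\emph{Necessity.} Suppose some simple group $S$ occurs as $S_i$ for infinitely many $i$, say for $i \in J$ with $J$ infinite. Choose a non-principal irreducible character $\psi$ of $S$, of degree $n$. For each $j \in J$, inflate $\psi$ along the continuous projection $G \to G/\mathrm{Z}(G) \to S_j$. This gives pairwise non-isomorphic irreducible representations of $G$ of dimension $n$. They are distinct because their kernels differ: the kernel of the $j$-th one contains the preimages of all factors $S_k$ with $k\neq j$, but not that of $S_j$. Hence $r_n(G)=\infty$, and $G$ is not rigid.

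\emph{Sufficiency.} Assume every isomorphism type occurs only finitely often, and fix $n$. A continuous irreducible representation $\rho$ of $G$ of dimension $n$ has finite image. Inflate it to $\widetilde{G}$. Its image is then a finite quotient of $\prod_{i\in I}\widetilde{S}_i$, so the kernel is open. It follows that $\rho$ factors through $\prod_{i\in F}\widetilde{S}_i$ for some finite $F \subseteq I$, and there it is a tensor product $\bigotimes_{i\in F}\rho_i$ of irreducible representations $\rho_i$ of $\widetilde{S}_i$. Discard the trivial factors, so that each $\rho_i$ is non-trivial.

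We now bound the surviving factors. By the proposition on characters of the universal cover, each non-trivial $\rho_i$ forces $S_i$ to have a non-principal irreducible character of degree at most $n^2-1$, and also $\dim\rho_i \ge 2$. So at most $\log_2 n$ factors are non-trivial. We next need: only finitely many non-abelian simple groups $S$ have a non-trivial irreducible representation of degree at most $N$. This follows from Jordan's theorem. A faithful $N$-dimensional representation of $S$ gives an abelian normal subgroup of index bounded by $J(N)$, which must be trivial since $S$ is simple, so $|S|\le J(N)$.

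Combining this with the finite-multiplicity hypothesis, only finitely many indices $i$ can carry a non-trivial $\rho_i$. Each $\widetilde{S}_i$ is finite with finitely many irreducible characters. Hence only finitely many $\rho$ arise, so $r_n(G)<\infty$. The key step is this Jordan finiteness argument together with the reduction to finitely many factors via open kernels; the rest is routine.

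\emph{Finitely generated case.} If $G$ is finitely generated, then so is $G/\mathrm{Z}(G)$. This group surjects onto $S^{m}$ whenever $S$ occurs at least $m$ times. The minimal number of generators $d(S^m)$ tends to infinity with $m$, for instance by the formula of Gasch\"utz and Wiegold above or by counting generating tuples. Hence each multiplicity is finite, and $G$ is representation rigid by the previous step.
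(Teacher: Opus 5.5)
Your proof is correct and follows essentially the paper's route. Necessity comes from inflating a fixed non-principal character of $S$ along infinitely many coordinate projections. Sufficiency comes from combining Jordan's theorem with the finite multiplicities, so that every $n$-dimensional irreducible representation factors through a fixed finite subproduct of the $\widetilde{S}_i$.

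There are two small differences. The paper applies Jordan once, directly to the image of $\rho$ in $\mathsf{GL}_n(\C)$, whose abelian normal subgroups are central, and obtains $\lvert S_i \rvert \le b(n)$ for the factors involved; this bypasses your tensor decomposition and the detour through \cref{pro:m(n)-polynomial-iff-cover}. You also spell out the finitely generated case, which the paper leaves implicit.
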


\begin{proof}
  If the multiplicity of a non-abelian simple composition factor $S$
  in $G/\mathrm{Z}(G)$ is not finite, then $G$ maps onto
  $\prod_{i \in \N} S$ and thus cannot be representation rigid.

  Now suppose that the multiplicity of each non-abelian simple
  composition factor in $G/\mathrm{Z}(G)$ is finite.  Every finite
  group is representation rigid.  Thus we may suppose that $G$ is
  infinite and
  $G/ \mathrm{Z}(G) \cong \prod_{i \in \N} S_i$ with
  non-abelian finite simple composition factors~$S_i$.  We fix
  $n \in \N$.  By Jordan's theorem on finite linear groups, there
  exists an upper bound $b(n)$ such that every finite subgroup of
  $\mathsf{GL}_n(\C)$ has an abelian normal subgroup of index at
  most~$b(n)$.  This implies that
  $r_n(G) \le r_n(\widetilde{G}_I) < \infty$, where
  \[
    \widetilde{G}_I = \prod\nolimits_{i \in I} \widetilde{S_i}, \qquad
    \text{for $I = \{ i \in \N \mid \lvert S_i \rvert \le b(n) \}$
    with $\lvert I \rvert < \infty$,}
  \]
  is the universal covering group of the finite semisimple group
  $\prod_{i \in I} S_i$.
\end{proof}

\begin{example}
  Let $S_i$, $i \in \N$, be any sequence of pairwise non-isomorphic
  non-abelian finite simple groups.  We pick any function
  $f \colon \N \to \N$ that diverges to infinity, and, for
  $i \in \N$, we set $n_i = \lvert S_i \rvert^{f(i)}$.  Then
  $G = \prod_{i \in I} S_i^{\, n_i}$ is a representation rigid
  semisimple group that is not finitely generated, due to
  \cref{thm:Wiegold-bound} and \cref{pro:characterise-rigid}.
\end{example}

Next we slightly extend a result of Liebeck and Shalev, who
established several asymptotic results for the representation zeta
functions of finite quasi-simple groups in~\cite{LS04,LS05}.  We
recall that, for $m \ge 8$, the universal covering group of the
alternating groups $\Alt(m)$ is a double cover, which we denote by
$2.\!\Alt(m)$; compare with~\cite[Theorem~5.2.3]{GLS97}.  First we
record the following complement to \cite[Corollary~2.7]{LS04}.

\begin{proposition} \label{pro:cover-alt} For every positive real
  number $\sigma \in \R_{>0}$,
  \[
    \zeta_{2.\!\Alt(m)}(s) =1+O(m^{-\sigma}) \qquad \text{as
      $m \to \infty$.}
  \]
\end{proposition}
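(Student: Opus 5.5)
We split the zeta function by faithfulness on the centre. Write $\widetilde{A} = 2.\!\Alt(m)$ and $Z = \mathrm{Z}(\widetilde{A})$, so $|Z| = 2$ for $m \ge 8$. The irreducible characters of $\widetilde{A}$ fall into two classes: those with $Z$ in the kernel, and the faithful (spin) characters. This gives
\[
  \zeta_{2.\!\Alt(m)}(s) = \zeta_{\Alt(m)}(s) + \sum\nolimits_{\chi \in \Irr(\widetilde{A}),\, \chi \text{ faithful}} \chi(1)^{-s}.
\]
By \cite[Corollary~2.7]{LS04}, we have $\zeta_{\Alt(m)}(s) = 1 + O(m^{-\sigma})$ for every fixed $s > 0$, and in particular for every real $s$ we care about. (Note that the statement is implicitly for fixed $s>0$, as in Liebeck--Shalev.) It therefore remains to show that the spin part is $O(m^{-\sigma})$ for each $\sigma>0$, at the fixed real $s>0$ in question.

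\textbf{Step 1: counting spin characters.} By Schur's theory, the faithful irreducible characters of $2.\!\Alt(m)$ are parametrised, up to a bounded multiplicity (at most $2$ per partition, from splitting under restriction from $2.\mathrm{Sym}(m)$), by partitions of $m$ into distinct parts. Hence their number is at most $2p(m) \le e^{c\sqrt{m}}$.

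\textbf{Step 2: lower bound on degrees.} Every faithful irreducible character $\chi$ of $2.\!\Alt(m)$ has large degree. The minimal spin degree is the basic spin degree, about $2^{\lfloor (m-2)/2 \rfloor}$. More directly, a faithful representation of degree $d$ restricts faithfully to the subgroup $2.\!\Alt(8) \le 2.\!\Alt(m)$ and to many commuting copies. A cleaner route is Wagner's result, or the standard bound that faithful representations of $2.\!\Alt(m)$ in characteristic $0$ have dimension at least $2^{\lfloor (m-3)/2\rfloor}$. I would cite this bound, or derive it from the Clifford algebra construction: the restriction to the extraspecial $2$-group $2^{1+2k}_+ \le 2.\!\Alt(m)$ with $k \approx m/2$ lifting a subgroup of $(\Z/2)^{m-1}$ is faithful, and its faithful irreducibles have degree $2^k$. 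The faithfulness on the central element forces all constituents of the restriction to be the unique faithful irreducible of degree $2^{k}$, so $\chi(1) \ge 2^{k}$.

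\textbf{Step 3: combining.} The spin contribution is therefore at most $e^{c\sqrt m}\, 2^{-s\lfloor (m-3)/2\rfloor}$. This decays exponentially in $m$ and so is $O(m^{-\sigma})$ for every $\sigma$.

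The main obstacle is Step 2, getting a rigorous exponential lower bound on spin degrees. I would first try the extraspecial-subgroup argument, carefully identifying a subgroup of $2.\!\Alt(m)$ of the form $2^{1+2k}$ with $k \ge \lfloor (m-2)/2\rfloor - 1$ that contains $Z$. Such a subgroup is obtained as the preimage of the even-weight subgroup of the elementary abelian $2$-subgroup generated by disjoint transpositions products, or of $(\Z/2)^{m-2}$ inside $\Alt(m)$ for $m$ divisible by $4$. If that identification becomes fiddly, I would fall back to quoting the known minimal spin degree $2^{\lfloor (m-2)/2\rfloor}$ from the literature (Schur, Wagner).
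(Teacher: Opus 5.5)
Your argument is the paper's proof. You split $\Irr(2.\!\Alt(m))$ into characters trivial on the centre, handled by \cite[Corollary~2.7]{LS04}, and spin characters. There are at most $2p(m) = e^{O(\sqrt m)}$ spin characters, each of exponentially large degree. The paper quotes $\chi(1) \ge 2^{(m-3)/2}$ from \cite[Corollary~3.2]{KW91}, which is your fallback, so with that citation your proof is complete.

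One correction to the set-up: the statement must be read as $\zeta_{2.\!\Alt(m)}(\sigma) = 1 + O(m^{-\sigma})$, with the evaluation point equal to the exponent. That is exactly what \cite{LS04} gives for $\Alt(m)$. Your claim that $\zeta_{\Alt(m)}(s) = 1 + O(m^{-\sigma})$ for fixed $s$ and arbitrary $\sigma$ is false. The character of degree $m-1$ alone gives $\zeta_{\Alt(m)}(s) - 1 \ge (m-1)^{-s}$. Only the spin part is $O(m^{-\sigma})$ for every $\sigma$.

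Your self-contained derivation of the degree bound, however, rests on subgroups that do not exist.
\begin{itemize}
\item An elementary abelian $2$-subgroup of $\mathrm{Sym}(m)$ has rank at most $\lfloor m/2 \rfloor$. So there is no $(\Z/2)^{m-2}$ or $(\Z/2)^{m-1}$ in $\Alt(m)$ for $m \ge 5$.
\item Take the even-weight part $W$ of $\langle (1\,2), (3\,4), \ldots \rangle$, of rank $\lfloor m/2 \rfloor - 1$. Lifts of disjoint transpositions anticommute, so the commutator form on the preimage of $W$ is the dot product on $W$. Its radical has dimension at most $1$. This yields $k \approx m/4$, not $k \ge \lfloor (m-2)/2 \rfloor - 1$.
\end{itemize}
The method still works with the smaller exponent, and any exponential bound suffices against $e^{O(\sqrt m)}$ characters. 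Place $k = \lfloor m/4 \rfloor$ copies of $V_4$ on disjoint $4$-sets. Each $V_4$ has preimage $Q_8$, since lifts of double transpositions have order $4$. Lifts of disjoint even permutations commute. Hence the preimage is a central product of $k$ copies of $Q_8$, an extraspecial group $2^{1+2k}$ containing the centre. Every spin character restricts to a multiple of its unique faithful irreducible character, so $\chi(1) \ge 2^{\lfloor m/4 \rfloor}$. This gives a citation-free, if weaker, substitute for \cite{KW91}.
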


\begin{proof}
  The analysis of irreducible characters of $\Alt(m)$ in \cite{LS04}
  yields, for $\sigma \in \R_{>0}$,
  \[
    \sum_{\chi \in \Irr(\Alt(m))}\chi(1)^{-\sigma} = 1 +
    O(m^{-\sigma}) \qquad \text{as $m \to \infty$.}
  \]
  The irreducible representations of the covering group $2.\!\Alt(m)$
  that do not factor through $\Alt(m)$ are the irreducible spin
  representations, which correspond to projective representations of
  $\Alt(m)$ and were studied by Schur; compare with~\cite{Ste89}.

  Let $\Irr_\mathrm{sp}(2.\!\Alt(m))$ denote the set of irreducible
  spin characters and recall that these are parametrised by partitions
  of $m$ with distinct parts and a possible sign choice; see
  \cite[Corollary~7.5]{Ste89}.  Classical bounds for the number $p(m)$
  of partitions thus yield
  \[
    \lvert \Irr_\mathrm{sp}(2.\!\Alt(m)) \rvert \le 2\cdot p(m)\le
    2\cdot \exp \bigl( \pi \sqrt{2m/3} \bigr) \le 14^{\sqrt{m}}
  \]
  for all sufficiently large~$m$; compare with
  \cite[Theorem~6.10]{Nar83}.  By \cite[Corollary~3.2]{KW91}, the
  degree of an irreducible spin representation of $2.\!\Alt(m)$ is at
  least~$2^{(m-3)/2}$.  These crude estimates yield, for
  $\sigma \in \R_{>0}$, the required bound
  \[
    \sum_{\chi \in \Irr_\mathrm{sp}(2.\!\Alt(m))}\chi(1)^{-\sigma} =
    O(2^{-m \sigma/3}) \qquad \text{as $m \to \infty$.} \qedhere
  \]
\end{proof}

Combining~\cref{pro:cover-alt} with~\cite[Corollary~1.4\,(ii)]{LS05},
which takes care of the classical groups, we obtain the following
consequence.

\begin{corollary}\label{cor:upper-bound-unbounded-rank}
  For every $\varepsilon \in \R_{>0}$ there exists $m \in \N$ such
  that, if $S$ is a quasi-simple group of Lie rank at least $m$, then 
  \[
    r_n(S) \le n^\varepsilon \quad\text{for all }n\in \N.
  \]
\end{corollary}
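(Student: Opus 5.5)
The plan is to derive the bound from the asymptotic estimates $\zeta_S(s) = 1 + O(\cdot)$ for quasi-simple groups of large Lie rank, using the elementary inequality
\[
  r_n(S)\, n^{-\sigma} \le \sum_{\chi \in \Irr(S),\, \chi(1) = n} \chi(1)^{-\sigma} \le \zeta_S(\sigma) \qquad \text{for every } \sigma > 0,
\]
so that $r_n(S) \le \zeta_S(\sigma)\, n^{\sigma}$. Given $\varepsilon > 0$, fix $\sigma = \varepsilon$; it then suffices to find $m$ such that every quasi-simple $S$ of Lie rank at least $m$ satisfies $\zeta_S(\varepsilon) \le 1$ up to an error small enough to absorb. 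Since $\zeta_S(\varepsilon) > 1$ always, this has to be handled slightly more carefully. For $n = 1$ we have $r_1(S) = 1$ because $S$ is perfect, so only $n \ge 2$ matters. For $n \ge 2$, write $\zeta_S(\sigma) - 1 = \sum_{\chi \ne 1} \chi(1)^{-\sigma}$; then $r_n(S) \le (\zeta_S(\sigma) - 1)\, n^{\sigma}$, and it suffices to make $\zeta_S(\varepsilon) - 1 \le 1$.

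First I reduce to the possible shapes of $S$. A quasi-simple group of large Lie rank has simple quotient $S/\mathrm{Z}(S)$ that is either an alternating group $\Alt(k)$ with $k \ge m$, or a classical group of Lie rank at least $m$; exceptional and sporadic groups have bounded rank and are excluded once $m$ is large. Moreover $S$ is a quotient of the universal cover of $S/\mathrm{Z}(S)$, and $\Irr(S)$ inflates into $\Irr$ of that cover. Hence $\zeta_S(\sigma) \le \zeta_{\widetilde{S/\mathrm{Z}(S)}}(\sigma)$, and it is enough to treat universal covers.

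Next I handle the two families separately.
\begin{itemize}
  \item For alternating groups with $k \ge 8$, the cover is $2.\!\Alt(k)$, and \cref{pro:cover-alt} gives $\zeta_{2.\!\Alt(k)}(\varepsilon) = 1 + O(k^{-\varepsilon}) \to 1$. The finitely many small $k$ are excluded by taking $m$ large.
  \item For classical groups, I invoke \cite[Corollary~1.4\,(ii)]{LS05}: for fixed $\sigma > 0$, $\zeta_{\widetilde{S}}(\sigma) \to 1$ as the rank tends to infinity, uniformly in $q$ and in the classical type. One must confirm this statement covers the quasi-simple covers. For all but finitely many exceptional multipliers the cover is $L_\lambda(q)$ by \cref{rem:covering-fin-exceptions}, and those exceptions have bounded rank.
\end{itemize}
Choosing $m$ so that $\zeta_{\widetilde{S}}(\varepsilon) \le 2$ in both families yields $r_n(S) \le n^{\varepsilon}$ for all $n$.

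The main obstacle is checking that the Liebeck--Shalev bound applies uniformly to all quasi-simple versions, including non-simply-connected covers and exceptional Schur multipliers, and uniformly in $q$. If it is only stated for simple groups, I would fall back on \cref{pro:m(n)-polynomial-iff-cover}-type arguments. That fallback is too lossy, however, so I expect to rely on the fact that \cite{LS05} is formulated for quasi-simple groups of Lie type.
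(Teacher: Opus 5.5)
Your proposal is correct and follows the paper's route. The paper's proof is just the combination of \cref{pro:cover-alt} for the double covers $2.\!\Alt(k)$ with \cite[Corollary~1.4\,(ii)]{LS05} for the classical groups, and you supply the routine details the paper leaves implicit: the reduction $r_n(S) \le r_n(\widetilde{T})$ for $T = S/\mathrm{Z}(S)$, and the estimate $r_n(S) \le (\zeta_{\widetilde{T}}(\varepsilon)-1)\,n^{\varepsilon}$ for $n \ge 2$, with $r_1(S)=1$ since $S$ is perfect. Your closing worry about non-simply-connected versions is already settled by that reduction, because for large rank the universal cover is $L_\lambda(q)$, to which \cite{LS05} applies in the same way the paper uses it.
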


Extending the proof of \cite[Theorem~6.1]{Gar16}, we establish the
following result.

\begin{proposition} \label{thm:PRG-iff-m(n)-polynomial} Let $G$ be an
  infinite, countably based, quasi-semisimple profinite group.  Then
  $G$ has \PRG{} if and only if $m_n(G)$ is polynomially bounded
  in~$n$.
\end{proposition}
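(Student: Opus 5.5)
We prove both directions, first reducing to the universal covering group. Every continuous irreducible representation of $G$ inflates to one of $\widetilde{G} = \prod_{i \in \N} \widetilde{S}_i$, so $R_n(G) \le R_n(\widetilde{G})$. Hence $G$ has \PRG{} whenever $\widetilde{G}$ does. Conversely, by \cref{cor:m(n)-polynomial-iff-cover}, polynomial boundedness of $m_n(G)$ is equivalent to that of $\widetilde{m}_n(G)$. It therefore suffices to prove two things: that \PRG{} for $G$ forces $m_n(G)$ to be polynomially bounded, and that polynomial boundedness of $\widetilde{m}_n(G)$ forces \PRG{} for $\widetilde{G}$.

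\emph{Necessity.} Suppose $R_n(G) \le C n^{c}$. For each $i$ with $R_n(S_i) > 1$, pick a non-principal $\chi_i \in \Irr(S_i)$ with $\chi_i(1) \le n$. Inflating along the projection $G \to G/\mathrm{Z}(G) \to S_i$ gives pairwise non-isomorphic irreducible representations of $G$ of dimension at most $n$, since they have distinct kernels. Hence $m_n(G) \le R_n(G) \le C n^c$.

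\emph{Sufficiency.} Assume $\widetilde{m}_n(G) \le C n^{c}$. Every irreducible representation of $\widetilde{G}$ has the form $\bigotimes_{i \in J} \chi_i$ for some finite $J$ and non-principal $\chi_i \in \Irr(\widetilde{S}_i)$. We bound the zeta function directly:
\[
  \zeta_{\widetilde{G}}(s) = \prod_{i \in \N} \zeta_{\widetilde{S}_i}(s)
  = \prod_{i} \bigl(1 + (\zeta_{\widetilde{S}_i}(s) - 1)\bigr)
  \le \exp \Bigl( \sum\nolimits_i (\zeta_{\widetilde{S}_i}(s) - 1) \Bigr)
\]
for real $s$. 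The goal is to show $\sum_i (\zeta_{\widetilde{S}_i}(s)-1) < \infty$ for large real $s$. Let $d_i$ be the minimal degree of a non-principal character of $\widetilde{S}_i$. Then $\#\{ i \mid d_i \le n\} = \widetilde{m}_n(G) \le C n^c$, so the $d_i$ grow polynomially in a summable sense: $\sum_i d_i^{-t} < \infty$ for $t > c$, by partial summation.

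The main obstacle is controlling $\zeta_{\widetilde{S}_i}(s) - 1$ in terms of $d_i$ uniformly in $i$. I would prove a bound
\[
  \zeta_{\widetilde{S}}(s) - 1 \le K\, d^{-(s - s_0)}
\]
valid for all quasi-simple $\widetilde{S}$, with constants $K, s_0$ absolute and $d = d(\widetilde{S})$ the minimal non-trivial degree. Granting it, we get $\sum_i(\zeta_{\widetilde{S}_i}(s)-1) \le K \sum_i d_i^{-(s-s_0)} < \infty$ for $s > s_0 + c$, hence $\alpha(\widetilde{G}) < \infty$.

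To obtain the uniform bound, split by type. For the finitely many sporadic groups and exceptional covers it is trivial. For alternating groups, use \cref{pro:cover-alt} together with $d \asymp m$. For groups of Lie type, use Liebeck--Shalev \cite{LS05}: $\zeta_{\widetilde{S}}(s) - 1 = O(q^{-(s - s_0)\cdot h})$ for $s$ above an absolute threshold, where $q^{h}$ is comparable to the Landazuri--Seitz minimal degree. Alternatively, one can use the crude estimate
\[
  \zeta(s) - 1 \le d^{-(s - 2)} \sum_\chi \chi(1)^{-2} \le d^{-(s-2)}\, k(\widetilde{S}),
\]
combined with $k(\widetilde{S}) \le |\widetilde{S}|$, which is at most a fixed power of $d$ by Landazuri--Seitz-type bounds. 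This version needs only that $\log|\widetilde{S}| = O(\log d)$ uniformly. That holds for groups of Lie type and for sporadic groups, but fails for alternating groups; those are handled separately as above via \cref{pro:cover-alt}, or via Liebeck--Shalev's estimate $\zeta_{\Alt(m)}(s) - 1 = O(m^{-s})$ for $s>0$.
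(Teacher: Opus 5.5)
Your proof is correct (provided you take the Liebeck--Shalev route for groups of Lie type), and for sufficiency it follows a genuinely different route from the paper. Necessity and the reduction to $\widetilde{G}$ via \cref{cor:m(n)-polynomial-iff-cover} are the same as in the paper.

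\textbf{How the two sufficiency arguments differ.} The paper counts combinatorially:
\begin{itemize}
\item an $n$-dimensional irreducible representation of $\widetilde{G}$ is a tensor product over at most $\log_2 n$ factors;
\item by Lubotzky--Martin there are at most $n^{\mu}$ ordered factorisations of $n$;
\item each factorisation allows at most $n^b$ choices of indices;
\item what remains is a polynomial bound on $r_n(\widetilde{S})$, uniform over all quasi-simple $\widetilde{S}$, obtained case by case from Liebeck--Shalev.
\end{itemize}
You instead bound the Euler product by $\exp\bigl(\sum_i(\zeta_{\widetilde{S}_i}(s)-1)\bigr)$ and let partial summation against $\widetilde{m}_n(G)$ do the counting. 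This avoids the factorisation lemma. It also gives an explicit bound $\alpha(\widetilde{G}) \le s_0 + b$ whenever $\widetilde{m}_n(G) = O(n^b)$.

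\textbf{The common input.} Both proofs rest on the same uniform input. Your key estimate follows from the trivial inequality
\[
  \zeta_{\widetilde{S}}(s) - 1 \le d^{-(s-s_0)} \bigl( \zeta_{\widetilde{S}}(s_0) - 1 \bigr) \qquad \text{for } s \ge s_0 .
\]
So all you need is $\sup_{\widetilde{S}} \zeta_{\widetilde{S}}(s_0) < \infty$ for one fixed $s_0$. Up to adjusting exponents, this is equivalent to the paper's uniform bound $r_n(\widetilde{S}) \le n^{c}$, and it comes from the same sources:
\begin{itemize}
\item \cite[Corollary~1.4]{LS05} for groups of Lie type, covering fixed type with $q \to \infty$ and growing rank as in \cref{cor:upper-bound-unbounded-rank};
\item \cref{pro:cover-alt} for $2.\!\Alt(m)$;
\item finiteness for the remaining cases.
\end{itemize}

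\textbf{Your ``crude'' alternative is wrong as stated.} The claim $\log \lvert \widetilde{S} \rvert = O(\log d)$ fails for groups of Lie type of unbounded rank. For $S = \SL_\ell(\mathbb{F}_2)$ one has $\log \lvert S \rvert \asymp \ell^2$ but $d = 2^\ell - 2$, so $\log \lvert S \rvert / \log d \asymp \ell$. The paper's estimate $\ell \log_q \chi(1) \le 4 \log_q \lvert S \rvert$, in the proof of \cref{thm:growing-ranks-2-gens}, reflects exactly this. Bounding $k(\widetilde{S})$ by $\lvert \widetilde{S} \rvert$ therefore handles only bounded Lie rank. For growing rank you need either the Liebeck--Shalev input, or a class-number bound polynomial in $q^{\ell}$ (of Liebeck--Pyber type) in place of $k(\widetilde{S}) \le \lvert \widetilde{S} \rvert$.

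Your first formulation of the Liebeck--Shalev bound is only a loose paraphrase. The reduction above shows you need nothing sharper than uniform boundedness of $\zeta_{\widetilde{S}}(s_0)$.
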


\begin{proof}
  By considering finite dimensional irreducible representations of $G$
  that factor through individual factors $S_i$ of
  $G/\mathrm{Z}(G) \cong \prod_{i \in \N} S_i$, we already
  obtain
  \[
    R_n(G)\ge R_n(G/\mathrm{Z}(G)) \ge m_n(G) \qquad \text{for all
      $n \in \N$,}
  \]
  and we conclude: if $G$ has \PRG{} then 
  $m_n(G)$ grows at most polynomially in~$n$.
  
  Conversely, suppose that $m_n(G)$ is polynomially bounded in $n$.
  Since $G$ is a quotient of its universal covering group
  $\widetilde{G} =\prod_{i \in \N}\widetilde{S_i}$, where
  $\widetilde{S_i}$ is a universal covering of $S_i$ for each
  $i \in \N$, it suffices to bound the representation growth of
  $\widetilde{G}$.  Moreover, using
  \cref{cor:m(n)-polynomial-iff-cover}, there exists $b \in \N$ such
  that $\widetilde{m}_n(G)\le n^b$ for all $n \in \N$.
  
  Let $n \in \N$.  Up to isomorphism, every $n$-dimensional
  irreducible representation~$\rho$ of~$\widetilde{G}$ factors through
  a finite quotient $\prod_{i \in I} \widetilde{S_i}$, for
  $I \subseteq \N$ with $\lvert I \rvert \le \log_2(n)$, and takes the
  form of a pullback of the tensor product
  $\boxtimes_{i \in I} \rho_i$ of non-trivial irreducible
  representations $\rho_i$ of $\widetilde{S_i}$ such that
  $d_i = \dim(\rho_i) \ge 2$, for each $i \in I$, and
  $n = \dim \rho = \prod_{i \in I} d_i$.  By~\cite[Lemma~4.7]{LM04},
  there is a constant $\mu \in \N$, independent of~$n$, such that the
  number of ordered factorizations of~$n$, i.e., finite tuples
  $\mathbf{d} = (d_j)_{j=1}^\ell$ with components in $\N_{\ge 2}$ such
  that $n = \prod_{j =1}^\ell d_j$, is bounded above by~$n^{\mu}$.
  Moreover, given any such factorization
  $\mathbf{d} = (d_j)_{j=1}^\ell$, for each
  $j \in \{1, \ldots, \ell \}$ the number of $i \in \N$ such that
  $\widetilde{S_i}$ has a representation of dimension~$d_j$ is at most
  $\widetilde{m}_{d_j}(G) \le d_j^{\, b}$.  Hence, the number of
  choices $(i_1,\dots,i_\ell) \in \N^\ell$ for matching indices, and
  hence for suitable subsets $I \subseteq \N$ of
  size~$\ell \le \log_2(n)$, is bounded
  by~$\prod_{j=1}^\ell d_j^{\, b} = n^b$.

  Thus it suffices to prove that $r_n(\widetilde{S})$ is polynomially
  bounded in~$n$, across all non-abelian finite simple groups~$S$.
  Clearly, it is always possible to accommodate for finitely many such
  groups with somewhat exceptional behaviour.  In particular, we do
  not need to discuss the sporadic groups, the Tits group, small
  Suzuki and Ree groups whose Schur multipliers do not vanish or
  simple groups of Lie type with non-generic Schur multipliers.  The
  following cases remain and can be dealt with as described:

  (a) Suzuki and Ree groups with trivial Schur multiplier.  The
  required polynomial bound can be established by using
  Deligne--Lusztig theory: the irreducible character degrees and their
  multiplicities can be expressed by polynomials in one and the same
  parameter~$q$.  Concrete estimates are given
  in~\cite[Table~1]{LS05},\footnote{The information provided
    in~\cite[Table~1]{LS05} is not entirely clear; for instance, it
    appears that some unipotent characters of small degree are not
    accommodated for; compare with \cite[Tables 4.15 and~4.16]{GM20}.
    But this does not change the bigger picture.}  based on
  \cite{DeLi85}.

  (b) Alternating groups or finite simple groups of Lie type arising
  from~\cref{thm:tits}.  Due to \cref{cor:upper-bound-unbounded-rank},
  it suffices to deal with finite simple groups of Lie type and of
  uniformly bounded Lie rank.  This allows for only finitely many Lie
  types and we may even prescribe the Lie type~$\lambda$.  In this
  situation, $\widetilde{S} = L_\lambda(q)$ for some prime power~$q$
  and \cite[Corollary~1.4~(i)]{LS05} yields, for every
  $\sigma \in \R_{>1}$,
  \[
    \zeta_{L_\lambda(q)}(\sigma) \to 1 \qquad \text{for
      $q \to \infty$.}
  \]
Taking $\sigma = 3/2$, this yields, for all but finitely
    many~$q$, the bound $r_n(\widetilde{S}) \le n^2$ for all
    $n \in \N$.  As explained above, we can accomodate for the
    finitely many exceptions, arising for small values of $q$ and the
    given Lie type~$\lambda$.
\end{proof}

Extending the proof of \cite[Theorem~6.4]{Gar16}, we obtain the next
result, which is reminiscent of a similar phenomenon in subgroup
growth, namely: every profinite group with polynomial subgroup growth
is finitely generated; compare with \cite[Theorem~10.6]{LS03}.

\begin{proposition} \label{thm:PRG-then-fg} Let $G$ be a
  quasi-semisimple profinite group with \PRG{}.  Then~$G$ is finitely
  generated.
\end{proposition}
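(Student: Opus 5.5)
We reduce to the semisimple quotient and then count generators through Wiegold's bound.

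\emph{Reduction.} Since $G$ is perfect, $\mathrm{Z}(G)$ lies in the Frattini subgroup of every finite quotient of~$G$ by \cref{pro:center-fratt}. Passing to the inverse limit, $\mathrm{Z}(G)\subseteq\mathrm{Frat}(G)$, so $d(G)=d(G/\mathrm{Z}(G))$. The group $G/\mathrm{Z}(G)$ has \PRG{} whenever $G$ does, being a quotient. It therefore suffices to treat a semisimple group $G=\prod_{i\in\N}S_i$; the finite case is trivial. Group the factors by isomorphism type and write $G\cong\prod_{S}S^{\,k_S}$, where $S$ runs over pairwise non-isomorphic non-abelian finite simple groups. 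Each $k_S$ is finite by \cref{pro:characterise-rigid}, since \PRG{} implies representation rigidity.

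\emph{Local bound.} For a finite product of pairwise non-isomorphic simple groups the generator count is governed by the worst isotypic component: a standard Goursat/Gaschütz argument shows that a tuple generates $\prod_S S^{k_S}$ as soon as its projection to each $S^{k_S}$ generates. Hence
\[
d(G)=\sup_S d\bigl(S^{\,k_S}\bigr),
\]
taking finite subproducts and passing to the limit. We then need $d(S^{k_S})$ to be uniformly bounded. By \cref{thm:Wiegold-bound} and monotonicity, $d(S^{k})\le b+2$ whenever $k\le|S|^b$. So it suffices to find $b$ with
\[
k_S\le|S|^b\qquad\text{for all } S.
\]

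\emph{Main step.} We bound $k_S$ using \PRG{}. Let $n_S$ be the minimal degree of a non-trivial irreducible character of~$S$. Then $m_{n_S}(G)\ge k_S$, so \cref{thm:PRG-iff-m(n)-polynomial} gives $k_S\le m_{n_S}(G)\le C\,n_S^{\,c}$ for constants $C,c$. Since $n_S<|S|^{1/2}$, this yields $k_S\le C|S|^{c/2}\le|S|^b$ for a suitable $b$, because $|S|\ge 60$. Consequently $d(G)\le b+2$.

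The delicate point is the claim that $d$ of a product of non-isomorphic isotypic blocks is the maximum over the blocks. I would prove it directly: if $x_1,\dots,x_d$ generate every $S^{k_S}$ modulo projection, then the closed subgroup $H$ they generate projects onto each block. A proper subdirect product of the finite product would then have to identify two simple factors via an isomorphism, which is impossible for non-isomorphic types. Alternatively, one can invoke \eqref{equ:number-of-gemn}, that is, Wiegold's framework for finite groups, applied to finite subproducts.
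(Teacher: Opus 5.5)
Your proposal is correct and follows the paper's proof. In both, $\mathrm{Z}(G)\subseteq\mathrm{Frat}(G)$ reduces to the semisimple case, each multiplicity $k_S$ is bounded polynomially in $\lvert S\rvert$ via $m_n(G)$ and \cref{thm:PRG-iff-m(n)-polynomial}, and \cref{thm:Wiegold-bound} is applied blockwise before taking the supremum. The only differences are cosmetic: you use $n_S<\lvert S\rvert^{1/2}$ where the paper uses a nontrivial degree below $\lvert S\rvert$, and you justify $d(G)=\sup_S d(S^{k_S})$ by a Goursat-type argument where the paper cites Wilson.
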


\begin{proof}
  By \cref{pro:center-fratt}, the central elements of $G$ are
  `non-generators', i.e., contained in the Frattini subgroup.  Hence
  we may assume without loss of generality that $\mathrm{Z}(G) = 1$,
  i.e., that $G$ is semisimple:
  \[
    G \cong \prod\nolimits_{S \in \mathcal{S}} S^{\nu(S)},
   \]
   where $\mathcal{S}$ is a set of representatives of the isomorphism
   classes of non-abelian finite simple groups and $\nu(S)$ denotes
   the multiplicity of $S$ as a composition factor of~$G$.  By
   \cref{thm:PRG-iff-m(n)-polynomial}, there exists $b \in \N$ such
   that $m_n(G)\le n^b$ for all $n \in \N$.  Each $S \in \mathcal{S}$
   has a non-trivial irreducible representation of dimension less than
   $\lvert S \rvert$, and we deduce that
   $\nu(S) \le m_{\lvert S \rvert}(G) \le \lvert S \rvert^b$.  By
   \cref{thm:Wiegold-bound}, the group $S^{\nu(S)}$ is generated by
   $b+2$ elements.  This implies that the profinite group $G$ is
   generated by $b+2$ elements; compare with~\cite[Proposition~4.1.1
   and Lemma~4.1.5]{Wil98}.
\end{proof}

\begin{example}
  By \cref{thm:Wiegold-bound} and
  \cref{thm:PRG-iff-m(n)-polynomial}, the semisimple profinite group
  $G = \prod_{n \ge 5} \Alt(n)^{n!/2}$ is generated by $3$ elements,
  but does not have \PRG{}.
\end{example}

Gathering our results up to this point, we obtain a proof of
Theorem~\ref{thm:criteria-rigid-PRG}.  We conclude the section with a
refinement of our analysis in the proof of
\cref{thm:PRG-iff-m(n)-polynomial}.  This yields the main part
of~\cref{thm:growing-ranks-2-gens}.

\begin{proof}[Proof of the first part of \cref{thm:growing-ranks-2-gens}]
  Due to \cref{pro:center-fratt}, there is no harm in working modulo
  $\mathrm{Z}(G)$, and consequently we suppose that $G$ is a
  semisimple profinite group.  We choose a set of representatives
  $\mathcal{S}(G)$ for the isomorphism classes of non-abelian finite
  simple groups that occur as composition factors of~$G$.  This gives
  \begin{equation} \label{equ:G-as-cart-product} G \cong
    \prod\nolimits_{S \in \mathcal{S}(G)} S^{\nu(S)},
  \end{equation}
  where $\nu(S) \in \N$ denotes the multiplicity as a composition
  factor for each $S \in \mathcal{S}(G)$.  Moreover, refining the
  product in \eqref{equ:G-as-cart-product}, we can write $G$ as a
  cartesian product of simple factors, and the open normal subgroups
  $N \trianglelefteq_\mathrm{o} G$ are precisely the subproducts over
  all but finitely many simple factors.

  As $G$ has growing Lie ranks, we have
  $\rk_{\mathrm{Lie}}(S) \to \infty$ for $S \in \mathcal{S}(G)$ as
  $\lvert S \rvert \to \infty$ and, passing to an open subgroup if
  necessary, we may suppose that $\rk_{\mathrm{Lie}}(S) > 8$ for every
  $S \in \mathcal{S}(G)$ so that each $S$ is an alternating group or a
  classical group.  Furthermore, the minimal number of generators of
  $G$ is given by
  \[
    d(G) = \sup \bigl\{ d \bigl( S^{\nu(S)} \bigr) \mid S \in
    \mathcal{S}(G) \bigr\},
  \]
  and our description of open normal subgroups of $G$ shows that
  $d(N) \le d(G)$ for all $N \trianglelefteq_\mathrm{o} G$; compare
  with~\cite[Lemma~4.1.5]{Wil98}.  Thus it suffices to prove that
  \[
    d \bigl( S^{\nu(S)} \bigr) \le 2 \qquad \text{for all but finitely
      many $S \in \mathcal{S}(G)$.}
  \]

  At this stage we require some facts and theorems about generating
  pairs of finite alternating and classical finite simple groups~$S$,
  some of which rely on the CFSG.  It is easily seen that $\Aut(S)$
  acts faithfully on the set
  $\mathrm{GP}(S) = \{ (x,y) \in S^2 \mid S = \langle x,y \rangle \}$.
  As finite simple groups are $2$-generated, this yields
  \[
    d(S^{\nu(S)}) \le d(S) = 2 \qquad \text{as long as}
    \quad \nu(S) \le \lvert \mathrm{GP}(S) \rvert / \lvert \Aut(S)
    \rvert;
  \]
  compare with \cite[Corollary~7]{KaLu90}.  By \cite{Di69,KaLu90}, the
  probability of generating the alternating or classical finite simple
  group $S$ by two random elements tends to $1$ as $\lvert S \rvert$
  tends to infinity.  This means that
  $\lvert \mathrm{GP}(S) \rvert / \lvert S \rvert^2 \to 1$ as
  $\lvert S \rvert \to \infty$.  Furthermore, it is known that
  $\lvert \Aut(S) \rvert / \lvert S \rvert \to 1$ as
  $\lvert S \rvert \to \infty$; compare with \cite{St60}.  Hence it
  suffices to prove that
  \begin{equation} \label{equ:claim-for-nu}
    \nu(S) \le \tfrac{1}{2} \lvert S \rvert \qquad \text{for all but
      finitely many $S \in \mathcal{S}(G)$.}
  \end{equation}
  Clearly, it is enough to establish the claim separately for
  alternating groups and for classical finite simple groups.  We argue
  by contradiction.  For this we assume that at least one of the
  groups
  \[
    G_{\mathrm{alt}} = \prod_{\substack{S \in \mathcal{S}(G) \text{
          alternating} \\ \text{with } \nu(S) > \frac{1}{2} \lvert S
        \rvert}} S^{\nu(S)} \qquad \text{and} \qquad G_{\mathrm{cla}}
    = \prod_{\substack{S \in \mathcal{S}(G) \text{ classical} \\
        \text{with } \nu(S) > \frac{1}{2} \lvert S \rvert}} S^{\nu(S)}
  \]
  is infinite.  Since $G_{\mathrm{alt}}$ and $G_{\mathrm{cla}}$ are
  homomorphic images of~$G$, we reach the required contradiction, once
  we show that one of them  does not have \PRG{}.
  
  \smallskip
  
  \noindent \textit{Case 1:} $G_{\mathrm{alt}}$ is infinite.  For
  every $n \in \N$ with $n \ge 5$, the alternating group $S = \Alt(n)$
  has a non-principal irreducible character $\chi$ of degree
  $\chi(1)=n-1$.  Hence a $k$-fold direct power $S^k$ has at least $k$
  irreducible characters of degree at most~$n$, and for
  $\nu(S) > \frac{1}{2} \lvert S \rvert = \frac{1}{4} n!$ this gives
  $R_n(G) \ge R_n(S^{\nu(S)}) \ge \frac{1}{4} n!$ which grows faster
  than polynomially in~$n$.  Thus $G_{\mathrm{alt}}$ does not have
  \PRG{}.

  \smallskip

  \noindent \textit{Case 2:} $G_{\mathrm{cla}}$ is infinite.  We claim
  that $\alpha(G_{\mathrm{cla}}) \ge m/4$ for every $m \in \N$ so that
  $G_{\mathrm{cla}}$ cannot have \PRG{}.

  Let us consider a classical finite simple group $S = S_\lambda(q)$
  with associated root system~$\Phi$ of rank~$\ell$, say.  For our
  purposes the following upper bound for the minimal degree of a
  non-principal irreducible character of~$S$ is good enough: $S$ has a
  non-principal irreducible character $\chi$ satisfying
  \begin{equation} \label{equ:l-chi-order-claim}
    \ell \, \log_q(\chi(1)) \le 4 \, \log_q \lvert S \rvert.
  \end{equation}
  Indeed, if $S$ is of Chevalley type $\mathsf{A}_\ell$, respectively
  ${}^2 \mathsf{A}_\ell$, then
  $S \cong \mathsf{PSL}_{\ell+1}(\mathbb{F}_q)$, respectively
  $S \cong \mathsf{PSU}_{\ell+1}(\mathbb{F}_q)$.  Accordingly $S$ acts
  faithfully on the projective space
  $\mathbbm{P}^{\ell }(\mathbb{F}_q)$, respectively
  $\mathbbm{P}^{\ell }(\mathbb{F}_{q^2})$.  This permutation
  representation of degree
  $\lvert \mathbbm{P}^{\ell }(\mathbb{F}_q) \rvert$, respectively
  $\lvert \mathbbm{P}^{\ell }(\mathbb{F}_{q^2}) \rvert$, gives rise to
  a complex linear representation of dimension at most $q^{2\ell+2}$
  which in turn has an non-principal irreducible constituent.  By
  considering the order of a Sylow subgroup, we obtain
  $\lvert S \rvert \ge q^{\lvert \Phi^+ \rvert} = q^{\ell
    (\ell+1)/2}$. Hence \eqref{equ:l-chi-order-claim} holds in these
  cases. When $S$ is of Chevalley type $\mathsf{B}_\ell$,
  $\mathsf{C}_\ell$ or $\mathsf{D}_\ell$, the group $S$ is of
  symplectic or orthogonal type and embeds into
  $\mathsf{PSL}_k(\mathbb{F}_q)$ for $k \in \{2\ell, 2\ell +1\}$; for
  a finer analysis see~\cite{Co78}.  Hence $S$ has a non-principal
  irreducible character $\chi$ such that $\chi(1) \le q^{2\ell + 1}$.
  With the lower bound
  $\lvert S \rvert \ge q^{\lvert \Phi^+ \rvert} \ge q^{(\ell -1)\ell}$
  we see that \eqref{equ:l-chi-order-claim} holds also in these cases.

  Let $\rho$ be a non-trivial irreducible representation of $S$ with
  character $\chi$ satisfying~\eqref{equ:l-chi-order-claim}, and let
  $k, m \in \N$ be such that $\min \{k, \ell\} \ge m$.  The
  representation $\rho^{\boxtimes m}$ of the $m$-fold direkt power
  $S^m$ has dimension
  $\chi(1)^m \le \chi(1)^\ell \le \lvert S \rvert^4$.  Furthermore,
  the irreducible representations of the group $S^k$ that factor
  through one of the $\binom{k}{m}$ quotients isomorphic to $S^m$ and
  arise as pullbacks of $\rho^{\boxtimes m}$ are distinct.  This gives
  rise to the estimate
  \[
    R_{n(S)}(S^k) \ge \binom{k}{m} \ge \frac{(k-m)^m }{m !}, \quad
    \text{where we write $n(S) = \lvert S \rvert^4$ for `crispness'.}
  \]

  Now we fix $m \in \N$ and obtain, subject to the conditions
  $\nu(S) > \frac{1}{2} \lvert S \rvert$ and
  $\ell = \rk_{\mathrm{Lie}}(S) \ge m$,
  \[
    \frac{\log (R_{n(S)} (G))}{\log (n(S))} \ge \frac{\log \bigl(R_{n(S)} (
      S^{\nu(S)}) \bigr)}{4 \log (\lvert S \rvert)} \ge \frac{m \log \bigl(
      \tfrac{1}{2} \lvert S \rvert - m \bigr) - \log(m !)}{4 \log \lvert S
      \rvert} \to \frac{m}{4} \quad \text{as
      $\lvert S \rvert \to \infty$.}
  \]
  From this we deduce that $\alpha(G_{\mathrm{cla}}) > m/4$ as
  required.
\end{proof}

% The second part of \cref{thm:growing-ranks-2-gens} is justified
% by \cref{exa:prod-of-SL2-with-presc-gens}.
  
%%%

\section{Approximating representation zeta functions of \\ cartesian
  products of groups} \label{sec:Aproox}

In this section we establish the remaining part of
\cref{thm:growing-ranks-2-gens} and
\cref{thm:unif-bounded-ranks-invariant-alpha}.  From
\cite[Section~2]{AKOV16} we recall a specific technique for
approximating representation zeta functions by means of suitable
infinite products of Dirichlet polynomials.  In this context the term
`Dirichlet polynomial' refers to a Dirichlet series
$\sum_{n=1}^\infty c_n n^{-s}$ with only finitely many non-zero
coefficients $c_n \in \C$; the representation zeta functions of finite
groups yield typical examples of interest to us.  For Dirichlet
polynomials $f, g$ with non-negative integer coefficients and
for~$C \in \R$ we write $f  \sim_C g$ if
\[
  f(\sigma) \le C^{1+\sigma} g(\sigma) \quad \text{and} \quad
  g(\sigma) \le C^{1+\sigma} f(\sigma) \qquad \text{for every
    $\sigma \in \R_{>0}$.}
\]
We denote by $\mathcal{A}^+$ the collection of all finite subsets
$\boldsymbol{a} \subset \N_0 \times \N$.  For
$\boldsymbol{a} \in \mathcal{A}^+$ and $q\in \N_{\ge 2}$ we define the
`basic' Dirichlet polynomial
\begin{equation}
  \label{equ:Dirichlet-polynominal}
  \xi_{\boldsymbol{a},q}(s)=\sum_{(m,n)\in \boldsymbol{a}}q^{m-ns}.
\end{equation}
For any choice of $\boldsymbol{a}_i \in \mathcal{A}^+$ and $q_i \in \N_{\ge 2}$,
for $i \in \N$, such that $q_i \to \infty$ as $i \to \infty$, the
product
\[
  \prod\nolimits_{i \in \N} \bigl( 1 + \xi_{\boldsymbol{a}_i,q_i} \bigr)
\]
can be regarded as a Dirichlet series and we denote its abscissa of
convergence by
\[
  \absc \Bigl( \prod\nolimits_{i \in \N} ( 1 + \xi_{\boldsymbol{a}_i,q_i} ) \Bigr).
\]

The following two auxiliary propositions follow easily, for instance,
from the discussion in \cite[Section~2]{AKOV16}.

\begin{proposition} \label{pro:abs-of-direct-product} The
  representation zeta function of a direct product $G \times H$ of two
  representation rigid profinite groups has abscissa of convergence
  $\alpha(G \times H) = \max \{ \alpha(G), \alpha(H) \}$.
\end{proposition}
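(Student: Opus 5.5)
The plan is to show that the irreducible representations of $G \times H$ are exactly the outer tensor products of those of the two factors, so that the zeta function factorises, and then to read off the abscissa of the product from the abscissae of the factors.

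First I will establish the factorisation
\[
  \zeta_{G\times H}(s) = \zeta_G(s)\,\zeta_H(s)
\]
as formal Dirichlet series with non-negative integer coefficients. Every continuous irreducible complex representation of $G\times H$ factors through a finite quotient. Such a quotient may be taken of the form $G/N \times H/M$, with $N$ open normal in $G$ and $M$ open normal in $H$, because the kernel is open and hence contains $N \times M$ for suitable $N$ and $M$. For finite groups, the irreducible representations of a direct product are precisely the outer tensor products $\rho\boxtimes\pi$, and non-isomorphic pairs give non-isomorphic products. Consequently
\[
  r_n(G\times H) = \sum_{ab=n} r_a(G)\, r_b(H).
\]
This is finite for every $n$, so $G \times H$ is again representation rigid.

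Next I compare abscissae, writing $\sigma$ for a real variable. Since all coefficients are non-negative, for real $\sigma$ the series $\zeta_{G\times H}(\sigma)$ converges if and only if the product $\zeta_G(\sigma)\zeta_H(\sigma)$ of two series with positive terms is finite. Each factor is at least $1$, because $r_1 \ge 1$ from the trivial representation. Hence the product is finite exactly when both factors are finite, that is, when $\sigma > \max\{\alpha(G),\alpha(H)\}$, up to the boundary value itself. For Dirichlet series with non-negative coefficients, the abscissa of convergence is the infimum of the real $\sigma$ at which the series converges. It follows that $\alpha(G\times H) = \max\{\alpha(G),\alpha(H)\}$.

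There is one wrinkle: the paper's convention for finite groups. If $G$ is finite, then $\zeta_G$ is a Dirichlet polynomial, whose natural abscissa is $-\infty$ (or one may assign it the value $0$). The argument above is unchanged under either convention, provided the product is not finite. The only genuinely delicate step is the bijection between irreducible representations of the product and pairs of irreducible representations of the factors in the profinite, continuous setting. This reduces to the finite case via the cofinality of the product subgroups $N\times M$, which I expect to be routine.
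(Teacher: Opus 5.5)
Your proof is correct and is essentially the argument the paper has in mind: the paper gives no written proof and simply refers to \cite[Section~2]{AKOV16}, where the factorisation $\zeta_{G\times H}=\zeta_G\,\zeta_H$ combined with the non-negativity of the coefficients (and $\zeta_G(\sigma),\zeta_H(\sigma)\ge 1$) gives the result as you describe. Your reduction to finite quotients $G/N\times H/M$ and your handling of the finite-group convention are both fine.
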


\begin{proposition} \label{pro:auxiliary-result-for-alpha} Let $G$ be
  a profinite group that decomposes as a cartesian product
  $G = \prod_{i \in \N} G_i$ of finite groups, and suppose that
  $H \le G$ is a subgroup of the form $H = \prod_{i \in \N} H_i$,
  where $H_i \le G_i$ with $H_i = [H_i,H_i]$ for each $i \in \N$.

  Suppose that there are $C \in \R$ as well as $\boldsymbol{a}_i \in \mathcal{A}^+$
  and $q_i \in \N_{\ge 2}$, for $i \in \N$, such that
  $\zeta_{G_i} - \lvert G_i : [G_i,G_i] \rvert \sim_C \xi_{\boldsymbol{a}_i,q_i}$.
  Suppose further that $q_i \to \infty$ as $i \to \infty$ and
  that $\lvert G_i : H_i \rvert \le C$ for all $i \in \N$.

  Then the representation zeta function of $H$ has abscissa of
  convergence
  \[
    \alpha(H) = \absc\Bigl( \prod\nolimits_{i \in \N} (1 + \xi_{\boldsymbol{a}_i,q_i} )
    \Bigr).
  \]
\end{proposition}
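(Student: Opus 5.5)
We compare the zeta function of $H$ termwise with the product of basic Dirichlet polynomials, first for $G$ and then for $H$. Since each $G_i$ is finite, only finitely many irreducible representations of $G$ have a given dimension once $q_i \to \infty$. For the general case, recall that the irreducible representations of a cartesian product $\prod_i G_i$ of finite groups are exactly the tensor products $\boxtimes_i \rho_i$ with $\rho_i$ trivial for almost all $i$. Hence, as formal Dirichlet series,
\[
  \zeta_H(s) = \prod\nolimits_{i \in \N} \zeta_{H_i}(s),
\]
and the same holds for $G$. Because $H_i$ is perfect, $\zeta_{H_i} = 1 + (\zeta_{H_i} - 1)$, where $\zeta_{H_i}-1$ counts exactly the non-trivial irreducible characters. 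So the task is to show
\[
  \zeta_{H_i} - 1 \sim_{C'} \xi_{\boldsymbol{a}_i,q_i}
\]
for a uniform constant $C'$, and then to pass from termwise comparison to the abscissa of the infinite product.

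For the termwise comparison I use Clifford theory and Frobenius reciprocity with $\lvert G_i : H_i \rvert \le C$. Every non-linear irreducible character $\chi$ of $G_i$ restricts to $H_i$ with constituents $\psi$ satisfying $\psi(1) \le \chi(1) \le C\psi(1)$. Conversely, every $\psi \in \Irr(H_i)$ lies under some $\chi \in \Irr(G_i)$ with $\chi(1) \le C\psi(1)$, and each $\chi$ lies over at most $C$ characters $\psi$. Nontrivial $\psi$ can lie under linear $\chi$ of $G_i$, and linear $\chi$ can restrict trivially. To deal with this, I note that $\zeta_{G_i} - \lvert G_i:[G_i,G_i]\rvert$ counts only the non-linear characters. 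The number of linear characters of $G_i$ is at most $\lvert G_i : H_i\rvert \le C$, since $H_i = [H_i,H_i] \le [G_i,G_i]$. A nontrivial $\psi$ of degree $d$ lying under a linear $\chi$ is impossible, because restricting a linear $\chi$ gives a linear character of the perfect group $H_i$, which is trivial. So nontrivial $\psi$ lie only under non-linear $\chi$, and non-linear $\chi$ have only nontrivial constituents, since a trivial constituent would force $\chi \mid \operatorname{Ind}\mathbbm{1}$, of degree at most $C$. These last $\chi$ need a little care: there are at most $C$ of them, each of degree at most $C$. Combining all this gives, for $\sigma > 0$,
\[
  \zeta_{H_i}(\sigma) - 1 \le C\cdot C^{\sigma}\bigl(\zeta_{G_i}(\sigma) - \lvert G_i:[G_i,G_i]\rvert\bigr)
\]
and the reverse inequality with a similar constant. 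Together with the hypothesis $\sim_C \xi_{\boldsymbol{a}_i,q_i}$, this yields $\zeta_{H_i}-1 \sim_{C'} \xi_{\boldsymbol{a}_i,q_i}$ with $C' = C^{3}$, say.

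The main obstacle is turning uniform termwise comparability into equality of abscissae for the infinite products, since the constant $C'^{1+\sigma}$ appears in every factor. I would use that the abscissa of $\prod_i(1+f_i)$ with $f_i \ge 0$ equals $\inf\{\sigma : \sum_i f_i(\sigma) < \infty\}$, via $\log(1+x)\le x$ and $\prod(1+f_i) \ge 1+\sum f_i$. Since $\sum_i f_i(\sigma)$ and $\sum_i \xi_{\boldsymbol{a}_i,q_i}(\sigma)$ differ by at most the factor $C'^{1+\sigma}$, they converge for the same $\sigma$. Monotonicity in $\sigma$ and convergence of Dirichlet series with non-negative coefficients at real points then give
\[
  \alpha(H) = \absc\Bigl(\prod\nolimits_i (1+\xi_{\boldsymbol{a}_i,q_i})\Bigr).
\]
The condition $q_i\to\infty$ makes each $\xi_{\boldsymbol{a}_i,q_i}$ supported on integers $\ge q_i$, so the formal product is a well-defined Dirichlet series.
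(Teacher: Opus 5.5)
Your overall plan is the natural one; the paper gives no argument of its own, only a pointer to \cite[Section~2]{AKOV16}. The upper bound $\zeta_{H_i}(\sigma)-1\le C^{1+\sigma}\bigl(\zeta_{G_i}(\sigma)-\lvert G_i:[G_i,G_i]\rvert\bigr)$ via Frobenius reciprocity is correct, and it needs no normality of $H_i$. The passage to abscissae via $\prod_i(1+f_i(\sigma))<\infty\iff\sum_i f_i(\sigma)<\infty$ is also fine.

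\textbf{The gap.} It sits exactly at the step you flag as needing ``a little care'' and then skip. A non-linear $\chi\in\Irr(G_i)$ lying over $\mathbbm{1}_{H_i}$ may lie over no non-trivial $\psi\in\Irr(H_i)$ at all. In that case nothing in $\zeta_{H_i}-1$ can dominate $\chi(1)^{-\sigma}$. Knowing that there are at most $C$ such $\chi$, each of degree at most $C$, does not help, because all terms of $\zeta_{H_i}-1$ may be tiny or absent. In fact your uniform reverse inequality, and with it the claim $\zeta_{H_i}-1\sim_{C'}\xi_{\boldsymbol{a}_i,q_i}$ for all $i$, is false as stated. Take $G_1$ the symmetric group of degree~$3$ and $H_1=1$. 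Then $\zeta_{G_1}-\lvert G_1:[G_1,G_1]\rvert=2^{-s}=\xi_{\{(0,1)\},2}$, so the hypothesis holds for $i=1$, while $\zeta_{H_1}-1=0$.

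\textbf{The fix.} You need to use the hypothesis together with $q_i\to\infty$; so far you use $q_i\to\infty$ only for well-definedness of the product.
\begin{enumerate}
\item[(i)] Let $M_i$ be the largest first coordinate occurring in $\boldsymbol{a}_i$. For every non-linear $\chi\in\Irr(G_i)$ and all $\sigma>0$, using $n\ge 1$, you get $\chi(1)^{-\sigma}\le C^{1+\sigma}\xi_{\boldsymbol{a}_i,q_i}(\sigma)\le C^{1+\sigma}\lvert\boldsymbol{a}_i\rvert\,q_i^{M_i-\sigma}$. Letting $\sigma\to\infty$ gives $\chi(1)\ge q_i/C$.
\item[(ii)] Hence, for the cofinitely many $i$ with $q_i>C^2$, no non-linear $\chi$ is a constituent of $(\mathbbm{1}_{H_i})^{G_i}$, since that character has degree at most $C$. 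So every non-linear $\chi$ lies over some non-trivial $\psi$.
\item[(iii)] For those $i$ you obtain $\zeta_{G_i}(\sigma)-\lvert G_i:[G_i,G_i]\rvert\le C\,(\zeta_{H_i}(\sigma)-1)$, and therefore $\zeta_{H_i}-1\sim_{C^2}\xi_{\boldsymbol{a}_i,q_i}$ for all $i\ge i_0$.
\item[(iv)] You must then add that deleting the finitely many factors with $i<i_0$ changes neither abscissa. Each such factor is a Dirichlet polynomial taking values in $[1,\infty)$ at real $\sigma$.
\end{enumerate}

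\textbf{Rigidity.} The same bound, $\psi(1)\ge\chi(1)/C\ge q_i/C^2$ for non-trivial $\psi\in\Irr(H_i)$, is what shows that $H$ is representation rigid, so that $\zeta_H=\prod_i\zeta_{H_i}$ makes sense. Your opening claim that $G$ has only finitely many representations of each dimension is false in general. Taking $G_i=H_i\times Z$ with $Z$ cyclic of order~$2$ satisfies all hypotheses, yet $G$ then has infinitely many linear characters. This does not matter for the conclusion, since only $H$ is needed.
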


We illustrate these ideas and justify the remaining part of
\cref{thm:growing-ranks-2-gens}.

\begin{example} \label{exa:prod-of-SL2-with-presc-gens} The complex representations of the special linear group
  $\SL_2(\mathbb{F}_q)$ over a finite field $\mathbb{F}_q$ are well
  understood; for instance, see \cite[Section~12.5]{DM20}. In
  particular, we have
  $\zeta_{\SL_2 (\mathbb{F}_q)}(s) - 1 \sim_2 q^{1-s}$ for suﬃciently
  large~$q$.

  Let $d \in \N$ with $d \ge 3$.  Then \cref{thm:Wiegold-bound}
  implies that the quasi-semisimple profinite group
  \[
    G = \prod_{p \in \mathbb{P}_{\ge 3}}
    \SL_2(\mathbb{F}_p)^{m(p)}, \quad \text{where} \;\; m(p)
    = \lvert \SL_2(\mathbb{F}_q) \rvert^{d-2} = \bigl(
    (p^3-p)/2 \bigr)^{d-2},
  \]
  requires $d$ generators, and so does every open normal subgroup
  $N \trianglelefteq_\mathrm{o} G$.  Furthermore,
  \cref{pro:auxiliary-result-for-alpha}, for $G = H$, yields that $G$
  has \PRG{} with
  \begin{multline*}
    \alpha(G) = \absc\Bigl( \prod\nolimits_{p \in \mathbb{P}_{\ge 3}}
    (1 + p^{1-s})^{ ( (p^3-p)/2 )^{d-2}} \Bigr) = \absc\Bigl(
    \prod\nolimits_{p \in \mathbb{P}_{\ge 3}} (1 + p^{3d-5-s}) \Bigr)
    \\
    = 3d-4.
  \end{multline*}
\end{example}

Next we recall \cite[Theorem~3.1]{AKOV16}, in a specialised form that
fits our setting and aims.  The notion of a Lie type was described in
\cref{def:lie-type-basic}.  Furthermore, for an irreducible root
system~$\Phi$, let $\boldsymbol{\Lambda}_\Phi$ denote the collection
of Lie types $\lambda$ such that~$\Phi_\lambda \cong \Phi$.  Finally,
let $\mathcal{Q}$ denote the set of all prime powers.

\begin{theorem}[Avni, Klopsch, Onn, Voll] \label{thm:approximation}
  Let $\Phi$ be an irreducible root system.  Then there exist
  $C \in \R$, a finite set $\boldsymbol{a}(\Phi)\in \mathcal{A}^+$,
  and subsets
  $\boldsymbol{a}(\lambda,q) \subseteq \boldsymbol{a}(\Phi)$ for
  $(\lambda,q)\in \boldsymbol{\Lambda}_\Phi\times \mathcal{Q}$ whose
  union is $\boldsymbol{a}(\Phi)$ such that the following holds.

  For every $q \in \mathcal{Q}_{>3}$ and every finite group
  $\mathbf{G}^F\!\!$ of Lie type
  $\lambda \in \boldsymbol{\Lambda}_\Phi$, where $\mathbf{G}$ is a
  connected simple $\overline{\mathbb{F}_q}$-algebraic group and $F$
  is a Frobenius endomorphism defining an $\mathbb{F}_q$-structure, we
  have
  \[
    \zeta_{\mathbf{G}^F} - \big\vert \mathbf{G}^F :
      [\mathbf{G}^F, \mathbf{G}^F] \big\vert \, \sim_C \,
    \xi_{\boldsymbol{a}(\lambda,q),q}.
  \]
  Moreover,
  $(\rk(\Phi), \lvert \Phi^+ \rvert) \in \boldsymbol{a}(\Phi)$ and
  \[
    \zeta_{\mathbf{G}^F} \, \sim_C \, 1 + q^{\rk(\Phi) - \lvert \Phi^+
      \rvert s}.
  \]
\end{theorem}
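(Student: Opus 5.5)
The plan is to derive both assertions from Lusztig's Jordan decomposition of characters, using that every quantity involved is a polynomial in $q$ whose shape depends only on $\Phi$ and on finitely many combinatorial data.

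\textbf{Step 1: the parametrisation.} Let $\mathbf{G}^*$ be the dual group with dual Frobenius $F^*$. The set $\Irr(\mathbf{G}^F)$ is a disjoint union of Lusztig series $\mathcal{E}(\mathbf{G}^F,s)$, indexed by $\mathbf{G}^{*F^*}$-classes of semisimple elements $s \in \mathbf{G}^{*F^*}$. When $\mathrm{C}_{\mathbf{G}^*}(s)$ is connected, $\mathcal{E}(\mathbf{G}^F,s)$ is in bijection with the unipotent characters $\psi$ of $\mathrm{C}_{\mathbf{G}^*}(s)^{F^*}$, with
\[
  \chi(1) = \lvert \mathbf{G}^{*F^*} : \mathrm{C}_{\mathbf{G}^*}(s)^{F^*} \rvert_{p'} \, \psi(1).
\]
When $\mathbf{G}$ is not simply connected, the centraliser may be disconnected. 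Its component group has order bounded in terms of $\Phi$, so this changes multiplicities and degrees only by a bounded factor, which is absorbed into $C$.

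\textbf{Step 2: grouping by type.} I would sort the classes $s$ by the "type" of their centraliser: the $F^*$-class of the pair (root subsystem $\Phi_s \subseteq \Phi$, twisted action on it). There are finitely many types, bounded in terms of $\Phi$ alone, and this finite collection will be $\boldsymbol{a}(\Phi)$. For a fixed type:
\begin{itemize}
  \item[(i)] The number of classes $s$ of that type is, up to a bounded factor, $q^{m}$ with $m = \dim \mathrm{Z}(\mathrm{C}_{\mathbf{G}^*}(s))^\circ = \rk(\Phi) - \rk(\Phi_s)$. This comes from counting points on the corresponding stratum of $\mathbf{T}^*/W$.
  \item[(ii)] The index $\lvert \mathbf{G}^{*F^*} : \mathrm{C}^{F^*} \rvert_{p'}$ is a polynomial in $q$ of degree $\lvert \Phi^+ \rvert - \lvert \Phi_s^+ \rvert$ with bounded coefficients.
  \item[(iii)] Unipotent character degrees are polynomials in $q$ of bounded degree and bounded coefficients, whose leading terms are controlled by Lusztig's families.
\end{itemize}
Each pair (type, unipotent character) therefore contributes $q^{m-ns}$ up to a factor $C^{1+\sigma}$, where $n = \lvert \Phi^+ \rvert - \lvert \Phi_s^+ \rvert + \deg \psi(1)$. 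A type is realised for the given $(\lambda,q)$ only if suitable congruence conditions on $q$ hold; for instance the centre depends on $\gcd(q-1,\cdot)$. This is what produces the subsets $\boldsymbol{a}(\lambda,q) \subseteq \boldsymbol{a}(\Phi)$. The linear characters are exactly the pairs with $n=0$, and these are removed by subtracting $\lvert \mathbf{G}^F : [\mathbf{G}^F,\mathbf{G}^F] \rvert$. The hypothesis $q>3$ is used to keep the "polynomial in $q$ up to constants" estimates uniform: $q$ is then large enough that lower-order terms do not cancel leading terms, as happens for $q \le 3$.

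\textbf{Step 3: the "moreover" part.} Regular semisimple $s$ with torus centraliser give $m = \rk(\Phi)$ and $n = \lvert \Phi^+ \rvert$, so this pair lies in $\boldsymbol{a}(\Phi)$. To obtain
\[
  \zeta_{\mathbf{G}^F} \sim_C 1 + q^{\rk(\Phi) - \lvert \Phi^+ \rvert s},
\]
it suffices, by convexity in $\sigma$, to show for every pair $(m,n)$ with $n>0$ that $m \le \rk(\Phi)$ and $n \ge m\,\lvert \Phi^+ \rvert / \rk(\Phi)$. Then $m - n\sigma \le \max\{0,\ \rk(\Phi) - \lvert \Phi^+ \rvert \sigma\}$, which gives the required comparison for all $\sigma>0$.

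The first inequality is immediate from $m = \rk(\Phi) - \rk(\Phi_s)$. For the second, write $h$ for the Coxeter number of $\Phi$, so $\lvert \Phi^+ \rvert = h\,\rk(\Phi)/2$. Decompose $\Phi_s$ into irreducible components with Coxeter numbers $h_i \le h$. Then $\lvert \Phi_s^+ \rvert = \sum_i h_i \rk_i/2 \le h(\rk(\Phi)-m)/2$, and hence
\[
  n \ge \lvert \Phi^+ \rvert - \lvert \Phi_s^+ \rvert \ge hm/2 = m\,\lvert \Phi^+ \rvert/\rk(\Phi).
\]
The remaining case $m=0$ with $n>0$, which covers unipotent and isolated series, is trivially dominated by $1$.

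\textbf{Main obstacle.} I expect the hardest step to be the uniformity in Step 2. One needs that the class counts and the degree polynomials hold with a single constant $C$ across all $q \in \mathcal{Q}_{>3}$, all Lie types $\lambda \in \boldsymbol{\Lambda}_\Phi$ and all isogeny types. In particular, disconnected centralisers in non-simply-connected groups and the $q$-dependent congruence conditions must both be shown to affect only bounded factors.
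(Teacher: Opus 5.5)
Your outline follows the route of \cite[Theorem~3.1]{AKOV16}, which the paper cites and then reproduces for the simple quotients in the proof of \cref{thm:Approximation-simple-groups}. The ingredients are the same: Jordan decomposition, boundedly many centraliser types, class counts $\asymp q^{\dim \mathrm{Z}(\mathbf{K})}$, $p'$-indices $\asymp q^{\lvert\Phi^+\rvert-\lvert\Psi^+\rvert}$, bounded unipotent data, and a Newton-polygon comparison at the end. You keep the unipotent characters of each centraliser as separate pairs $(m,n)$, whereas the paper collapses each non-unipotent series to its minimal degree using $\zeta^{\mathrm{u}}\sim 1$; either works.

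The genuinely different part is your Step~3. The paper obtains $m/n\le\rk(\Phi)/\lvert\Phi^+\rvert$ by citing \cite{LaLu08} and \cite[Lemma~2.5]{LS04}, while your Coxeter-number computation proves it directly. The one fact you use without proof, $h(\Psi_i)\le h(\Phi)$, follows quickly. With respect to $\Psi_i\cap\Phi^+$, the highest root of $\Psi_i$ has $\Psi_i$-height $h(\Psi_i)-1$. Its $\Phi$-height is therefore at least $h(\Psi_i)-1$, because the simple roots of $\Psi_i$ lie in $\Phi^+$, and at most $h(\Phi)-1$. The roots involved are really those of $\mathbf{G}^*$, i.e.\ of $\Phi^\vee$, but this has the same rank, number of positive roots and Coxeter number.

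The one step you assert rather than prove is (i), and it is exactly the step the paper had to repair. The upper bound is routine; what is needed is the lower bound $\gtrsim q^m$, uniformly, for every type that occurs at all. Consider $\mathbf{K}=\mathrm{C}^\circ_{\mathbf{G}^*}(s)$ that is not a Levi subgroup, e.g.\ in a symplectic group, for $s$ having both $1$ and $-1$ as eigenvalues. Then generic elements of the torus $\mathrm{Z}(\mathbf{K})^\circ$ have centraliser $\mathrm{C}_{\mathbf{G}^*}(\mathrm{Z}(\mathbf{K})^\circ)\supsetneqq\mathbf{K}$, so the elements of type $\mathbf{K}$ lie in a translate $s\,\mathrm{Z}(\mathbf{K})^\circ$. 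The remark after the appendix proof notes that this case seems to have been overlooked in \cite[Lemma~3.13]{AKOV16}, and Case~3 of that proof handles it. In addition, points of $\mathbf{T}^*/W$ do not separate the twisted forms you want to count individually, so "counting points on a stratum of $\mathbf{T}^*/W$" does not by itself give the bound.

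A clean fix runs as follows.
\begin{itemize}
\item Fix a rational $s$ of the given twisted type and count $t\in(\mathrm{Z}(\mathbf{K})^\circ)^{F^*}$ with $\beta(st)\neq 1$ for every root $\beta\notin\Psi$.
\item If $\beta$ is trivial on $\mathrm{Z}(\mathbf{K})^\circ$, then $\beta(st)=\beta(s)\neq 1$ automatically. Otherwise $\beta(st)=1$ defines a proper closed subset with $O(q^{m-1})$ rational points.
\item Hence at least $(q-1)^m-O(q^{m-1})$ elements $st$ satisfy $\mathrm{C}^\circ_{\mathbf{G}^*}(st)=\mathbf{K}$, with the same $F^*$-structure. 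Conjugacy under $\mathrm{N}_{\mathbf{G}^*}(\mathbf{K})^{F^*}$ identifies only boundedly many of them.
\item Finitely many small $q$ are absorbed into $C$.
\end{itemize}
Counting elements of $\mathbf{T}^{F^*}$ outside all root kernels, for an $F^*$-stable maximal torus $\mathbf{T}$, likewise supplies the regular semisimple classes. These put $(\rk(\Phi),\lvert\Phi^+\rvert)$ into $\boldsymbol{a}(\lambda,q)$ for large $q$, which the lower bound in your Step~3 needs.

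There are also two slips.
\begin{itemize}
\item Disconnected centralisers in $\mathbf{G}^*$ arise when $\mathbf{G}$ is not adjoint (e.g.\ $\mathbf{G}=\SL_n$, $\mathbf{G}^*=\mathsf{PGL}_n$), not when $\mathbf{G}$ fails to be simply connected.
\item $m$ must be $\dim\mathrm{Z}(\mathrm{C}^\circ_{\mathbf{G}^*}(s))^\circ$ rather than computed from the full centraliser. For the class of $\mathrm{diag}(1,-1)$ in $\mathsf{PGL}_2$ ($p$ odd), the full centraliser $\mathrm{N}_{\mathsf{PGL}_2}(\mathbf{T}_0)$ of the diagonal torus $\mathbf{T}_0$ has finite centre.
\end{itemize}
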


We remark that the final assertion in \cref{thm:approximation} implies that
\[
  % m \le \rk(\Phi), \quad n \le \lvert \Phi^+ \rvert, \quad
  m/n \le \rk(\Phi) / \lvert \Phi^+ \rvert \quad \text{for every
    $(m,n) \in \boldsymbol{a}(\Phi)$.}
\]
With these preparations we are ready to deduce
\cref{thm:unif-bounded-ranks-invariant-alpha}.

\begin{proof}[Proof of \cref{thm:unif-bounded-ranks-invariant-alpha}]
  As $\widetilde{G}$ surjects onto~$G$, we have
  $\alpha(G) \le \alpha(\widetilde{G})$ and it suffices to prove the
  reverse inequality.  We may suppose that $G$ is infinite and
  semisimple with $\alpha(G) < \infty$ so that we can write
  $G \cong \prod_{i \in \N} S_i$ and
  $\widetilde{G} \cong \prod_{i \in \N} \widetilde{S}_i$, where each
  $S_i$ is a non-abelian finite simple group with universal cover
  $\widetilde{S}_i$.

  Using \cref{pro:abs-of-direct-product}, we see that the abscissae
  $\alpha(G)$ and $\alpha(\widetilde{G})$ do not change if we remove
  finitely many factors $S_i$ respectively $\widetilde{S}_i$.
  Moreover, we can equally ignore all (possibly infinitely many!)
  factors $S_i$ respectively $\widetilde{S}_i$ that are Suzuki or Ree
  groups, since these satisfy $S_i \cong \widetilde{S}_i$; see
  \cite[Corollary~24.13]{MT11}.  As $G$ has bounded Lie rank, we may
  thus assume that all the groups $S_i$ are of Lie type and, more
  specifically, that $S_i$ and $\widetilde{S}_i$ are as described in
  \cref{thm:tits} and the remark following it:
  \[
    S_i = \mathbf{G}_i^{\, F_i} / \mathrm{Z}(\mathbf{G}_i^{\, F_i})
    \qquad \text{and} \qquad \widetilde{S}_i = \mathbf{G}_i^{\, F_i}
  \]  
  for simply connected simple groups $\mathbf{G}_i$ of bounded rank
  equipped with Frobenius endomorphisms~$F_i$.

  Clearly, there are only finitely many irreducible root systems of
  bounded rank.  Invoking \cref{pro:abs-of-direct-product} once more,
  it suffices to consider the case that all the groups~$\mathbf{G}_i$
  have the same root system~$\Phi$ and that $q_i = q_{F_i} \ge 3$.  We
  pick $C \in \R$ and
  $\boldsymbol{a} = \boldsymbol{a}(\Phi) \in \mathcal{A}^+$ with the
  properties described in~\cref{thm:approximation}; clearly, we can
  arrange that $C \ge \rk(\Phi)+1$.  We write
  $(\mathbf{G}_i)_\mathrm{ad}$ to denote the adjoint form of
  $\mathbf{G}_i$ and write
  $\pi \colon \mathbf{G}_i \to (\mathbf{G}_i) _\mathrm{ad}$ for the
  central isogeny.  From \cite[Proposition~24.21]{MT11} we recall that
  \[
    S_i \cong \pi(\mathbf{G}_i^{\, F_i}) =
    [(\mathbf{G}_i)_{\mathrm{ad}}^{\, F_i},
    (\mathbf{G}_i)_{\mathrm{ad}}^{\,
      F_i}],
  \]
  since we are in the setting of \cref{thm:tits} with $q>3$.
  Moreover, we have
  \[
    \big\vert (\mathbf{G}_i)_\mathrm{ad}^{\, F_i} :
    [(\mathbf{G}_i)_\mathrm{ad}^{\, F_i},
    (\mathbf{G}_i)_\mathrm{ad}^{\, F_i}] \big\vert = \lvert
    \mathrm{Z}(\mathbf{G}_i^{\, F_i}) \rvert \le \rk(\Phi)+1 \le C
    \qquad \text{for all $i \in \N$;}
  \]
  see \cite[Corollary~24.13]{MT11}.

  \cref{thm:approximation} yields for each $i \in \N$ a suitable
  $\boldsymbol{a}_i \subseteq \boldsymbol{a}$ and
  $q_i \in \mathcal{Q}$ with $q_i \to \infty$ as $i \to \infty$ such
  that
  \[
    \zeta_{(\mathbf{G}_i)_\mathrm{ad}^{F_i}} - \big\vert
    (\mathbf{G}_i)_\mathrm{ad}^{\, F_i} :
    [(\mathbf{G}_i)_\mathrm{ad}^{\, F_i},
    (\mathbf{G}_i)_\mathrm{ad}^{\, F_i}] \big\vert \,\sim_C\,
    \zeta_{\mathbf{G}_i^{F_i}} - 1 \,\sim_C\, \xi_{\boldsymbol{a}_i,q_i}.
  \]
  Hence \cref{pro:auxiliary-result-for-alpha} implies that
  \[
    \alpha(G) = \absc \Bigl( \prod\nolimits_{i \in \N} (1
    +\xi_{\boldsymbol{a}_i,q_i}) \Bigr) =
    \alpha(\widetilde{G}). \qedhere
  \]
\end{proof}

%%%%%%%%%%%%%%%%%%%%%%%%%%%%%%%%%%%%%%%%%%%%%%

\section{Groups with specified representation growth
  rates} \label{sec:proofs}

In this section we prove \cref{thm:main-result}, with its
\cref{cor:prof-compl,cor:growing-rank-invariant-alpha}.  First we
establish \cref{thm:for-every-a-we-find-G-that-has-PRG}, by adapting
the proof of \cite[Theorem~6.3]{Gar16} to our more general setting.
To complete the argument we state and prove
\cref{thm:main-constr-for-given-rho}, in a form that is somewhat more
general than needed here.  To start with, we extend
Definition~\ref{def:lie-type-basic}, to coin the notion of a
quasi-semisimple profinite group of a prescribed Lie type.

\begin{definition} \label{def:lie-type-profinite} Let
  $\lambda = (\Phi,\tau)$ be a Lie type and fix a prime~$p$.  Let
  $\mathcal{Q}_p \subseteq \mathcal{Q}$ denote the set of all powers
  of~$p$, excluding $2$ and~$3$.  Fix a connected simply connected
  simple algebraic group $\mathbf{G}$ defined over an algebraically
  closed field of characteristic~$p$ with associated root
  system~$\Phi$.  We consider a family
  $\mathcal{F} = (F_i)_{i \in \N}$ of Frobenius endomorphisms $F_i$
  defining $\mathbb{F}_{q_i}$-structures of $\mathbf{G}$ with
  $q_i \in \mathcal{Q}_p$ and inducing the automorphism~$\tau$ on the
  root system~$\Phi$.  This yields a family
  $L_\lambda(q_i) = \mathbf{G}^{F_i}$, $i \in I$, of quasi-simple
  groups of Lie type~$\lambda$ with simple parts
  $S_\lambda(q_i ) = L_\lambda(q_i) / \mathrm{Z}(L_\lambda(q_i))$.  As
  recorded in \cref{rem:covering-fin-exceptions}, each
  $L_\lambda(q_i)$ is the universal covering group of
  $S_\lambda(q_i )$, apart from finitely many exceptions.

  If $G$ is a quasi-semisimple profinite group with semisimple part
  \[
    G/\mathrm{Z}(G) \cong \prod\nolimits_{i \in \N} S_\lambda(q_i),
  \]
  we say that $G$ is a quasi-semisimple profinite group \emph{in
    characteristic $p$} and \emph{of Lie type~$\lambda$}.  To be even
  more specific we say that $G$ is \emph{modelled on $\mathcal{F}$}.
  Furthermore, we call $\rk(G) = \rk(\mathbf{G}) = \rk(\Phi)$ the
  \emph{Lie rank} of $G$.
\end{definition}

For the following preparatory lemma we recall that $\mathcal{A}^+$
denotes the collection of all finite subsets of $\N_{0} \times\N$ and
that in \eqref{equ:Dirichlet-polynominal} we defined Dirichlet
polynomials $\xi_{\boldsymbol{a},q}(s)$ for
$\boldsymbol{a} \in \mathcal{A}^+$ and $q\in \N_{\ge 2}$.
  
\begin{lemma} \label{lem:f(j)-choice-for-rho} Let
  $\varrho_0, \varrho \in \R$ be such that $0 < \varrho_0 < \varrho$,
  and let $q \in \N$ with $q \ge 2$.  Let
  $\boldsymbol{a} \in \mathcal{A}^+$ and, for
  $j \in \N$, let
  $\varnothing \ne \boldsymbol{a}_j \subseteq \boldsymbol{a}$ be such
  that
  \[
    m/n \le \varrho_0 \quad \text{ for all $(m,n) \in \boldsymbol{a}$.}
  \]
  Then the relation $\prec$ defined by
  \[
    (m,n) \prec (m',n') \qquad \text{if} \quad (m-n \varrho > m' - n'
    \varrho) \,\lor\, (m-n \varrho = m' - n' \varrho \,\land\, n <
    n'),
  \]
  for $ (m,n), (m',n') \in \boldsymbol{a}$, gives a strict linear
  order on~$\boldsymbol{a}$.  Let $ (m_0,n_0) \in \boldsymbol{a}$ be
  $\prec$-minimal subject to
  \[
    \{ j \in \N \mid (m_0,n_0) \in \boldsymbol{a}_j \} \subseteq \N
  \]
  being infinite, let $(k_j)_{j\in \N}$ be a sequence of natural
  numbers such that
  $\big\lvert \tfrac{1}{j} k_j - \varrho \big\rvert \le 1/j$ for all
  $j\in \N$ and set
  \[
    f(j) =
    \begin{cases}
      n_0 k_j  - m_0 j & \text{if  $j \ge (\varrho - \varrho_0)^{-1}$,}\\
      0 & \text{otherwise}
    \end{cases}
    \qquad \text{for $j \in \N$,}
  \]
  
  Then the Dirichlet series corresponding to the infinite
  product
  \begin{equation}\label{equ:prod-of-xis}
    \prod_{j \in \N} \big( 1 + \xi_{\boldsymbol{a}_j,q^j} \big)^{q^{f(j)}}
  \end{equation}
  has abscissa of convergence $\varrho$.
\end{lemma}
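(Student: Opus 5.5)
The plan is to reduce the abscissa computation to the convergence of a single series of positive reals, and then to bound that series from above and below by geometric-type sums governed by the distinguished pair $(m_0,n_0)$.

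\emph{Preliminaries.} First, $\prec$ is a strict linear order on $\boldsymbol{a}$. It is clearly irreflexive and transitive. Any two distinct pairs are comparable: if $m-n\varrho = m'-n'\varrho$ and $n=n'$, then $m=m'$. Since $\boldsymbol{a}$ is finite and every $\boldsymbol{a}_j$ is non-empty, the pigeonhole principle shows that some element lies in infinitely many $\boldsymbol{a}_j$, so $(m_0,n_0)$ exists. Next, the exponents $q^{f(j)}$ must be natural numbers. For $j \ge (\varrho-\varrho_0)^{-1}$, using $k_j \ge j\varrho - 1$ and $m_0 \le n_0\varrho_0$, we get
\[
  f(j) \ge n_0(j\varrho-1) - n_0\varrho_0 j = n_0\bigl(j(\varrho-\varrho_0)-1\bigr) \ge 0 .
\]
Finally, the product \eqref{equ:prod-of-xis} is a Dirichlet series with non-negative coefficients, because $q^j \to \infty$. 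For real $\sigma>0$ it therefore converges if and only if the product $\prod_j (1+x_j)^{N_j}$ is finite, where $x_j = \xi_{\boldsymbol{a}_j,q^j}(\sigma) \ge 0$ and $N_j = q^{f(j)}$. Using $x/(1+x) \le \log(1+x) \le x$ and the fact that $N_j x_j \to 0$ is necessary, this holds if and only if
\[
  \Sigma(\sigma) = \sum_{j} \sum_{(m,n)\in \boldsymbol{a}_j} q^{\,f(j) + jm - jn\sigma}
\]
converges. So it suffices to show that $\Sigma(\sigma)$ converges for $\sigma>\varrho$ and diverges for $\sigma \le \varrho$.

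\emph{Lower bound.} Restrict to the infinitely many $j$ with $(m_0,n_0)\in\boldsymbol{a}_j$ and $j \ge (\varrho-\varrho_0)^{-1}$. The corresponding term has exponent
\[
  n_0k_j - m_0j + jm_0 - jn_0\sigma = n_0(k_j - j\sigma) \ge n_0 j(\varrho-\sigma) - n_0 .
\]
For $\sigma \le \varrho$ these terms are bounded below by $q^{-n_0}$, so $\Sigma(\sigma)$ diverges. Hence the abscissa is at least $\varrho$.

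\emph{Upper bound.} Fix $\sigma>\varrho$. By the choice of $(m_0,n_0)$, each of the finitely many pairs $(m,n)\prec(m_0,n_0)$ lies in only finitely many $\boldsymbol{a}_j$. Discarding finitely many $j$, which affects only finitely many terms, all remaining pairs satisfy $(m_0,n_0) \preceq (m,n)$, and in particular $m-n\varrho \le m_0-n_0\varrho$. Using $k_j \le j\varrho+1$, the exponent satisfies
\[
  f(j)+jm-jn\sigma \le j\bigl[(m-n\varrho)-(m_0-n_0\varrho)\bigr] - jn(\sigma-\varrho) + n_0 \le -j(\sigma-\varrho) + n_0,
\]
since $n \ge 1$. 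There are at most $\lvert\boldsymbol{a}\rvert$ pairs per $j$, so $\Sigma(\sigma)$ is dominated by the convergent geometric series $\lvert\boldsymbol{a}\rvert q^{n_0}\sum_j q^{-j(\sigma-\varrho)}$. Hence the abscissa equals $\varrho$.

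The main point needing care is the equivalence between convergence of the infinite product (with large multiplicities $q^{f(j)}$) and convergence of $\Sigma(\sigma)$. I would handle it in the same spirit as the discussion behind \cref{pro:auxiliary-result-for-alpha}. The tie-breaking condition $n<n'$ in $\prec$ plays no role in the estimates; it only serves to make $(m_0,n_0)$ well defined.
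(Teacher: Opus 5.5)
Your proof is correct and follows the paper's argument. You reduce to convergence of $\sum_j q^{f(j)}\sum_{(m,n)\in\boldsymbol{a}_j} q^{j(m-n\sigma)}$, obtain divergence for $\sigma\le\varrho$ from the infinitely many $j$ with $(m_0,n_0)\in\boldsymbol{a}_j$, and obtain convergence for $\sigma>\varrho$ by comparison with a geometric series. The one small difference is in the upper bound: you split off $-jn(\sigma-\varrho)$ and use $n\ge 1$, which treats every $\sigma>\varrho$ at once and makes the tie-break in $\prec$ unnecessary, whereas the paper uses the tie-break to dominate each surviving pair by $(m_0,n_0)$ at $\sigma=\varrho+\varepsilon$ for small $\varepsilon$.
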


\begin{proof}
  It is easy to check that $\prec$ yields a strict linear order on
  $\boldsymbol{a}$ as claimed and that $f(j) \ge 0$ for all
  $j \in \N$.  The Dirichlet series corresponding to the infinite
  product in~\eqref{equ:prod-of-xis} has non-negative integral
  coefficients.  Thus it is enough to look at the convergence
  properties of the product for $s = \sigma \in \R_{>0}$ close to
  $\varrho$.  The product converges if and only if the sum
  \begin{equation} \label{equ:sum-ofq-powers} \sum_{j \in \N} q^{f(j)}
    \sum_{(m,n) \in \boldsymbol{a}_j} q^{j(m-n\sigma)}
  \end{equation}
  converges.  Due to our choice of $(m_0,n_0)$, we may -- by ignoring
  finitely many summands -- suppose without loss of generality that
  for all sufficiently small $\varepsilon\ge 0$,
  \[
    m_0 - n_0 (\varrho + \varepsilon) \ge m-n (\varrho +
    \varepsilon) \quad \text{for all $j\in \N$ and $(m,n) \in \boldsymbol{a}_j$.}
  \]
  
  First suppose that $\sigma = \varrho + \varepsilon$ for such a small
  $\varepsilon > 0$.  We put $\delta = - n_0 \varepsilon/2 <0$ and
  $j_0 = \lceil \max \{ (\varrho - \varrho_0)^{-1}, 2/\varepsilon \}
  \rceil$.  For $j \ge j_0$ we see that
  \begin{multline*}
    \tfrac{1}{j} f(j) + (m_0 - n_0 \sigma) = \bigl( n_0 \tfrac{1}{j} k_j -
    m_0 \bigr) + (m_0 - n_0 \sigma) \le n_0 ( \varrho + \tfrac{1}{j}) - n_0
    (\varrho + \varepsilon) \\ = n_0 
    \bigl( \tfrac{1}{j} - \varepsilon \bigr) \le \delta.
  \end{multline*}
  From this we deduce that 
  \[
    \sum_{j \ge j_0} q^{f(j)} \sum_{(m,n) \in \boldsymbol{a}_j}
    q^{j(m-n\sigma)} \le \lvert \boldsymbol{a} \rvert \sum_{j \ge j_0}
    \left( q^{\frac{1}{j} f(j) + (m_0 - n_0 \sigma)} \right)^j <
    \lvert \boldsymbol{a} \rvert \sum_{j \ge j_0} (q^\delta)^j <
    \infty
  \]
  and thus the sum in~\eqref{equ:sum-ofq-powers} is convergent.
  
  Next suppose that $\sigma = \varrho - \varepsilon$ for small
  $\varepsilon > 0$.  Due to our choice of $(m_0,n_0)$, the set
  $J = \{ j \in \N \mid (m_0,n_0) \in \boldsymbol{a}_j \} $ is infinite, and it is
  enough to show that
  \[
    \sum_{j \in J} q^{f(j)} q^{j (m_0 - n_0 \sigma)} = \sum_{j \in J}
    \left( q^{\frac{1}{j} f(j) + (m_0 - n_0 \sigma)} \right)^j
  \]
  diverges.  For this it suffices to observe that for all
  $j \ge \lceil \max \{ (\varrho - \varrho_0)^{-1}, \varepsilon^{-1}
  \} \rceil$,
  \begin{multline*}
    \tfrac{1}{j} f(j) + (m_0 - n_0 \sigma) = n_0 \tfrac{1}{j} k_j -
    m_0 + (m_0 - n_0 \sigma) \ge n_0 ( \varrho -
    \tfrac{1}{j}) - n_0 (\varrho - \varepsilon) \\
    = n_0 \bigl( \varepsilon -\tfrac{1}{j} \bigr) \ge 0. \qedhere
  \end{multline*}
\end{proof}

\begin{theorem} \label{thm:for-every-a-we-find-G-that-has-PRG} Let
  $\varrho \in \R_{>0}$.  Let $\lambda = (\Phi,\tau)$ be a Lie type
  such that $\varrho > \rk(\Phi) / \lvert \Phi^+ \rvert$.  Let $q$ be
  a power of a prime~$p$.  Then there exists a function
  $f \colon \N \to \N_0$ such that every quasi-semisimple profinite
  group $G$ in characteristic $p$ and of Lie type $\lambda$ with
  semisimple part
  $G / \mathrm{Z}(G) \cong \prod_{j \in \N} S_\lambda(q^j)^{q^{f(j)}}$
  has \PRG{} of degree $\alpha({G}) = \varrho$.
\end{theorem}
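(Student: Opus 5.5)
The plan is to reduce to the universal covering group and then combine \cref{thm:approximation}, \cref{pro:auxiliary-result-for-alpha} and \cref{lem:f(j)-choice-for-rho}.

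\emph{Step 1: the function $f$.} Fix $\Phi$, and take $C$, $\boldsymbol{a} = \boldsymbol{a}(\Phi)$ and the subsets $\boldsymbol{a}(\lambda,q^j)$ as in \cref{thm:approximation}. Put $\varrho_0 = \rk(\Phi)/\lvert\Phi^+\rvert < \varrho$. The remark after \cref{thm:approximation} shows $m/n \le \varrho_0$ for all $(m,n)\in\boldsymbol{a}$. Set $\boldsymbol{a}_j = \boldsymbol{a}(\lambda,q^j)$. Each $\boldsymbol{a}_j$ is nonempty, since it contains $(\rk(\Phi),\lvert\Phi^+\rvert)$; at any rate $\zeta-1$ is nonzero for a nonabelian group. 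Define $f$ by \cref{lem:f(j)-choice-for-rho}, with $(m_0,n_0)$ chosen $\prec$-minimal among the elements lying in infinitely many $\boldsymbol{a}_j$. The lemma then gives
\[
  \absc\Bigl(\prod_j (1+\xi_{\boldsymbol{a}_j,q^j})^{q^{f(j)}}\Bigr) = \varrho .
\]
We may discard the finitely many $j$ with $q^j \le 3$ or with exceptional Schur multiplier. This is harmless by \cref{pro:abs-of-direct-product}, since it only removes a finite direct factor, which does not affect $\alpha$.

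\emph{Step 2: the covering group.} Let $G$ be as in the statement. Then $G$ is a quotient of $\widetilde{G} = \prod_j L_\lambda(q^j)^{q^{f(j)}}$, and it in turn surjects onto its semisimple part $\overline{G} = \prod_j S_\lambda(q^j)^{q^{f(j)}}$. Hence
\[
  \alpha(\overline{G}) \le \alpha(G) \le \alpha(\widetilde{G}),
\]
and it suffices to show that both ends equal $\varrho$. For $\widetilde{G}$, apply \cref{pro:auxiliary-result-for-alpha} with $H = G$, indexing the factors by pairs (a value of $j$, a copy among the $q^{f(j)}$ copies). The $q$-parameters $q^j$ tend to infinity along this indexing, because each $j$ contributes only finitely many copies. \cref{thm:approximation} supplies $\zeta_{L_\lambda(q^j)} - 1 \sim_C \xi_{\boldsymbol{a}_j,q^j}$. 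The resulting infinite product is exactly the one in Step 1, so $\alpha(\widetilde{G}) = \varrho$.

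\emph{Step 3: the simple quotient.} For $\overline{G}$, argue as in the proof of \cref{thm:unif-bounded-ranks-invariant-alpha}. View $S_\lambda(q^j)$ as the derived subgroup of the adjoint group $\mathbf{G}_{\mathrm{ad}}^{F_j}$, whose index is at most $\rk(\Phi)+1 \le C$. Then apply \cref{pro:auxiliary-result-for-alpha} to $\prod \mathbf{G}_{\mathrm{ad}}^{F_j}$ with $H = \overline{G}$; the relevant zeta functions satisfy $\sim_C \xi_{\boldsymbol{a}_j,q^j}$ by \cref{thm:approximation}. This gives $\alpha(\overline{G}) = \varrho$ as well.

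\emph{Main obstacle.} The delicate point is to check that the hypotheses of \cref{pro:auxiliary-result-for-alpha} hold uniformly when factors are repeated $q^{f(j)}$ times. We need the constant $C$ to be independent of multiplicity; it is, because the relation $\sim_C$ is applied factorwise. We also need $q_i \to \infty$ along the enumeration of copies, which holds as noted in Step 2. Finally, we need the exponent $q^{f(j)}$ to match the $q^{f(j)}$-th power in \cref{lem:f(j)-choice-for-rho}, which it does by construction. With these checks in place, \PRG{} follows from $\alpha(G) = \varrho < \infty$.
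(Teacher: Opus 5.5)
Your proposal is correct and follows the paper's proof: you choose $f$ by \cref{lem:f(j)-choice-for-rho} with $\varrho_0 = \rk(\Phi)/\lvert\Phi^+\rvert$, and you apply \cref{pro:auxiliary-result-for-alpha} to the product of the groups $L_\lambda(q^j)$ with multiplicities $q^{f(j)}$. The only difference is that the paper cites \cref{thm:unif-bounded-ranks-invariant-alpha} to pass to the universal cover, whereas you rerun that theorem's adjoint-group argument through the sandwich $\alpha(\overline{G}) \le \alpha(G) \le \alpha(\widetilde{G})$, which is the same argument specialised to this setting.
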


\begin{proof}
  Without loss of generality, we may stay clear of the finitely many
  exceptional cases referred to in \cref{thm:tits} and
  \cref{rem:covering-fin-exceptions}.  In particular, this means that
  in the argument below we tacitly require that $j \ge 2$, for certain
  $\lambda$ and $q \in \{2,3\}$.  By
  \cref{thm:unif-bounded-ranks-invariant-alpha} it is enough to
  consider the case that $G$ is the universal covering group, that is,
  we may work with
  \[
    G = \prod\nolimits_{j \in \N} L(q^j)^{q^{f(j)}};
  \]
  compare with \cref{def:lie-type-basic}.
  
  We fix $C \in \R$, a finite set
  $\boldsymbol{a} = \boldsymbol{a}(\Phi) \in
    \mathcal{A}^+$ and subsets
  $\boldsymbol{a}_j = \boldsymbol{a}(\lambda,q^j) \subseteq
  \boldsymbol{a}$ for $j \in \N$ such that the conclusions collected
  in \cref{thm:approximation} hold.  We choose $f(j)$ as in
  \cref{lem:f(j)-choice-for-rho}, for
    $\varrho_0 = \rk(G)/\lvert \Phi^+ \rvert$, and apply
  \cref{pro:auxiliary-result-for-alpha}, for $G=H$, to conclude the
  proof.
\end{proof}

By design, the groups supplied by
\cref{thm:for-every-a-we-find-G-that-has-PRG} have bounded, in fact
constant Lie rank.  In order to produce quasi-semisimple groups of
prescribed representation growth that are also profinite completions,
we need to take one further step; compare with
\cref{thm:profinite-completion} below.

\begin{theorem} \label{thm:main-constr-for-given-rho}
  Let $(\varrho_m)_{m\in \N}$ be a strictly increasing sequence of
  positive real numbers converging to a number~$\varrho \in \R_{>0}$.
  Suppose that for each $m \in \N$ there is a
  semisimple profinite group of Lie type
  \[
    H_m = \prod\nolimits_{j=1}^\infty S_{m,j},
  \]
  whose composition factors $S_{m,j}$ are of common Lie type
  $\lambda_m$, such that $\alpha(H_m) = \varrho_m$.  Furthermore,
  suppose that $\{ \rk(H_m) \mid m \in \N \}$ is unbounded.

  Then there is a finitely generated semisimple profinite group of the
  form
  \[
    G = \prod\nolimits_{m=1}^\infty K_m,
  \]
  where each $K_m $ is a suitable finite quotient of the group~$H_m$,
  such that $G$ and its universal covering group $\widetilde{G}$
  satisfy $\alpha(G) = \alpha(\widetilde{G}) = \varrho$.
\end{theorem}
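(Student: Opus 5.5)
We construct each $K_m$ as a finite subproduct of $H_m$, cut off at the point where its representation count up to a suitable dimension threshold has become large enough, and we control the remaining representations of $G$ through the rank of the factors. Since $\{\rk(H_m)\}$ is unbounded, we pass to a subsequence and assume $\rk(H_m)$ is strictly increasing with $\rk(H_m)\to\infty$; the limit $\varrho$ is unchanged. We take $K_m = \prod_{j=1}^{J_m} S_{m,j}$ for $J_m$ still to be chosen.

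The first step is the lower bound $\alpha(G)\ge\varrho$. Since $\alpha(H_m)=\varrho_m$, for every $m$ there are infinitely many $n$ with $R_n(H_m)\ge n^{\varrho_m-1/m}$. Every representation of $H_m$ of dimension at most $n$ factors through a finite subproduct $\prod_{j\le J} S_{m,j}$. So we can choose $n_m$ increasing rapidly and $J_m$ such that $R_{n_m}(K_m)\ge n_m^{\varrho_m-1/m}$. Because $K_m$ is a quotient of $G$, we get $\limsup_n \log R_n(G)/\log n\ge\varrho$.

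The second step, the upper bound $\alpha(\widetilde G)\le\varrho$, is the main obstacle. We have $\widetilde G=\prod_m\widetilde K_m$, so $\zeta_{\widetilde G}=\prod_m \zeta_{\widetilde K_m}$, and for real $\sigma>\varrho$ convergence is equivalent to $\sum_m(\zeta_{\widetilde K_m}(\sigma)-1)<\infty$. Fix $\sigma>\varrho$.
\begin{itemize}
\item For each fixed $m$, \cref{thm:unif-bounded-ranks-invariant-alpha} applied to $H_m$, which has constant rank, gives $\alpha(\widetilde H_m)=\varrho_m<\sigma$. Hence $\zeta_{\widetilde K_m}(\sigma)\le\zeta_{\widetilde H_m}(\sigma)<\infty$.
\item To make the sum converge uniformly in $m$, we exploit the growing ranks. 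By \cref{cor:upper-bound-unbounded-rank}, together with the Liebeck--Shalev bound $\zeta_{L}(\sigma')\to1$ uniformly over quasi-simple groups $L$ of rank at least $r(\sigma')$, for any fixed $\sigma'>0$ we have $\zeta_{\widetilde S_{m,j}}(\sigma')-1\le \varepsilon_m$ with $\varepsilon_m\to0$. This estimate needs $\sigma'$ strictly larger than a threshold such as $2/\rk$, which holds for large $m$ because ranks grow.
\item Then $\zeta_{\widetilde K_m}(\sigma)\le(1+\varepsilon_m(\sigma))^{J_m}$. We therefore want $J_m\,\varepsilon_m(\sigma)$ to be summable for every $\sigma>\varrho$.
\end{itemize}
To achieve this, we choose the subsequence and the $J_m$ jointly. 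Having fixed $J_m$ in the first step, we pass to a later $m'$ whose rank is so large that $\zeta_{\widetilde S}(\varrho_1)-1\le 2^{-m'}/J_{m'}$ for all relevant factors $S$. Here the threshold $\varrho_1 < \sigma$ works for every $\sigma>\varrho$, since $\varrho_1<\varrho$.

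This creates a conflict: $J_{m'}$ is only determined by $n_{m'}$ after $m'$ has been chosen. I would resolve it as follows.
\begin{enumerate}
\item Note that $J_m$ can be bounded in terms of $H_m$ and $n_m$ alone, independently of later choices.
\item Use the freedom in the lower bound: it only requires $R_{n}(K_m)\ge n^{\varrho_m-1/m}$ for a single $n$. By \cref{pro:auxiliary-result-for-alpha}-type estimates, this $n$ can be realised with $J_m\le n^{c}$ for some constant $c$ independent of $m$.
\item Pick the rank of $H_{m}$ so large that $\sup_{S}(\zeta_{\widetilde S}(\varrho_1)-1)\le n^{-c-1}$ holds for every $n\ge n_{m}$. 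This is possible because, for large rank, all nontrivial degrees of $\widetilde S$ exceed $n_m$, and by \cref{cor:upper-bound-unbounded-rank} the counts are at most $n^{\varepsilon}$.
\end{enumerate}

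Combining both steps gives $\varrho\le\alpha(G)\le\alpha(\widetilde G)\le\varrho$. For finite generation, each $K_m$ is a product of nonisomorphic simple groups or of boundedly many copies of each. Alternatively, finite generation follows from \PRG{} via \cref{thm:PRG-then-fg}.
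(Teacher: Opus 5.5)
There is a genuine gap, and it is in your upper bound. Your plan is to make $\sum_m J_m\,\varepsilon_m(\varrho_1)$ summable. But $\zeta_{\widetilde K_m}(\varrho_1)\le(1+\varepsilon_m(\varrho_1))^{J_m}$, so this would give $\zeta_{\widetilde G}(\varrho_1)=\prod_m\zeta_{\widetilde K_m}(\varrho_1)<\infty$. That means $\alpha(\widetilde G)\le\varrho_1<\varrho$, which contradicts your own first step.

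You can see the failure directly:
\[
\zeta_{\widetilde K_m}(\varrho_1)\ \ge\ R_{n_m}(K_m)\,n_m^{-\varrho_1}\ \ge\ n_m^{\varrho_m-1/m-\varrho_1},
\]
so $J_m\varepsilon_m(\varrho_1)\ge(\varrho_m-1/m-\varrho_1)\log n_m\to\infty$.

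Step (3) cannot be arranged either. To get $R_{n_m}(K_m)\ge n_m^{\varrho_m-1/m}$, some factor of $K_m$ must have a nontrivial irreducible representation of degree $\le n_m$, so ``all nontrivial degrees exceed $n_m$'' is impossible. Moreover, the rank of $H_m$ is fixed data, while $n_m$ can only be chosen after $H_m$ is fixed, so you cannot make $\varepsilon_m$ small relative to $n_m$. A fixed comparison exponent below $\varrho$ throws away exactly the room you need; the comparison must be made at $\varrho$ itself. Taking $K_m$ to be an initial segment $\prod_{j\le J_m}S_{m,j}$ also keeps the factors with the smallest degrees, which is the wrong end.

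The missing idea is to discard finitely many factors of $H_m$, rather than to rely on rank growth. By \cref{thm:unif-bounded-ranks-invariant-alpha}, $\alpha(\widetilde H_m)=\varrho_m<\varrho$, so $\zeta_{\widetilde H_m}(\varrho)=\prod_j\zeta_{\widetilde S_{m,j}}(\varrho)$ converges. Hence there is $j_m$ with $\prod_{j>j_m}\zeta_{\widetilde S_{m,j}}(\varrho)\le 1+2^{-m}$. By \cref{pro:abs-of-direct-product}, this tail still has abscissa $\varrho_m$. So your first step applies to the tail and produces $n_m$ and a finite subproduct $K_m$ of the tail with $R_{n_m}(K_m)\ge n_m^{\varrho_m-1/m}$. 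Then $\zeta_{\widetilde G}(\varrho)\le\prod_m(1+2^{-m})<\infty$, which gives $\alpha(\widetilde G)\le\varrho$.

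This is essentially the paper's mechanism, phrased through zeta values. The paper deletes finitely many factors of $H_m$ (using Jordan's theorem) so that $\widetilde K_m$ adds no nontrivial representations of dimension $\le\underline n(m-1)$. It also ensures $R_n(\widetilde L_{m-1}\times\widetilde K_m)\le n^{\varrho}$ for all $n>\underline n(m-1)$, and only then chooses $\underline n(m)$ and $K_m$ for the lower bound. The growth of the ranks plays no role in computing $\alpha$.

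Your finite-generation argument via \cref{thm:PRG-then-fg} is fine. The alternative claim about boundedly many copies of each factor is unjustified and should be dropped.
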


\begin{proof}
  Without loss of generality, we may assume that the Lie rank
  $\rk(H_m)$ is strictly increasing with~$m$.  We set $H_0 = 1$ and
  aim to construct the desired group
  \[
    G = \prod\nolimits_{m=1}^\infty K_m \cong \varprojlim_{m \in \N}
    L_m, \quad \text{where we write } L_m = \prod\nolimits_{l=1}^m
    K_l,
  \]
  as a cartesian product of suitable finite quotients $K_m $ of the
  groups~$H_m$ such that $\alpha(G) = \alpha(\widetilde{G}) =\varrho$.
  By design, such a group $G$ has \PRG{} and is thus finitely generated
  by \cref{thm:PRG-then-fg}.

  By \cref{thm:unif-bounded-ranks-invariant-alpha}, the group $H_m$
  and its universal covering group
  $\widetilde{H}_m = \prod_{j=1}^\infty \widetilde{S}_{m,j}$ satisfy
  $\alpha(H_m) = \alpha(\widetilde{H}_m) = \varrho_m$ for every
  $m \in \N$.  In order to build a group~$G$ with the required
  representation growth, it is enough to produce, for $m \in \N_0$,
  positive integers $\underline{n}(m)$ such that
  \[
    1 = \underline{n}(0) < \underline{n}(1) < \underline{n}(2) < \dots
  \]
  and finite quotients $K_m$ of the groups $H_m$ such that the finite
  semisimple groups $L_m = \prod_{l=1}^m K_l$ and their universal
  covers $\widetilde{L}_m = \prod_{l=1}^m \widetilde{K}_l$ satisfy the following.
  \begin{enumerate}
  \item[(i)] For each $m \in \N$, all irreducible representations of
    $\widetilde{L}_m$ of dimension at most $\underline{n}(m-1)$ factor
    through the natural projection
    $\widetilde{L}_m = \widetilde{L}_{m-1} \times \widetilde{K}_m \to
    \widetilde{L}_{m-1}$; equivalently we have
    \begin{equation}\label{equ:no-new-small-reps}
      R_{\underline{n}(m-1)} \bigl(\widetilde{L}_m \bigr) = R_{\underline{n}(m-1)} \bigl(
      \widetilde{L}_{m-1} \bigr).
    \end{equation}
  \item[(ii)] For each $m \in \N$,
    \begin{equation} \label{equ:never-larger-than-rho} \frac{\log
        \bigl( R_n \bigl( \widetilde{L}_m \bigr) \bigr)}{\log n} \le
      \varrho \qquad \text{for all $n \in \N$ with $n > \underline{n}(m-1)$.}
    \end{equation}
    and
    \begin{equation}\label{equ:close-to-rho-m}
      \frac{\log \left( R_{\underline{n}(m)} (L_m ) \right)}{\log \left( \underline{n}(m)
        \right)} \ge \varrho_m - \frac{1}{m},
    \end{equation}
  \end{enumerate}
  Indeed, from these properties it follows that
  \[
    \alpha(G) = \limsup_{n \in \N} \frac{\log R_n(G)}{\log n} =
    \limsup_{m \in \N} \max_{\underline{n}(m-1)<n \le
      \underline{n}(m)}\frac{\log R_n(L_m)}{\log n}
  \]
  and similarly $\alpha(\widetilde{G})$ lie in the real interval
  $[\varrho_m - \tfrac{1}{m}, \varrho]$ for all $m \in \N$, and hence
  we obtain $\alpha(G) = \alpha(\widetilde{G}) = \varrho$.

  \smallskip
  
  We produce suitable positive integers $\underline{n}(m)$ and finite quotients
  $K_m$ recursively.  We recall that $\underline{n}(0) = 1$ and $H_0 = 1$, and we
  set $K_0=1$.  Now let $m \in \N$, and suppose that suitable positive
  integers $\underline{n}(l)$ and finite quotients $K_l$ have already been
  identified for $0 \le l \le m-1$.  Since
  $\widetilde{L}_{m-1} = \prod_{l = 1}^{m-1} \widetilde{K}_l$ is
  finite, we see that
  \begin{equation}\label{equ:rho-does-not-change}
    \alpha(\widetilde{L}_{m-1} \times \widetilde{H}_m) = \alpha(\widetilde{H}_m) =
    \varrho_m \quad \text{and} \quad     \alpha(L_{m-1} \times H_m) =
    \alpha(H_m) = \varrho_m.
  \end{equation}
  Jordan's theorem on finite linear groups implies that, by deleting
  finitely many factors $S_{m,j}$ from~$H_m$, correspondingly
  $\widetilde{S}_{m,j}$ from~$\widetilde{H}_m$, we can arrange that
  $\widetilde{H}_m$ does not have any representations of dimension at
  most $\underline{n}(m-1)$ apart from the principal $1$-dimensional
  representation.  This guarantees that \eqref{equ:no-new-small-reps}
  holds for any choice of~${K_m \le H_m}$.  Furthermore,
  \eqref{equ:rho-does-not-change} and $\varrho_m < \varrho$ imply
  that, by deleting again finitely many factors $S_{m,j}$ from~$H_m$,
  correspondingly $\widetilde{S}_{m,j}$ from~$\widetilde{H}_m$, we can
  arrange that~\eqref{equ:never-larger-than-rho} holds for any choice
  of~$K_m \le H_m$.  Finally, \eqref{equ:rho-does-not-change} implies
  that there exist $\underline{n}(m) > \underline{n}(m-1)$ and a
  finite quotient $K_m$ of $H_m$ such that \eqref{equ:close-to-rho-m}
  holds.
\end{proof}

Clearly, \cref{thm:main-result} is a direct consequence of
\cref{thm:for-every-a-we-find-G-that-has-PRG,thm:main-constr-for-given-rho}.

\begin{remark} \label{diff-to-Garcia-Klopsch} Garc\'ia Rodr\'iguez and
  Klopsch's construction in \cite[Chapter~6]{Gar16} only produced
  certain semisimple profinite groups in positive characteristic of
  fixed Lie type.  More precisely, they considered cartesian products
  of $\SL_{p^{\beta}}(p^{\gamma})$ or $\SU_{p^{\beta}}(p^{\gamma})$
  for a fixed prime power $p^\beta$ across infinitely many values of
  $\gamma$, and they indicated that, similarly, one could work with
  cartesian products of $\Sp_{2\eta}(2^{\gamma})$,
  $\Spin_{2\eta}^+(2^{\gamma})$, or $\Spin^-_{2\eta}(2^{\gamma})$ for
  fixed $\eta$ and increasing $\gamma$.  In contrast, the
  quasi-semisimple profinite groups $G$ with prescribed polynomial
  representation growth resulting from \cref{thm:main-result} can
  involve much more flexibly composition factors that are simple
  groups of Lie type of any preferred Chevalley type and underlying
  field cardinality.  In particular, this leads to groups $G$ that are
  profinite completions, as we explain next.
\end{remark}

We recall that a finitely generated profinite group $G$ is termed a
\emph{profinite completion} if there exists a finitely generated
residually finite group $\Gamma$ such that $G$ is isomorphic to the
profinite completion~$\widehat{\Gamma}$.  It is generally difficult to
decide whether a finitely generated profinite group is a profinite
completion or not; for instance, see \cite{LPS96,Seg01,Pyb03}.
In~\cite{KN06}, Kassabov and Nikolov studied this question for
semisimple profinite groups and produced the following criterion.

\begin{theorem}[Kassabov and Nikolov]
  \label{thm:profinite-completion}
  Let $G$ be a finitely generated semisimple profinite group that has
  growing Lie ranks.  Then $G$ is a profinite completion.
\end{theorem}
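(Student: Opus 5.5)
The statement is quoted from \cite{KN06}; I would prove it in the spirit of Kassabov and Nikolov by exhibiting an explicit dense, finitely generated subgroup $\Gamma \le G$ whose profinite completion is $G$. Write $G = \prod_{i \in \N} S_i$ with $\rk_{\mathrm{Lie}}(S_i) \to \infty$ and $d(G) = d < \infty$. Discarding finitely many factors, which can be added back as a direct factor at the end, every $S_i$ is an alternating or classical group of large rank.

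The target is a finite set $X \subseteq G$ such that $\Gamma = \langle X \rangle$ satisfies two conditions:
\begin{enumerate}
\item[(a)] $\Gamma$ is dense in $G$;
\item[(b)] $\Gamma$ contains the restricted direct product $\bigoplus_{i} S_i$.
\end{enumerate}
Condition (a) already holds for any $d$ topological generators of $G$. The content is in (b), and it forces extra generators.

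The construction I would try uses the large Lie rank to decompose each $S_i$ by means of a copy of a small fixed group. For alternating factors I would embed $\Alt(5)$ via disjoint blocks of points. For classical factors I would embed $\SL_2$ or $\SL_3$ over the prime field, acting on many orthogonal blocks of the natural module. The added generators are elements $x, y$ with $x = (x_i)$ and $y = (y_i)$, where $x_i$ and $y_i$ are "shifts" of these blocks whose periods depend on $i$. I would choose the periods so that suitable iterated commutators of bounded length in $x$, $y$ and the $d$ generators have support exactly in one prescribed factor $S_i$ and are non-trivial there. Normal closure inside $\Gamma$, together with simplicity of $S_i$, then places each $S_i$ in $\Gamma$.

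\textbf{Main obstacle.} Arranging these supports uniformly with a fixed finite generating set is the hardest step. I would try it first for products of alternating groups, where a permutation-model argument is concrete, and then transfer it to classical groups via their large alternating-like subgroups.

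Given (a) and (b), it remains to show that every finite-index normal subgroup $N \trianglelefteq \Gamma$ is the trace of an open subgroup of $G$.
\begin{enumerate}
\item[(1)] The subgroup $N \cap \bigoplus_i S_i$ is normal of finite index in $\bigoplus_i S_i$. Each $S_i$ is simple, so $N$ contains $S_i$ or meets it trivially. Factors meeting $N$ trivially embed into the finite group $\Gamma/N$, so there are only finitely many. Hence $N \supseteq \bigoplus_{i \notin F} S_i$ for some finite set $F$.
\item[(2)] Next I must show that $N \supseteq \Gamma \cap \prod_{i \notin F} S_i$. For this I would use uniform bounded commutator width of the $S_i$: every element is a product of boundedly many commutators. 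I would also use the fact that $\Gamma$ is generated modulo $\bigoplus$ by elements whose componentwise commutator expressions have uniformly bounded length. Then the image of $\Gamma \cap \prod_{i \notin F} S_i$ in $\prod_{i\notin F} S_i / \bigoplus_{i \notin F} S_i$ is perfect in a uniform way. Its finite quotients coming from $\Gamma/N$ must therefore be trivial, since a non-trivial finite quotient would have to be realised already on finitely many coordinates.
\item[(3)] Consequently $\Gamma/N$ is a quotient of the image of $\Gamma$ in $\prod_{i\in F} S_i$. Thus $N$ is open in the induced topology, and $\widehat{\Gamma} \cong G$.
\end{enumerate}
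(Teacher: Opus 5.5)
The paper does not prove this statement; it quotes it from \cite{KN06}. Judged on its own, your proposal has a genuine gap at step (2), which is where the real difficulty lies. It also leaves the construction in (b) unresolved.

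Your steps (1) and (3) are fine. Once $\bigoplus_i S_i \le \Gamma$, they give $\Gamma = M \times \prod_{i\in F} S_i$ with $M = \Gamma \cap \prod_{i\notin F} S_i$ and $M/\bigoplus_{i\notin F} S_i \cong \Gamma/\bigoplus_i S_i$. Hence the natural map $\widehat{\Gamma} \to G$ is an isomorphism exactly when $\Gamma/\bigoplus_i S_i$ has no non-trivial finite quotients. That is a property of your particular generators, not of the factors $S_i$.

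Uniform commutator width of the $S_i$ writes elements of $\Gamma$ as products of commutators of elements of $G$, not of $\Gamma$. So it does not even make $\Gamma/\bigoplus_i S_i$ perfect. Perfectness would not exclude non-abelian finite simple quotients anyway. Finally, your claim that ``a non-trivial finite quotient would have to be realised already on finitely many coordinates'' is precisely the assertion to be proved.

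Conditions (a) and (b) really are insufficient, even for shift-type generators of the kind you propose. Here is a counterexample.
\begin{itemize}
\item[(i)] Take distinct odd $n_i \ge 5$ and let $G = \prod_i \Alt(n_i)$ act on $\Z/n_i\Z$ coordinatewise. Put $x_i \colon k \mapsto k+1$ and $y_i = (0\,1\,2)$.
\item[(ii)] Then $\Gamma = \langle x,y\rangle$ is dense, since each $\langle x_i,y_i\rangle = \Alt(n_i)$ and the factors are pairwise non-isomorphic. Moreover $[y, y^{x^{n_i-1}}]$ has finite support and is non-trivial at $i$, so $\bigoplus_i \Alt(n_i) \le \Gamma$.
\item[(iii)] Let $g = w(x,y)$ for a word $w$ of length $L$. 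Let $\tilde x \colon m \mapsto m+1$, and let $\tilde y_i$ be the $n_i$-periodic lift of $(0\,1\,2)$ to $\Z$. For every $i$ with $n_i > 4L$, the bijection $w(\tilde x,\tilde y_i)$ is the only $n_i$-periodic lift of $g_i$ moving every point by at most $2L$.
\item[(iv)] The translation number $\frac{1}{n_i}\sum_{k=0}^{n_i-1}(f(k)-k)$ is a homomorphism on $n_i$-periodic bijections of $\Z$. It equals $1$ on $\tilde x$ and $0$ on $\tilde y_i$. So on $w(\tilde x,\tilde y_i)$ it equals the exponent sum of $x$ in $w$.
\item[(v)] Hence the exponent sum of $x$ is a well-defined epimorphism $\Gamma \to \Z$. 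So $\widehat{\Gamma}$ maps onto $\Z/2\Z$, while $G$ is perfect, and therefore $\widehat{\Gamma} \not\cong G$.
\end{itemize}

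What is missing is a mechanism that forces every finite quotient of $\Gamma$ to factor through finitely many coordinates. This must be built into the choice of $\Gamma$, it is the substance of the theorem in \cite{KN06}, and your proposal does not supply it.
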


As far as we know it is open whether this or a similar criterion
extends to all quasi-semisimple profinite groups. However, similar
results have been recently obtained by Kionke and Piccolo
(unpublished) on cartesian products of quasi-simple groups.  For our
purposes it is of interest to observe that the complex representations
of a quasi-semisimpe profinite completion $G =\widehat{\Gamma}$ are
tightly linked to those of a finitely generated group $\Gamma$ giving
rise to~$G$.

\begin{proposition} \label{pro:factors-through-compl}
  \label{pro:repr-of-abstr-H}
  Let $\Gamma$ be a finitely generated discrete group such that its
  profinite completion~$\widehat{\Gamma}$ is quasi-semisimple.  Then
  every finite dimensional complex representation $\varrho$
  of~$\Gamma$ factors through the profinite
  completion~$\Gamma \hookrightarrow \widehat{\Gamma}$.
\end{proposition}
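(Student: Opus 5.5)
Let $\varrho\colon \Gamma \to \mathsf{GL}_n(\C)$ be a finite dimensional representation. The plan is to show that the image $\varrho(\Gamma)$ is finite. Once this holds, $\ker\varrho$ is a finite index normal subgroup of $\Gamma$, so $\varrho$ extends continuously to $\widehat{\Gamma}$ and factors through $\Gamma \to \widehat{\Gamma}$. (We use the arrow as in the statement; injectivity of $\Gamma\to\widehat{\Gamma}$ is not needed for the factorisation.)

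First we exploit perfectness. Since $\widehat{\Gamma}$ is quasi-semisimple, it is perfect, so its abelianisation is trivial. The finite abelian quotients of $\Gamma$ are exactly the finite continuous abelian quotients of $\widehat{\Gamma}$, hence trivial. As $\Gamma/[\Gamma,\Gamma]$ is a finitely generated abelian group with no non-trivial finite quotients, it is trivial, i.e. $\Gamma$ is perfect. The same argument applies to every finite index subgroup $\Delta\le\Gamma$ whose closure in $\widehat{\Gamma}$ is perfect.

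Next we reduce to the linear image $\Lambda=\varrho(\Gamma)\le\mathsf{GL}_n(\C)$, which is a finitely generated linear group. By Malcev's theorem $\Lambda$ is residually finite, and every finite quotient of $\Lambda$ is a finite quotient of $\Gamma$, hence of $\widehat{\Gamma}$. In particular every finite quotient of $\Lambda$ is a quotient of a quasi-semisimple profinite group, so it is itself a finite quasi-semisimple group. By Jordan's theorem, every finite subgroup of $\mathsf{GL}_n(\C)$, and hence every finite quotient of $\Lambda$ via the finite images from reductions of a finitely generated subring, has an abelian normal subgroup of index at most $b(n)$. More directly, the Tits alternative / Zariski-closure approach gives the following. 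Let $\mathbf{H}$ be the Zariski closure of $\Lambda$. If $\Lambda$ is infinite, then $\mathbf{H}^\circ$ is a positive-dimensional algebraic group, and $\Lambda\cap\mathbf{H}^\circ$ has finite index in $\Lambda$.

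The key step, and the main obstacle, is to derive a contradiction from $\Lambda$ being infinite. We would try the following: by specialisation (choosing ring homomorphisms from the finitely generated coefficient ring to finite fields), $\Lambda$ has many finite quotients that approximate it. A quasi-semisimple finite group mapping onto quotients of a group with a non-trivial connected part should then produce abelian or solvable sections. These arise from the radical of $\mathbf{H}^\circ$, which would contradict perfectness of all finite index subgroups in the relevant layers. Alternatively, and if $\mathbf{H}^\circ$ is semisimple, there are unboundedly many simple sections of bounded Lie rank in characteristic $\ell$ for many primes $\ell$.

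The cleaner route we would attempt first uses the fact that $\widehat{\Gamma}$ has only finitely many open normal subgroups of any given bounded index-type with prescribed quotient structure. More precisely, all finite quotients of $\Lambda$ are semisimple modulo centre. By Jordan, every finite quotient of $\Lambda$ has an abelian normal subgroup of index $\le b(n)$, and then perfectness forces each finite quotient to have order bounded in terms of $b(n)$ and the Schur multipliers. So the finite quotients of the residually finite group $\Lambda$ have bounded order, whence $\Lambda$ is finite.

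The delicate point is justifying that finite quotients of $\Lambda$ (not merely finite subgroups of $\mathsf{GL}_n$) satisfy a Jordan-type bound. Here we would replace Jordan by the congruence-quotient structure of finitely generated linear groups, which says their reductions have bounded-index subgroups with solvable-by-(Lie type of bounded rank) structure. We expect this to be the main technical hurdle.
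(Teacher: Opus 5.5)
You have a genuine gap: the step that carries the whole proof is left open, and the route you call cleaner rests on a false claim. Jordan's theorem concerns finite \emph{subgroups} of $\mathsf{GL}_n(\C)$. Finite \emph{quotients} of a finitely generated linear group satisfy no uniform bound of this kind, and neither do its reductions modulo maximal ideals, which lie in $\mathsf{GL}_n(\mathbb{F}_q)$ where Jordan fails. For instance, $\SL_3(\Z)$ maps onto $\SL_3(\mathbb{F}_p)$ for every prime $p$; the abelian normal subgroups of these groups are central, of unbounded index. For a residually finite $\Lambda$, the assertion that all finite quotients have bounded order is equivalent to $\Lambda$ being finite. So that step is circular unless quasi-semisimplicity is used in an essential way, which you defer to an unspecified congruence-quotient structure. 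The Zariski-closure sketch does not close the gap either. In the semisimple case, many simple sections of bounded Lie rank in many characteristics are compatible with a quasi-semisimple completion (compare $\prod_p \SL_2(\mathbb{F}_p)$), so no contradiction arises there.

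The missing idea is to use $p$-power congruence layers. Their kernels are $p$-groups and can therefore only be central in a quasi-semisimple group.

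The paper does this via the fact that a finitely generated linear group $\Delta$ has a finite index normal subgroup that is residually a finite $p$-group. If $\Delta$ were infinite, an open subgroup of $\widehat{\Delta}$ would map onto an infinite pro-$p$ group. Every open subgroup of $\widetilde{G} \cong \prod_i \widetilde{S}_i$ contains a perfect open subgroup $\prod_{i>k} \widetilde{S}_i$, so one would obtain a non-trivial perfect pro-$p$ group, which is impossible.

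Your own ingredients, perfectness and Schur multipliers, can also be made to work:
\begin{enumerate}
\item[(a)] Write $\Lambda \le \mathsf{GL}_n(R)$ for a finitely generated subring $R \subseteq \C$, and pick a maximal ideal $\mathfrak{m}$ of $R$, so that $R/\mathfrak{m}$ is finite of characteristic $p$, say.
\item[(b)] Let $\Lambda_k$ be the image of $\Lambda$ in $\mathsf{GL}_n(R/\mathfrak{m}^k)$. Each $\Lambda_k$ is a finite quotient of $\Gamma$, hence quasi-semisimple.
\item[(c)] The kernel $\ker(\Lambda_k \to \Lambda_1)$ is a normal $p$-subgroup. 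It is therefore central, because its image in $\Lambda_k/\mathrm{Z}(\Lambda_k)$ is a solvable normal subgroup of a product of non-abelian simple groups.
\item[(d)] Hence $\Lambda_k$ is a perfect central extension of $\Lambda_1$, giving $\lvert \Lambda_k \rvert \le \lvert \Lambda_1 \rvert \, \lvert M(\Lambda_1) \rvert$ for all $k$.
\item[(e)] By Krull's intersection theorem $\bigcap_k \mathfrak{m}^k = 0$, so $\Lambda$ embeds into the finite group $\varprojlim_k \Lambda_k$.
\end{enumerate}
Without an argument of this kind the proposal does not prove the statement.
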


\begin{proof}
  The complex representations $\varrho$ of~$\Gamma$ that factor
  through the profinite
  completion~$\Gamma \hookrightarrow \widehat{\Gamma}$ are those that
  have finite image~$\Gamma \varrho$.  For a contradiction, assume
  that $\varrho \colon \Gamma \to \mathsf{GL}_n(\C)$ is a finite
  dimensional representation with infinite image
  $\Delta = \Gamma \varrho$.  Then $\Delta$ is a finitely generated
  linear group and hence residually finite.  Moreover, there exist a
  prime~$p$ and a finite-index normal subgroup
  $\Sigma \trianglelefteq \Delta$ that is residually a finite
  $p$-group; compare with \cite[Window~7, Proposition~9]{LS03}.  Thus
  $\widehat{\Delta}$ admits an open subgroup $D$ that maps onto the
  infinite pro-$p$ group $\widehat{\Sigma}_p$.  The quasi-semisimple
  profinite group $G= \widehat{\Gamma}$ maps onto $\widehat{\Delta}$
  and so does its universal cover~$\widetilde{G}$.  Write
  $G/\mathrm{Z}(G) \cong \prod_{i \in \N} S_i$, with finite
  non-abelian simple factors~$S_i$.  Then
  $\widetilde{G} \cong \prod_{i \in \N} \widetilde{S}_i $ admits an
  open subgroup, contained in the preimage of~$D$, that is perfect and
  maps onto an open subgroup of $\widehat{\Sigma}_p$.  But a
  non-trivial pro-$p$ group cannot be perfect, a contradiction.
\end{proof}

As indicated in the introduction, \cref{cor:prof-compl} is a
consequence of
\cref{thm:main-result,thm:profinite-completion,pro:factors-through-compl},
while \cref{cor:growing-rank-invariant-alpha} follows directly from
\cref{thm:main-result}.
 
%%%
\appendix
%%%

\section{Theorem~\ref{thm:approximation} revisited}
\label{sec:AKOV-revisited}

In this section we prove a variant of \cref{thm:approximation} that
applies to finite simple groups of Lie type, following closely the
strategy that was developed in \cite[Proof of Theorem~3.1]{AKOV16}.
This also provides the opportunity to supply some additional details
and to streamline parts of the argument presented
in~\cite{AKOV16}.

%%%
\subsection{Basic set-up based on Deligne--Lusztig theory}
We consider finite groups of Lie type, as introduced in
\cref{sec:finite}.  The irreducible characters of such a group~
$\mathbf{G}^F$ can be decomposed into Deligne--Lusztig series, which
can be parametrised by conjugacy classes of semisimple elements in a
dual group~$(\mathbf{G}^*)^{F^*}\!$.
We discuss briefly the most relevant ingredients;
details can be found, for instance, in
\cite{Car93,DM20,GM20}.
% \footnote{DECIDE whether we use DM or GM as a default reference, or
% both}

Let $\mathbf{G}$ be a connected reductive algebraic group, defined
over an algebraically closed field of positive characteristic, as in
\cref{sec:finite}.  Let $F$ be a Frobenius endomorphism providing an
$\mathbb{F}_q$-structure on~$\mathbf{G}$.  The dual group
$\mathbf{G}^*$, equipped with a dual Frobenius endomorphism $F^*$, is
a connected reductive group whose root datum is dual the one of~$G$.
Deligne--Lusztig theory hinges on certain virtual
characters~$R_\mathbf{T}^\mathbf{G}(\theta)$ that are associated to
irreducible characters $\theta \in \Irr(\mathbf{T}^F)$, where
$\mathbf{T}$ denotes an $F$-stable maximal torus of~$\mathbf{G}$.  Any
two such Deligne–Lusztig characters are either equal or orthogonal to
each other, see \cite[Corollary~2.2.10]{GM20}.  Moreover, if
$\mathbf{T}_1$ and $\mathbf{T}_2$ are two $F$-stable maximal tori
of~$\mathbf{G}$ and if $\theta_1 \in \Irr(\mathbf{T}_1^F)$
and~$\theta_2 \in \Irr(\mathbf{T}_2^F)$, then the equality
$R_{\mathbf{T}_1}^\mathbf{G}(\theta_1)=R_{\mathbf{T}_2}^\mathbf{G}(\theta_2)$
holds if and only if there exists some $x \in \mathbf{G}^F$ such
that~$\mathbf{T}_1^{\, x} = \mathbf{T}_2$
and~${\theta_1^{\, x} = \theta_2}$.  Moreover,
$\mathbf{G}^F$-conjugacy classes of pairs $(\mathbf{T},\theta)$
correspond bijectively to $(\mathbf{G}^*)^{F^*}\!$-conjugacy classes
of pairs $(\mathbf{T}^*,g)$ for the dual
group~$(\mathbf{G}^*)^{F^*}\!$, where $\mathbf{T}^*$ is an
$F^*$-stable maximal torus and $g$ is a (semisimple)
  element of $(\mathbf{T}^*)^{F^*}\!$; see
\cite[Proposition~11.1.16]{DM20}.
% \footnote{PERHAPS refer to \cite[Remark~2.3.21]{GM20} or
% \cite[Corollary~2.5.14]{GM20} instead?}
Accordingly, we write
\[
  R_{\mathbf{T}^*}^\mathbf{G}(g) = R_\mathbf{T}^\mathbf{G}(\theta)
\]
in this situation.

The rational series $\mathcal{E}(\mathbf{G}^F,g)$ of irreducible
characters of $\mathbf{G}^F$ associated with the semisimple
element~$g$ is the set of irreducible characters of $\mathbf{G}^F$
which occur in some Deligne--Lusztig
character~$R_{\mathbf{T}^*}^{\mathbf{G}}(g)$, for some
$F^*$-stable maximal torus $\mathbf{T}^*$ such that
$g \in (\mathbf{T}^*)^{F^*}\!$.  The series
$\mathcal{E}(\mathbf{G}^F,1)$ corresponding to the trivial element is
the set of unipotent characters of $\mathbf{G}^F\!$.  This set-up
produces a partition
\begin{equation} \label{equ:DL-Irr-partition}
  \Irr(\mathbf{G}^F) = \bigsqcup_{g \in \mathcal{G}_\mathrm{s}}
  \mathcal{E}({\mathbf{G}}^{F},g),
\end{equation}
where $\mathcal{G}_\mathrm{s}$ denotes a set of representatives for
the conjugacy classes of semisimple elements in
$(\mathbf{G}^*)^{F^*}\!\!$; compare with \cite[Theorem~2.6.2]{GM20}.

The characters occurring in a non-unipotent rational series
$\mathcal{E}({\mathbf{G}}^{F},g)$ are related to the unipotent
characters of the centraliser $\mathrm{C}_{\mathbf{G}^*}(g)^{F^*}\!$,
which is a typically smaller reductive group.  If the
centre~$\mathrm{Z}(\mathbf{G})$ is not connected, then the
centraliser~$\mathrm{C}_{\mathbf{G}^*}(g)$ may not be connected.  In
order to incorporate this more complicated situation into our
framework, we denote
by~$\mathcal{E}(\mathrm{C}_{\mathbf{G}^*}(g)^{F^*}\!,1)$ the set of
irreducible constituents of the induced unipotent characters of the
connected part of the centraliser; compare
with~\cite[Remark~2.6.26]{GM20}.  For $(\mathbf{G},F)$ and
$(\mathbf{G}^*,F^*)$ in duality, the cardinality of the group
$\mathbf{G}^F$ equals that of~$(\mathbf{G}^*)^{F^*}\!$; see
\cite[Example~1.6.19]{GM20}.

With these preparations we can now state
Lusztig's `Jordan decomposition of characters'.  For every semisimple
element~$g \in (\mathbf{G}^*)^{F^*}\!$, there is a bijection
\[
  \psi_g \colon \mathcal{E}(\mathbf{G}^F,g) \to
  \mathcal{E}(\mathrm{C}_{\mathbf{G}^*}(g)^{F^*}\!,1)
\]
such that
\begin{equation}
  \label{equ:degree}
  \chi(1) =
  \bigl\vert (\mathbf{G}^*)^{F^*}\! :
  \mathrm{C}_{\mathbf{G}^*}(g)^{F^*} \bigr\vert_{p'} \,\, \psi_g(\chi)(1)
  \qquad \text{for $\chi \in \mathcal{E}(\mathbf{G}^F,g) $},
\end{equation}
where $|\cdot|_{p'}$ denotes the $p$-prime part of a natural number;
see \cite[Theorem~11.5.1 and
Proposition~11.5.6]{DM20}. %\footnote{DELETE alternative reference\cite[Theorem~2.6.22 and Remark~2.6.26]{GM20}?}
The study of
unipotent characters of~$\mathbf{G}^F$ reduces further to the case
when $\mathbf{G}$ is simple of adjoint type; see \cite[Proposition
2.3.15 and Remark~4.2.1]{GM20}.  A detailed description of unipotent
characters can be found in \cite[Chapter~4]{GM20}; in particular, the
degrees of all irreducible unipotent characters are known.

We are interested in the irreducible representations of finite simple
groups of Lie type which arise as
$\mathbf{G}^F/ \mathrm{Z}(\mathbf{G}^F)$ from~\cref{thm:tits}.  For
such a group, inflation yields a natural bijection
\begin{equation}
  \label{equ:irr-G/Z}
  \Irr \bigl( \mathbf{G}^F/ \mathrm{Z}(\mathbf{G}^F) \bigr) \overset{\mathrm{bij.}}{\longrightarrow} \left\{\chi\in \Irr
    (\mathbf{G}^F) \mid \mathrm{Z}(\mathbf{G}^F) \subseteq \ker \chi \right\}.
\end{equation}
For every $\chi \in \Irr(\mathbf{G}^F)$ there exists a pair
$(\mathbf{T},\theta)$ such that
$\langle R_\mathbf{T}^\mathbf{G}(\theta),\chi\rangle\neq 0$.
Moreover, the character values of $\chi$ on elements of the centre
$\mathrm{Z}(\mathbf{G})^F$ are completely determined by the values of
the corresponding linear character~$\theta$ of the torus
  $\mathbf{T}^F$; compare with \cite[Proposition~2.2.20]{GM20}.  This
is the starting point for obtaining the following proposition, which
is a consequence of the proof of \cite[Lemma~4.4]{NT13} and
\cite[Remark~4.6]{NT13}.

\begin{proposition}
  \label{pro:repr-for-simple-groups}
  Let $\mathbf{G}$ be a connected, simply connected simple algebraic
  group equipped with a Frobenius endomorphism~$F$ such that
  $\mathbf{G}^F / \mathrm{Z}(\mathbf{G}^F)$ is a finite simple group
  of Lie type.  Let $\chi \in \Irr(\mathbf{G}^F)$ be contained in the
  rational series $\mathcal{E}(\mathbf{G}^F,g)$ associated to a
  semisimple element $g \in (\mathbf{G}^*)^{F^*}\!$.

  Then $\mathrm{Z}(\mathbf{G}^F) \subseteq\ker \chi$ holds if and only
  if $g \in \bigl[ (\mathbf{G}^*)^{F^*}\!, (\mathbf{G}^*)^{F^*} \bigr]$.
\end{proposition}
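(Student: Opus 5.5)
We plan to compute the central character of $\chi$ through Deligne--Lusztig theory and compare it with the restriction of a linear character of the dual group.

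Fix an $F^*$-stable maximal torus $\mathbf{T}^*$ with $g \in (\mathbf{T}^*)^{F^*}$ and $\langle R_{\mathbf{T}^*}^{\mathbf{G}}(g), \chi \rangle \ne 0$. Let $(\mathbf{T},\theta)$ be the dual pair. Since $\mathbf{G}$ is simply connected, $\mathrm{Z}(\mathbf{G}^F) = \mathrm{Z}(\mathbf{G})^F \subseteq \mathbf{T}^F$. By \cite[Proposition~2.2.20]{GM20}, every $z \in \mathrm{Z}(\mathbf{G})^F$ satisfies
\[
  \chi(z) = \chi(1)\,\theta(z)/\theta(1) = \chi(1)\,\theta(z).
\]
Hence $\mathrm{Z}(\mathbf{G}^F) \subseteq \ker \chi$ if and only if $\theta$ is trivial on $\mathrm{Z}(\mathbf{G})^F$.

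We then translate this condition to the dual side using the standard duality between central elements and linear characters. There is a natural isomorphism
\[
  \mathrm{Z}(\mathbf{G})^F \cong \mathrm{Irr}\bigl((\mathbf{G}^*)^{F^*} / [(\mathbf{G}^*)^{F^*},(\mathbf{G}^*)^{F^*}]\bigr).
\]
Here $\mathbf{G}^*$ is of adjoint type, and $(\mathbf{G}^*)^{F^*}/\mathrm{O}^{p'}$ is abelian, dual to $\mathrm{Z}(\mathbf{G})^F$; see \cite[Proposition~11.4.12]{DM20} or \cite[Section~2.5]{GM20}. This identification is compatible with the torus duality $\mathrm{Irr}(\mathbf{T}^F) \cong (\mathbf{T}^*)^{F^*}$. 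Concretely, for $z \in \mathrm{Z}(\mathbf{G})^F$ with associated linear character $\hat z$ of $(\mathbf{G}^*)^{F^*}$, we expect
\[
  \theta(z) = \hat z(g).
\]
This is the content of \cite[Lemma~4.4]{NT13}, whose proof we would follow. Granting this, $\theta$ is trivial on $\mathrm{Z}(\mathbf{G})^F$ if and only if $g$ lies in the kernel of every linear character of $(\mathbf{G}^*)^{F^*}$, that is, $g \in [(\mathbf{G}^*)^{F^*},(\mathbf{G}^*)^{F^*}]$. This is the claim.

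The main obstacle is the compatibility formula $\theta(z) = \hat z(g)$. Its proof unwinds the construction of the bijection $(\mathbf{T},\theta) \leftrightarrow (\mathbf{T}^*,g)$ via $X(\mathbf{T}) \cong Y(\mathbf{T}^*)$ and the Lang map. One also checks that the right-hand side is independent of the choice of $\mathbf{T}^*$ and of the conjugacy class representative; this holds because $\hat z$ is a class function. In addition, the derived subgroup of $(\mathbf{G}^*)^{F^*}$ must be identified as the image of $(\mathbf{G}^*_{\mathrm{sc}})^{F^*}$. This identification is exactly where the hypothesis that $\mathbf{G}^F/\mathrm{Z}(\mathbf{G}^F)$ is simple, which excludes the exceptions of \cref{thm:tits}, is used, via \cite[Proposition~24.21]{MT11} together with \cite[Remark~4.6]{NT13}.
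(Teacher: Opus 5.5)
Your proposal is correct and follows essentially the route the paper indicates, which it only sketches by citation rather than writing out. By \cite[Proposition~2.2.20]{GM20}, the central character of $\chi$ is $\theta$ restricted to $\mathrm{Z}(\mathbf{G})^F$. The proof of \cite[Lemma~4.4]{NT13} gives the compatibility $\theta(z)=\hat z(g)$ with linear characters of $(\mathbf{G}^*)^{F^*}$. Finally, \cite[Proposition~24.21]{MT11} together with \cite[Remark~4.6]{NT13} identifies $\bigl[(\mathbf{G}^*)^{F^*},(\mathbf{G}^*)^{F^*}\bigr]$ with the image of $(\mathbf{G}^*_{\mathrm{sc}})^{F^*}$, and that identification is exactly where the simplicity hypothesis enters.
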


%%%
\subsection{A variant of Theorem~\ref{thm:approximation}}

The proof of \cref{thm:approximation} uses the Deligne--Lusztig
decomposition of characters for finite groups of Lie type, as sketched
above.  Our aim is to use \cref{pro:repr-for-simple-groups} in order
to arrive at a version of \cref{thm:approximation} that applies to
finite simple groups of Lie type.  We follow the framework developed
in \cite[Section~3]{AKOV16}.  However, we employ \emph{rational}
Lusztig series instead of \emph{geometric} ones; this by-passes a
detour that was taken in~\cite{AKOV16} due to a misprint in the first
edition of~\cite{DM20}.  Additionally, we provide more details in some
critical passages concerning certain enumerative bounds.  Finally, the
more complicated setting considered here necessitates some supplements
to the arguments familiar from~\cite{AKOV16}.

The notion of a Lie type was described in \cref{def:lie-type-basic}.
Further we recall that, for an irreducible root system~$\Phi$, we
write $\boldsymbol{\Lambda}_\Phi$ to denote the collection of Lie
types $\lambda$ with~$\Phi_\lambda \cong \Phi$ and $\mathcal{Q}$ to
denote the set of all prime powers.

\begin{theorem} \label{thm:Approximation-simple-groups} Let $\Phi$ be
  an irreducible root system.  Then there exist a constant $D \in \R$,
  a finite set $\boldsymbol{d}(\Phi)\in \mathcal{A}^+$, and subsets
  $\boldsymbol{d}(\lambda,q) \subseteq \boldsymbol{d}(\Phi)$ for
  $(\lambda,q)\in \boldsymbol{\Lambda}_\Phi\times \mathcal{Q}$ whose
  union is $\boldsymbol{d}(\Phi)$ such that the following holds.
  
  For every $q \in \mathcal{Q}_{>3}$ and every finite simple group
  $\mathbf{G}^F / \mathrm{Z}(\mathbf{G}^F)$ of Lie type
  $\lambda \in \boldsymbol{\Lambda}_\Phi$, where $\mathbf{G}$ is a connected,
  simply connected simple $\overline{\mathbb{F}_q}$-algebraic group
  and $F$ is a Frobenius endomorphism defining an
  $\mathbb{F}_q$-structure, we have
  \begin{equation} \label{equ:AKOV-variant-claim1}
    \zeta_{\mathbf{G}^F/\mathrm{Z}(\mathbf{G}^F)} - 1 \, \sim_D \,
    \xi_{\boldsymbol{d}(\lambda, q),q}.
  \end{equation}
  Moreover, $(\rk(\Phi), \lvert \Phi^+ \rvert) \in \boldsymbol{d}(\Phi)$ and
  \begin{equation} \label{equ:AKOV-variant-claim2}
    \zeta_{\mathbf{G}^F/\mathrm{Z}(\mathbf{G}^F)} \, \sim_D \, 1 +
    q^{\rk(\Phi) - \lvert \Phi^+ \rvert s}.
  \end{equation}
\end{theorem}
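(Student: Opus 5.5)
The plan is to follow the proof of \cref{thm:approximation} in \cite[Section~3]{AKOV16}, but now summing only over the characters of $\mathbf{G}^F$ with $\mathrm{Z}(\mathbf{G}^F)$ in their kernel. Via the bijection \eqref{equ:irr-G/Z} and the partition \eqref{equ:DL-Irr-partition}, combined with \cref{pro:repr-for-simple-groups}, we write
\[
  \zeta_{\mathbf{G}^F/\mathrm{Z}(\mathbf{G}^F)}(s) = \sum_{g} \, \sum_{\chi \in \mathcal{E}(\mathbf{G}^F,g)} \chi(1)^{-s},
\]
where $g$ runs over representatives of the semisimple classes of $(\mathbf{G}^*)^{F^*}$ lying in the derived subgroup $[(\mathbf{G}^*)^{F^*},(\mathbf{G}^*)^{F^*}]$. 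By the Jordan decomposition \eqref{equ:degree}, each inner sum equals $\lvert (\mathbf{G}^*)^{F^*} : \mathrm{C}_{\mathbf{G}^*}(g)^{F^*}\rvert_{p'}^{-s}$ times the unipotent zeta function of $\mathrm{C}_{\mathbf{G}^*}(g)^{F^*}$, where non-connected centralisers are handled through the induced unipotent characters.

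Next we group the elements $g$ by the "type" of their centraliser. Following \cite{AKOV16}, there are finitely many possible pairs consisting of the $\mathbf{G}^*$-conjugacy class of the connected centraliser (a reductive subgroup of maximal rank, described by a closed subsystem $\Psi \subseteq \Phi^\vee$ together with a Frobenius twist) and the component group. This finiteness depends only on $\Phi$, since $\lvert \mathrm{Z}(\mathbf{G})\rvert \le \rk(\Phi)+1$. For each type we estimate three quantities.
\begin{enumerate}
\item The index $\lvert (\mathbf{G}^*)^{F^*} : \mathrm{C}^{F^*} \rvert_{p'}$ equals $q^{n}$ up to a bounded constant, where $n = \lvert \Phi^+\rvert - \lvert \Psi^+\rvert$.
\item The number of such classes is $q^{m}$ up to a constant, with $m$ the dimension of the centre of the centraliser, counted by Möbius inversion over the lattice of subsystems.
\item The unipotent zeta function of the centraliser is $\sim_C \sum q^{-e s}$, where $e$ runs over the degrees of the unipotent degree polynomials.
\end{enumerate}
These estimates yield exponents $(m,n)$ lying in a finite set $\boldsymbol{a}(\Phi)$, exactly as in \cref{thm:approximation}.

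The new ingredient, and the main obstacle, is item~2 restricted to $g$ in the derived subgroup of $(\mathbf{G}^*)^{F^*}$. The quotient of $(\mathbf{G}^*)^{F^*}$ by this derived subgroup is isomorphic to $\mathrm{Z}(\mathbf{G}^F)$, hence has order at most $\rk(\Phi)+1$. I would therefore argue that, for each centraliser type, either a bounded proportion of the corresponding semisimple classes lies in the derived subgroup, in which case the count remains $\asymp q^{m}$, or none of them does, in which case the type simply drops out. The subsets $\boldsymbol{d}(\lambda,q) \subseteq \boldsymbol{d}(\Phi) := \boldsymbol{a}(\Phi)$ record which types survive, and these subsets depend on $q$ only through congruence data such as $\gcd(q-1,\lvert\mathrm{Z}\rvert)$.

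To control the proportion, I would fix the torus $Z^\circ(\mathrm{C}_{\mathbf{G}^*}(g))^{F^*}$ and study its image under the determinant-type map to $(\mathbf{G}^*)^{F^*}/[(\mathbf{G}^*)^{F^*},(\mathbf{G}^*)^{F^*}]$. The fibres over an abelian group of bounded order have sizes that are either zero or comparable to the total, up to a factor of at most $\rk(\Phi)+1$. The delicate part is to ensure that the generic (regular) elements in a given fibre are still $\asymp q^m$ in number. I would handle this by inclusion–exclusion over the finitely many proper subtori, each of which contributes only $O(q^{m-1})$ elements.

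Finally, the trivial element $g=1$ always lies in the derived subgroup. The principal character contributes the $1$ on the left of \eqref{equ:AKOV-variant-claim1}, and the Steinberg character and the other unipotent characters are retained. For \eqref{equ:AKOV-variant-claim2}, we use that regular semisimple $g$ in the derived subgroup, where the centraliser is a maximal torus, number $\asymp q^{\rk(\Phi)}$ and have degree $\asymp q^{\lvert \Phi^+\rvert}$. This shows $(\rk\Phi,\lvert\Phi^+\rvert)\in\boldsymbol{d}(\lambda,q)$ for all $q>3$. The domination of all other terms by $1+q^{\rk(\Phi)-\lvert\Phi^+\rvert s}$ is inherited directly from \cref{thm:approximation}, since our sum is a subsum of $\zeta_{\mathbf{G}^F}$. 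Enlarging the constant $D$ absorbs all the bounded factors above.
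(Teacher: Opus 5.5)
Your overall reduction is sound, and two of your deviations from the paper are legitimate. First, you impose $g\in[(\mathbf{G}^*)^{F^*},(\mathbf{G}^*)^{F^*}]$ by intersecting with the kernel $B$ of a homomorphism onto a group of order at most $\rk(\Phi)+1$. The paper instead lifts along $\pi\colon\mathbf{G}^*_{\mathrm{sc}}\to\mathbf{G}^*$, using $\pi\bigl((\mathbf{G}^*_{\mathrm{sc}})^{F^*}\bigr)=[(\mathbf{G}^*)^{F^*},(\mathbf{G}^*)^{F^*}]$ to turn the restricted count into an unrestricted one in the simply connected group. Second, you deduce the upper half of \eqref{equ:AKOV-variant-claim2} from $\zeta_{\mathbf{G}^F/\mathrm{Z}(\mathbf{G}^F)}(\sigma)\le\zeta_{\mathbf{G}^F}(\sigma)$ and \cref{thm:approximation}. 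This neatly bypasses the paper's Newton-polytope comparison based on \eqref{equ:help-for-newton}.

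The gap lies in the lower bound $\gtrsim q^{m}$ for the number of semisimple elements whose connected centraliser is a prescribed $\mathbf{K}$, i.e.\ your item~2 and its restricted version. Möbius inversion over subsystems as in \cite{AKOV16}, and inclusion--exclusion over ``proper subtori'' inside fibres of your map on the torus $(\mathrm{Z}(\mathbf{K})^\circ)^{F^*}$, only work when $\mathbf{K}$ is a Levi subgroup. Suppose it is not; for example, $\mathbf{K}=\mathsf{GL}_1\times\mathsf{Sp}_2\times\mathsf{Sp}_2$, the centraliser of an element of $\mathsf{Sp}_6$ with eigenvalues $a^{\pm1},-1,-1,1,1$. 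Then:
\begin{itemize}
\item $\mathbf{L}=\mathrm{C}(\mathrm{Z}(\mathbf{K})^\circ)\supsetneq\mathbf{K}$ is a Levi subgroup with $\mathrm{Z}(\mathbf{L})^\circ=\mathrm{Z}(\mathbf{K})^\circ$;
\item every root of $\mathbf{L}$ outside $\Psi$ is trivial on $\mathrm{Z}(\mathbf{K})^\circ$, so its kernel is not a proper subtorus;
\item \emph{no} element of $(\mathrm{Z}(\mathbf{K})^\circ)^{F^*}$ has connected centraliser $\mathbf{K}$, so the elements you must count lie in non-identity cosets of $\mathrm{Z}(\mathbf{K})^\circ$ in $\mathrm{Z}(\mathbf{K})$.
\end{itemize}
Correspondingly, in the Möbius sum the terms for $\mathbf{K}$ and for $\mathbf{L}$ are both of order $q^m$, and their non-cancellation is exactly what needs proof. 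This is the situation the paper's closing remark says \cite{AKOV16} overlooked. It is handled by Case~3 of the paper's proof.

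Within your framework the repair is short. If the type occurs in $B$ at all, fix $g\in B$ with $\mathrm{C}^\circ_{\mathbf{G}^*}(g)=\mathbf{K}$. Count inside the coset $g\,(T\cap B)$, where $T=(\mathrm{Z}(\mathbf{K})^\circ)^{F^*}$; it has at least $(q-1)^m/(\rk(\Phi)+1)$ elements, all lying in $B$. For $t\in T$, each root $\beta$ of $\mathbf{L}$ outside $\Psi$ satisfies $\beta(gt)=\beta(g)\ne1$. Each root outside $\mathbf{L}$ is non-trivial on $\mathrm{Z}(\mathbf{K})^\circ$, so it satisfies $\beta(gt)=1$ for only $O(q^{m-1})$ elements $t\in T$. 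All other $gt$ have connected centraliser exactly $\mathbf{K}$. This yields your dichotomy ``zero or $\asymp q^m$'' (trivially so for bounded $q$), in parallel with the paper's device $t\mapsto t^e t_i$ in $\mathbf{G}^*_{\mathrm{sc}}$.

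Finally, run the same count for an $F^*$-stable maximal torus $\mathbf{K}$ inside $T\cap B$ itself (here $\mathbf{L}=\mathbf{K}$, so no choice of $g$ is needed). This produces regular semisimple elements in the derived subgroup only once $q$ exceeds a bound depending on $\Phi$, not for all $q>3$ as you assert. This is harmless: the theorem only requires $(\rk(\Phi),\lvert\Phi^+\rvert)\in\boldsymbol{d}(\Phi)$, and the finitely many small $q$ can be absorbed into $D$.
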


\begin{proof}
  We analyse the distribution of irreducible characters of the finite
  simple groups $\mathbf{G}^F / \mathrm{Z}(\mathbf{G}^F)$ of Lie type,
  working uniformly across the underlying parameters
  $(\lambda,q) \in \boldsymbol{\Lambda}_\Phi \times \mathcal{Q}_{>3}$.
  From \eqref{equ:DL-Irr-partition}, \eqref{equ:irr-G/Z} and
  \cref{pro:repr-for-simple-groups} we deduce that
  \begin{equation}
    \label{equ:zeta-quotient}
    \zeta_{\mathbf{G}^F / \mathrm{Z}(\mathbf{G}^F)}(s) = \sum_{g \in
      \tilde{\mathcal{G}}_\mathrm{s}} \; \sum_{\chi \in
      \mathcal{E}(\mathbf{G}^F,g)} \chi(1)^{-s},
  \end{equation}
  where $\tilde{\mathcal{G}}_\mathrm{s}$ is a set of representatives
  of $(\mathbf{G}^*)^{F^*}\!$-conjugacy classes of semisimple elements
  $g \in (\mathbf{G}^*)^{F^*}\!$ such that
  ${g \in \big[ (\mathbf{G}^*)^{F^*}\!, (\mathbf{G}^*)^{F^*} \big]}$
  and, in particular,
    ${1\in \tilde{\mathcal{G}}_\mathrm{s}}$.  Since unipotent
  characters always restrict trivially to $\mathrm{Z}(\mathbf{G}^F)$,
  we obtain
  \begin{equation}
    \label{equ:decomp-zeta-into-unip-nu}
    \zeta_{\mathbf{G}^F/\mathrm{Z}(\mathbf{G}^F)}(s) =
    \zeta^{\mathrm{u}}_{\mathbf{G}^F}(s)
    + \zeta^{\mathrm{nu}}_{\mathbf{G}^F/\mathrm{Z}(\mathbf{G}^F)}(s),
  \end{equation}
  where the notation
  \[
    \zeta_{\mathbf{G}^F}^{\mathrm{u}}(s) = \sum_{\chi\in
      \mathcal{E}(\mathbf{G}^F,1)} \chi(1)^{-s} \quad \text{and} \quad
    \zeta^{\mathrm{nu}}_{\mathbf{G}^F / \mathrm{Z}(\mathbf{G}^F)}(s) =
    \sum_{g \in \tilde{\mathcal{G}}_\mathrm{s}\smallsetminus \{1\}} \;
    \sum_{\chi \in \mathcal{E}(\mathbf{G}^F,g)} \chi(1)^{-s}
  \]
  is used to separate the contributions from unipotent and
  non-unipotent characters.
  
  Information about unipotent characters is more readily available.
  By \cite[Proposition~3.5]{AKOV16}, there exist $D_0 \in \R$, only
  depending on~$\Phi$, and finite sets
  $\boldsymbol{b}(\lambda,q) \subseteq \boldsymbol{b}(\Phi) \in
  \mathcal{A}^+$ such that
  \begin{equation} \label{equ:approx-unip}
    \zeta^{\mathrm{u}}_{\mathbf{G}^F}(s) - 1 \, \sim_{D_0} \,
    \xi_{\boldsymbol{b}(\lambda,q)}(s) \qquad \text{and} \qquad
    \zeta_{\mathbf{G}^F}^{\mathrm{u}}(s) \, \sim_{D_0} \, 1.
  \end{equation}
  
  In order to deal with the non-unipotent characters, we use
  \eqref{equ:degree} to obtain
  \[
    \zeta^{\mathrm{nu}}_{\mathbf{G}^F / \mathrm{Z}(\mathbf{G}^F)}(s) =
    \sum_{g \in \tilde{\mathcal{G}}_\mathrm{s} \smallsetminus \{1\}}
    \bigl| (\mathbf{G}^*)^{F^*} \! : \mathrm{C}_{\mathbf{G}^*}(g)^{F^*}
    \bigr|_{p'}^{\, -s} \cdot
    \zeta^{\mathrm{u}}_{\mathrm{C}_{\mathbf{G}^*}(g)^{F^*}}(s),
  \]
  where $\zeta^{\mathrm{u}}_{\mathrm{C}_{\mathbf{G}^*}(g)^{F^*}}(s)$
  collects the contribution from the unipotent characters of the
  reductive group~$\mathrm{C}_{\mathbf{G}^*}(g)^{F^*}\!$, i.e., the
  irreducible constituents of the induced unipotent characters of the
  connected part $\mathrm{C}^\circ_{\mathbf{G}^*}(g)^{F^*}\!$.  The
  index
  $\lvert \mathrm{C}_{\mathbf{G}^*}(g) :
  \mathrm{C}^\circ_{\mathbf{G}^*}(g) \rvert$ is bounded by a constant
  $D_1\in \R$, which depends only on $\Phi$; compare
  with~\cite[Theorem~2.2.14]{GM20}.  Hence
  \cite[Proposition~3.5]{AKOV16}, the analogue of
  \eqref{equ:approx-unip} for
  $\mathrm{C}^\circ_{\mathbf{G}^*}(g_s)^{F^*}\!$, shows that there is
  a constant $D_2\in \R$, which depends only on~$\Phi$, such that
  \[
    \zeta^{\mathrm{nu}}_{\mathbf{G}^F / \mathrm{Z}(\mathbf{G}^F)}(s)
    \, \sim_{D_ 1 D_2} \, \sum_{g \in
      \tilde{\mathcal{G}}_\mathrm{s}\smallsetminus\{1\}} \bigl|
    (\mathbf{G}^*)^{F^*}\! : \mathrm{C}^\circ_{\mathbf{G}^*}(g)^{F^*}
    \bigr|_{p'}^{\, -s}.
  \]

  By \cite[Proposition~26.6]{MT11}, every semisimple element
  $g \in (\mathbf{G}^*)^{F^*}\!$ lies in an $F^*$-stable maximal torus
  $\mathbf{T}$ of~$\mathbf{G}^*$.  By \cite[Proposition~25.1]{MT11},
  the number of $(\mathbf{G}^*)^{F^*}\!$-conjugacy classes of
  $F^*$-stable maximal tori is finite and bounded above in terms
  of~$\Phi$ only.  Moreover, for each $F^*$-stable maximal torus
  $\mathbf{T}$, the number of connected centralisers
  $\mathrm{C}^\circ_{\mathbf{G}^*}(g)$ of elements $g \in \mathbf{T}$
  is bounded above in terms of $\Phi$; see
  \cite[Proposition~14.2]{MT11}.  Thus we arrive at a finite
  collection of reductive algebraic subgroups~
  $\mathbf{K}_1,\dots,\mathbf{K}_N$ of~$\mathbf{G}^*$, with associated
  root systems $\Psi_1,\dots,\Psi_N$, that form a set of
  representatives for the $(\mathbf{G}^*)^{F^*}\!$-conjugacy classes
  of connected centralisers $\mathrm{C}^\circ_{\mathbf{G}^*}(g)$ for
  non-trivial semisimple
  elements~$g \in \bigl[ (\mathbf{G}^*)^{F^*}\!, (\mathbf{G}^*)^{F^*}
  \bigr]$, where $N$ is bounded above in terms of $\Phi$ only.  This
  induces a partition
  $\tilde{\mathcal{G}_\mathrm{s}}\setminus \{1\} = \bigsqcup_{i=1}^N
  \tilde {\mathcal{G}_i}$, where $\tilde{\mathcal{G}_i}$ consists of
  all $g \in \tilde{\mathcal{G}_\mathrm{s}}$ such that
  $\mathrm{C}^\circ_{\mathbf{G}^*}(g)$ is
  $(\mathbf{G}^*)^{F^*}\!$-conjugate to $\mathbf{K}_i$, and by
  adapting $\tilde{\mathcal{G}}_\mathrm{s}$, we may assume that
  $\mathrm{C}^\circ_{\mathbf{G}^*}(g) = \mathbf{K}_i$ for all
  $g \in \tilde{\mathcal{G}_i}$.
  
  Using~\cite[Corollary~24.6]{MT11} there is a constant $D_3 \in \R$,
  which depends only on~$\Phi$, such that, for each
  $i \in \{1,\ldots,N\}$ and $g \in \tilde{\mathcal{G}_i}$,
  \begin{equation} \label{equ:estimate-order-positive-roots}
    \bigl| (\mathbf{G}^*)^{F^*}\! :
    \mathrm{C}^\circ_{\mathbf{G}^*}(g)^{F^*}
    \bigr|_{p'} \, \sim_{D_3} \, q^{\dim \mathbf{G}-|\Phi^+|-\dim
      \mathbf{K}_i+|\Psi_i^+|} = q^{|\Phi^+|-|\Psi_i^+|}.
  \end{equation}
  We remark that in \eqref{equ:estimate-order-positive-roots} the
  complex variable~$s$ does not appear; the assertion holds, because
  the two quantities bound each other up to a constant factor
  uniformly across all possible~$q$.  Setting $D_4 = D_1D_2D_3$, we
  have arrived at
  \begin{equation}
    \label{equ:D4}
    \zeta^{\mathrm{nu}}_{\mathbf{G}^F / \mathrm{Z}(\mathbf{G}^F)}(s)
    \, \sim_{D_4} \,
    \sum\nolimits_{i=1}^N\,\, \lvert \tilde{\mathcal{G}_i} \rvert
    \,\,q^{-(|\Phi^+|-|\Psi_i^+|)s}.
  \end{equation}

  For the following discussion we temporarily fix
  $i \in \{1,\dots,N\}$.  We set
  \[
    \mathcal{K}_i = \{ k \in \bigl[ (\mathbf{G}^*)^{F^*}\!,
    (\mathbf{G}^*)^{F^*} \bigr] \mid k \text{ semisimple and }
    \mathrm{C}_{\mathbf{G}^*}^\circ(k) = \mathbf{K}_i \bigr\}
  \]
  and observe that for each $k \in \mathcal{K}_i$ there are
  $g \in \tilde{\mathcal{G}_i}$ and
  $x \in \mathrm{N}_{(\mathbf{G}^*)^{F^*}} (\mathbf{K}_i)$ such that
  $k = g^x$.  From~\cite[Lemma~2.2(ii)]{LS05} we deduce that the
  number of distinct conjugates of this form is bounded by a constant
  $D_5 \in \R$ that depends only on~$\Phi$.  This yields
  \begin{equation}
    \label{equ:D5}
    \lvert \tilde{\mathcal{G}_i} \rvert \, \sim_{D_5} \, \lvert
    \mathcal{K}_i \rvert.
  \end{equation}
  Let $\mathbf{G}^*_\mathrm{sc}$ be the simply connected cover
  of~$\mathbf{G}^*$ and
  $\pi \colon \mathbf{G}_\mathrm{sc}^* \to \mathbf{G}^*$ an isogeny
  with central kernel.  From \cite[Proposition~24.21]{MT11} we recall
  that $\pi$ maps $(\mathbf{G}_{\mathrm{sc}}^*)^{F^*}\!$ onto
  $[(\mathbf{G}^*)^{F^*}\! ,(\mathbf{G}^*)^{F^*}]$, because we are in the
  setting of \cref{thm:tits} with $q>3$.  In this situation we fix an
  element $h_i \in (\mathbf{G}_\mathrm{sc}^*)^{F^*}\! $ such that
  $k_i = h_i \pi \in \mathcal{K}_i$.
  From~\cite[Subsection~1.3.10~(e)]{GM20} we see that~$\pi$
  maps~$\mathrm{C}^\circ_{\mathbf{G}_\mathrm{sc}^*}(h_i)$
  onto~$\mathrm{C}_{\mathbf{G}^*}^\circ(k_i)$.  We
  set~${\mathbf{H}_i =
    \mathrm{C}^\circ_{\mathbf{G}_\mathrm{sc}^*}(h_i)}$ and consider
  the set
  \[
    \mathcal{H}_i = \bigl\{ h \in (\mathbf{G}^*_\mathrm{sc})^{F^*}\!
    \mid h \text{ semisimple } \text{and }
    \mathrm{C}_{\mathbf{G}_\mathrm{sc}^*}^\circ(h) = \mathbf{H}_i \bigr\}.
  \]
  In fact, centralisers of semisimple elements in the simply connected
  group $\mathbf{G}_\mathrm{sc}^*$ are connected, but we prefer to
  write $\mathrm{C}_{\mathbf{G}_\mathrm{sc}^*}^\circ(h)$ to match the
  notation used for~$\mathbf{G}^*$.

  The isogeny $\pi \colon \mathbf{G}_\mathrm{sc}^* \to \mathbf{G}^*$
  induces a map $\mathcal{H}_i \to \mathcal{K}_i$, whose fibres have
  size at most~$\lvert \mathrm{Z}(\mathbf{G}_\mathrm{sc}^*) \rvert$.
  Moreover, the number of conjugacy classes in
  $(\mathbf{G}_\mathrm{sc}^*)^{F^*}\!$ that map to one and the same
  conjugacy class of $(\mathbf{G}^*)^{F^*}\!$ under $\pi$ is at most
  $\lvert \mathrm{Z}(\mathbf{G}_\mathrm{sc}^*) \rvert$.  Thus the
  number of $(\mathbf{G}_\mathrm{sc}^*)^{F^*}\!$-conjugacy classes of
  connected centralisers $\mathrm{C}_{\mathbf{G}_\mathrm{sc}^*}(h)$
  for semisimple elements $h \in (\mathbf{G}_\mathrm{sc}^*)^{F^*}\!$ is
  bounded above in terms of $\Phi$ only, and applying the arguments
  that lead up to \eqref{equ:D5} for $\mathbf{G}_\mathrm{sc}^*$ in
  place of $\mathbf{G}^*$ we deduce that there is a constant $D_6$
  that depends on $\Phi$ only such that
  \begin{equation} \label{equ:D6}
    \lvert \mathcal{K}_i \rvert \,\sim_{D_6} \, \lvert \mathcal{H}_i
    \rvert.
  \end{equation}
  
  Now let $\mathbf{T}_i$ be a maximal torus of $\mathbf{H}_i$, hence
  also a maximal torus in $\mathbf{G}_\mathrm{sc}^*$.  Let~$\Delta_i$
  denote the set of roots of~$\mathbf{H}_i$ and $\Lambda_i $ the set
  of roots of $\mathbf{G}_\mathrm{sc}^*$, with respect
  to~$\mathbf{T}_i$.  We observe that $\Lambda_i$ forms a root system
  isomorphic to~$\Phi$.  For every $\alpha\in\Delta_i$, the identity
  $\alpha(h_i)=1$ holds, and \cite[\S~2.2.13]{GM20} shows that
  \[
    \mathbf{H}_i = \bigl\langle \mathbf{T}_i\cup
    \bigcup\nolimits_{\alpha\in \Delta_i} \mathbf{U}_\alpha
    \bigr\rangle,
  \]
  where $\mathbf{U}_\alpha$ denotes the root subgroup of
  $\mathbf{H}_i $ associated to $\alpha$.
  
  We observe that
  $\mathcal{H}_i \subseteq \mathrm{Z}(\mathbf{H}_i)^{F^*}\!$.  Since
  $\mathbf{H}_i$ is a connected reductive group, its radical is the
  central torus $\mathrm{Z}(\mathbf{H}_i)^\circ$, its derived group
  $[\mathbf{H}_i,\mathbf{H}_i]$ is semisimple, and
  $\mathbf{H}_i=\mathrm{Z}(\mathbf{H}_i)^\circ[\mathbf{H}_i,\mathbf{H}_i]$
  with
  $\lvert
  \mathrm{Z}(\mathbf{H}_i)^\circ\cap[\mathbf{H}_i,\mathbf{H}_i] \rvert
  < \infty$; compare with \cite[Proposition 6.20, Theorem 8.21, and
  Corollary 8.22]{MT11}.  Thus
  $\mathrm{Z}(\mathbf{H}_i) = \mathrm{Z}(\mathbf{H}_i)^\circ \, Z$,
  where $Z = \mathrm{Z}([\mathbf{H}_i,\mathbf{H}_i])$ is finite.  The
  order $\lvert Z \rvert$ and thus the exponent $e$ of $Z$, a divisor
  of $\lvert Z \rvert$, are bounded by a constant that depends only
  on~$\Delta_i$, hence they can be bounded by a constant $C_1 \in \R$
  that depends only on~$\Lambda_i \cong \Phi$.

  Next we distinguish three cases and obtain two further constants
  $ C_2, C_3 \in \R$, each depending only on~$\Phi$, such that
  \begin{equation} \label{equ:mathcal-H-i-bound} \lvert \mathcal{H}_i
    \rvert \,\sim_C\, q^{\dim \mathrm{Z}(\mathbf{H}_i)} \qquad
    \text{for} \quad  C = \max \{C_1, C_2, C_3 \}.
  \end{equation}

  \smallskip
    
  \noindent \textit{Case 1:} $\dim \mathrm{Z}(\mathbf{H}_i) = 0$.  In
  this situation we have
  $\lvert \mathcal{H}_i \rvert \le \lvert Z \rvert$ and
  $\lvert \mathcal{H}_i \rvert \sim_{C_1} 1$.

  \smallskip
  
  \noindent \textit{Case 2:} $\dim \mathrm{Z}(\mathbf{H}_i) \ge 1$ and
  $\mathbf{H}_i$ is a Levi subgroup of~$\mathbf{G}_\mathrm{sc}^*$ so
  that
  $\mathbf{H}_i =
  \mathrm{C}_{\mathbf{G}_\mathrm{sc}^*}(\mathrm{Z}(\mathbf{H}_i)^\circ)$;
  compare with \cite[Proposition~12.6]{MT11}.  From
  $\lvert \mathrm{Z}(\mathbf{H}_i) : \mathrm{Z}(\mathbf{H}_i)^\circ
  \rvert \le \lvert Z \rvert \le C_1$ we deduce that
  \begin{equation}
    \label{equ:connected-part-of-the-center}
    \mathcal{H}_i \,\sim_{C_1}\, \mathcal{T}_{\mathbf{H}_i}, \qquad
    \text{where} \quad 
    \mathcal{T}_{\mathbf{H}_i}=
    \mathrm{Z}(\mathbf{H}_i)^\circ\smallsetminus \{ h \in
    \mathrm{Z}(\mathbf{H}_i)^\circ\mid
    \mathrm{C}^\circ_{\mathbf{G}_\mathrm{sc}^*}(h) \supsetneqq
    \mathbf{H}_i \}.
  \end{equation}
  From \cite[Theorem~8.17]{MT11} we see that
  $\mathrm{Z}(\mathbf{H}_i) = \bigcap_{\alpha \in \Delta_i}\ker
  \alpha$ and furthermore
  \[
    \mathcal{T}_{\mathbf{H}_i}
    % = \Big( \bigcap\nolimits_{\alpha\in
    %   \Delta_i}\ker \alpha \Big)^\circ \;\smallsetminus\; \Big(
    % \bigcup\nolimits_{\beta\in \Lambda_i\smallsetminus\Delta_i} \ker
    % \beta \Big)
    = \mathrm{Z}(\mathbf{H}_i)^\circ \;\smallsetminus\;
    \bigcup\nolimits_{\beta\in \Lambda_i\smallsetminus\Delta_i} \bigl(
    \mathrm{Z}(\mathbf{H}_i) \cap \ker \beta \bigr).
  \]
  If we had
  $\dim(\mathrm{Z}(\mathbf{H}_i) \cap \ker \beta)=\dim
  (\mathrm{Z}(\mathbf{H}_i))$ for some
  $\beta\in\Lambda_i \smallsetminus \Delta_i$ it would follow that
  $\mathrm{Z}(\mathbf{H}_i)^\circ\subseteq \ker\beta$, and hence
  $\mathbf{U}_\beta\subseteq
  \mathrm{C}_{\mathbf{G}_\mathrm{sc}^*}(\mathrm{Z}(\mathbf{H}_i)^\circ)
  = \mathbf{H}_i$,
  in contradiction to our initial set-up.  Hence for every
  $\beta \in \Lambda_i\smallsetminus\Delta_i$, the group
  $(\mathrm{Z}(\mathbf{H}_i) \cap \ker \beta)^\circ$ is a proper
  subtorus of~$\mathrm{Z}(\mathbf{H}_i)^\circ$.  Moreover,
  $\lvert (\mathrm{Z}(\mathbf{H}_i) \cap \ker \beta) :
  (\mathrm{Z}(\mathbf{H}_i) \cap \ker \beta)^\circ \rvert$, viz.\ the
  number of irreducible components of
  $\mathrm{Z}(\mathbf{H}_i) \cap \ker \beta$, is bounded by some
  constant depending only on $\Phi$; see, for instance,
  \cite[Corollary~9.7.9]{Gro67}.  A basic bound for the number of
  $F^*$-fixed points of the torus~$\mathrm{Z}(\mathbf{H}_i)^\circ$ is
  \begin{equation} \label{equ:asymp-for-torus}
    (q-1)^{\dim \mathrm{Z}(\mathbf{H}_i)} \le \lvert
    (\mathrm{Z}(\mathbf{H}_i)^\circ)^{F^*}\!  \rvert \le
    (q+1)^{\dim \mathrm{Z}(\mathbf{H}_i)};
  \end{equation}
  compare with \cite[Lemma~1.6.6]{GM20}.
  Approximating in a similar way the numbers of $F^*$-fixed points of
  the finitely many proper subtori
  $(\mathrm{Z}(\mathbf{H}_i) \cap \ker \beta)^\circ$, we deduce that
  $\lvert \mathcal{H}_i \rvert \sim_{C_{2}}q^{\dim
    \mathrm{Z}(\mathbf{H}_i)}$ for a constant $C_{2} \in \R$ that
  depends only on~$\Lambda_i \cong \Phi$.

  \smallskip
  
  \noindent \textit{Case 3:} $\dim \mathrm{Z}(\mathbf{H}_i) \ge 1$ and
  $\mathbf{H}_i$ is not a Levi subgroup of $\mathbf{G}_\mathrm{sc}^*$.
  By \cite[Propositions~12.10]{MT11}, the subgroup
  $\mathbf{L}_i =
  \mathrm{C}_{\mathbf{G}_\mathrm{sc}^*}(\mathrm{Z}(\mathbf{H}_i)^\circ)$
  is a Levi subgroup of $\mathbf{G}_\mathrm{sc}^*$ with
  $\mathrm{Z}(\mathbf{L}_i)^\circ = \mathrm{Z}(\mathbf{H}_i)^\circ$
  and, in particular,
  $\dim \mathrm{Z}(\mathbf{L}_i) = \dim \mathrm{Z}(\mathbf{H}_i)$.
  Moreover, we have
  $\mathrm{C}_{\mathbf{G}_\mathrm{sc}^*}^\circ(t) \supseteq
  \mathbf{L}_i\supsetneqq \mathbf{H}_i$ for every
  $t\in \mathrm{Z}(\mathbf{H}_i)^\circ$.  We recall that
  $h_i \in \mathcal{H}_i \subseteq \mathrm{Z}(\mathbf{H}_i) =
  \mathrm{Z}(\mathbf{H}_i)^\circ Z$ and that $e$ denotes the exponent
  of~$Z$.  In particular,
  $t_i = h_i^{\, e} \in (\mathrm{Z}(\mathbf{H}_i)^\circ)^{F^*}\!$.

  From $\mathcal{H}_i \subseteq \mathrm{Z}(\mathbf{H}_i)^\circ Z$, the
  bound $\lvert Z \rvert \le C_1$ and the approximation of
  $\lvert (\mathrm{Z}(\mathbf{H}_i)^\circ)^{F^*} \rvert$ in
  \eqref{equ:asymp-for-torus} we obtain a correct asymptotic upper
  bound for $\lvert \mathcal{H}_i \rvert$ and we need to establish the
  corresponding lower bound.  We claim that
  \[
    \bigl\{ t h_i \mid t \in (\mathrm{Z}(\mathbf{H}_i)^\circ)^{F^*}\!
    \text{ with } \mathrm{C}_{\mathbf{G}_\mathrm{sc}^*}^\circ(t^e t_i)
    = \mathbf{L}_i \bigr\} \subseteq \mathcal{H}_i.
  \]
  Indeed, if $h = t h_i$ for $t \in \mathrm{Z}(\mathbf{H}_i)^\circ$
  with~$\mathrm{C}_{\mathbf{G}_\mathrm{sc}^*}^\circ(t^e t_i) =
  \mathbf{L}_i$, we deduce that
  \begin{multline*}
    \mathbf{H}_i \subseteq
    \mathrm{C}_{\mathbf{G}_\mathrm{sc}^*}^\circ(h) =
    \mathrm{C}_{\mathbf{G}_\mathrm{sc}^*}^\circ \bigl( h^e \bigr) \cap
    \mathrm{C}_{\mathbf{G}_\mathrm{sc}^*}^\circ(h) =
    \mathrm{C}_{\mathbf{G}_\mathrm{sc}^*}^\circ(t^e t_i) \cap
    \mathrm{C}_{\mathbf{G}_\mathrm{sc}^*}^\circ(t h_i) \\
    = \mathbf{L}_i\cap \mathrm{C}_{\mathbf{G}_\mathrm{sc}^*}^\circ(t
    h_i) = \mathrm{C}_{\mathbf{L}_i}^\circ(h_i) = \mathbf{H}_i,
    \qquad \text{hence} \quad
    \mathrm{C}_{\mathbf{G}_\mathrm{sc}^*}^\circ(h) = \mathbf{H}_i.
  \end{multline*}
  Hence it suffices to see that there is a constant $C_4 \in \R$
  depending only on $\Phi$ such that
  \begin{equation} \label{equ:C4-asymp}
    \bigl\vert \bigl\{ t \in (\mathrm{Z}(\mathbf{H}_i)^\circ)^{F^*}\! \mid
    \mathrm{C}_{\mathbf{G}_\mathrm{sc}^*}^\circ(t^e t_i) =
    \mathbf{L}_i \bigr\} \bigr\vert \,\sim_{C_4}\, q^{\dim
      \mathrm{Z}(\mathbf{H}_i)}.
  \end{equation}
  As the fibres of the map
  $(\mathrm{Z}(\mathbf{H}_i)^\circ)^{F^*}\!\! \to
  (\mathrm{Z}(\mathbf{H}_i)^\circ)^{F^*}\!$, $t \mapsto t^e t_i$ have
  size at most $e^{\dim \mathrm{Z}(\mathbf{L}_i)}$, the
  approximation~\eqref{equ:C4-asymp} follows from
  $\mathrm{Z}(\mathbf{L}_i)^\circ = \mathrm{Z}(\mathbf{H}_i)^\circ$,
  the elementary bound
  \[
    e^{-\dim \mathrm{Z}(\mathbf{L}_i)} \bigl\vert
    (\mathrm{Z}(\mathbf{L}_i)^\circ)^{F^*} \bigr\vert \le \bigl\vert
    \bigl\{ t^e t_i \mid t \in (\mathrm{Z}(\mathbf{L}_i)^\circ) ^{F^*}
    \bigr\}
    \bigr\vert \le \bigl\vert
    (\mathrm{Z}(\mathbf{L}_i)^\circ)^{F^*} \bigr\vert
  \]
  and an approximation, in a similar way as in Case~2, of
  the $F^*$-fixed points of
  \begin{equation}
    \mathcal{T}_{\mathbf{L}_i,t_i} = \{ t^e t_i \mid t \in 
    \mathrm{Z}(\mathbf{L}_i)^\circ \} \smallsetminus
    \bigcup\nolimits_{\beta\in \Lambda_i\smallsetminus \Sigma_i}
    \bigl( \mathrm{Z}(\mathbf{L}_i) \cap \ker \beta \bigr),
  \end{equation}
  where $\Sigma_i$ denotes the set of roots of $\mathbf{L}_i$ with
  respect to~$\mathbf{T}_i$.  Hence we deduce that
  $\lvert \mathcal{H}_i \rvert \,\sim_{C_3}\, q^{\dim
    \mathrm{Z}(\mathbf{H}_i)}$ for a constant $C_3 \in \R$ that
  depends only on $\Phi$.
  
  \smallskip

  Thus we have established~\eqref{equ:mathcal-H-i-bound}.  Using
  \eqref{equ:D4}, \eqref{equ:D5}, \eqref{equ:D6} and
  \eqref{equ:mathcal-H-i-bound}, and setting
  $D_7 = D_4 \, D_5 \, D_6 \, C $, we obtain
  \[
    \zeta^{\mathrm{nu}}_{\mathbf{G}^F/\mathrm{Z}(\mathbf{G}^F)}(s) \,
    \sim_{D_7} \, \sum\nolimits_{i=1}^N q^{\dim
      \mathrm{Z}(\mathbf{K}_i)-(|\Phi^+|-|\Psi_i^+|)s}.
  \]
  The analysis at the very end of \cite[Proof of
  Theorem~5.1]{LaLu08} allows us to deduce that
  \begin{equation}
    \label{equ:help-for-newton}
    \frac{\dim
      \mathrm{Z}(\mathbf{K}_i)}{|\Phi^+|-|\Psi^+|}\le \frac{\rk \Phi}{|\Phi^+|};
  \end{equation}
  also compare with \cite[Lemma~2.5]{LS04}.
    
  From \cite[Lemma 2.3.11]{GM20} we see that for all~$q$ exceeding a
  lower bound which depends only on $\Phi$, there exist regular
  semisimple elements in~$(\mathbf{G}^*_\mathrm{sc})^{F^*}\!$.  Such
  regular elements are sent via the isogeny
  $\pi \colon \mathbf{G}_\mathrm{sc}^* \to \mathbf{G}^*$ to regular
  elements of $(\mathbf{G}^*)^{F^*}\!$ that lie
  in~$\bigl[ (\mathbf{G}^*)^{F^*}\! , (\mathbf{G}^*)^{F^*} \bigr]$.  Thus
  there exists $i\in \{1,\dots,N\}$ such that
  \begin{equation}
    \label{equ:maximal-reached}
    (\dim \mathrm{Z}(\mathbf{K}_i), |\Phi^+|-|\Psi_i^+| ) = (\rk \Phi, |\Phi ^+|).
  \end{equation}
  Recall that the simple group $\mathbf{G}^F/\mathrm{Z}(\mathbf{G}^F)$
  is characterised up to isomorphism by its Lie type $\lambda$ and the
  parameter~$q$.  We set
  \[
    \boldsymbol{a}^*(\lambda,q) = \{(\dim \mathrm{Z}(\mathbf{K}_i),
    \lvert \Phi^+ \rvert - \lvert \Psi_i^+ \rvert )\mid 1\le i\le N\}
    \subseteq \mathcal{A}^+
  \]
  to obtain
  \begin{equation}\label{equ:approx-non-unip}
    \zeta^{\mathrm{nu}}_{\mathbf{G}^F/\mathrm{Z}(\mathbf{G}^F)}(s) \,
    \sim_{D_7 N} \, \sum_{(m,n)\in \boldsymbol{a}^*(\lambda,q)} q^{m-ns},
  \end{equation} 
  where $N$ is added as a factor because different $\mathbf{K}_i$ and
  $\mathbf{K}_j$, $i\neq j$, could contribute the same element
  to~$\boldsymbol{a}^*(\lambda,q)$.

  Using the decomposition \eqref{equ:decomp-zeta-into-unip-nu} and the
  estimates \eqref{equ:approx-unip} and \eqref{equ:approx-non-unip},
  we produce a constant $D\in \R$ such that
  \[
    \zeta_{\mathbf{G}^F/\mathrm{Z}(\mathbf{G}^F)}-1\,\sim_D\,
    \xi_{\boldsymbol{d}(\lambda,q), q}, \quad \text{where
      $\boldsymbol{d}(\lambda,q) = \boldsymbol{b}(\lambda,q) \cup
      \boldsymbol{a}^*(\lambda,q)$.}
  \]
  The union $\boldsymbol{d}(\Phi)$ of all the sets
  $\boldsymbol{d}(\lambda,q)$ arising in the above manner is finite,
  and the proof of \eqref{equ:AKOV-variant-claim1} is complete.

  It remains to justify \eqref{equ:AKOV-variant-claim2}, subject to a
  possibly required additional increment of~$D$.  First we observe
  from \eqref{equ:AKOV-variant-claim1} and~\eqref{equ:approx-unip}
  that, subject to $D \ge \max \{1, D_0^{\, 2} \}$,
  \[
    \zeta_{\mathbf{G}^F/\mathrm{Z}(\mathbf{G}^F)} = 1 +
    (\zeta_{\mathbf{G}^F/\mathrm{Z}(\mathbf{G}^F)} - 1) \,\sim_D\, 1
    +\xi_{\boldsymbol{d}(\lambda,q), q} \,\sim_D\, 1
    +\xi_{\boldsymbol{a}^*(\lambda,q), q}.
  \]
  As explained in \cite[Remark~2.7]{AKOV16}, it is pertinent to
  compare the ``north-west''-Newton polytopes associated to
  $\{(0,0)\} \cup \boldsymbol{a}^*(\lambda,q)$ and
  $\{(0,0), (\rk \Phi, \lvert \Phi ^+ \rvert)\}$.  Our construction of
  $\boldsymbol{a}^*(\lambda,q)$ ensures that
  $(\rk \Phi, \lvert \Phi ^+ \rvert) \in \boldsymbol{a}^*(\lambda,q)$
  for all sufficiently large~$q$ and that
  \[
    m \le \rk(\Phi), \quad n \le \lvert \Phi^+ \rvert, \quad m/n \le
    \rk(\Phi) / \lvert \Phi^+ \rvert\quad \text{for all
      $(m,n) \in \boldsymbol{a}^*(\lambda,q)$.}
  \]
  This shows that the two relevant Newton polytopes are the same for
  all sufficiently large~$q$. By \cite[Remark~2.7]{AKOV16}, a suitable
  increment of $D$ leads to~\eqref{equ:AKOV-variant-claim2}.
\end{proof}

\begin{remark}
  In the proof of \cref{thm:Approximation-simple-groups} we treat
  explicitly the case that the connected centraliser of a semisimple
  element is not a Levi subgroup, a situation which occurs, for
  instance, in symplectic groups.  It seems that in \cite[Proof of
  Lemma~3.13]{AKOV16} this possibility was simply overlooked.  Our
  analysis can be used in essentially the same way to fill this
  potential gap.
\end{remark}

%\bibliography{Mylibrary} 

\begin{thebibliography}{99}

\bibitem{AiAv18}  A.~Aizenbud and N.~Avni.
  \newblock Counting points of schemes over finite rings and counting
  representations of arithmetic lattices.
  \newblock {\em Duke Math.\ J.}, 167(14):2721--2743, 2018.

\bibitem{AKOV16} N.~Avni, B.~Klopsch, U.~Onn, and C.~Voll.
  \newblock Arithmetic groups, base change, and representation growth.
  \newblock {\em Geom.\ Funct. Anal.}, 26(1):67--135, 2016.

\bibitem{AKOV16b} N.~Avni, B.~Klopsch, U.~Onn, and C.~Voll.
  \newblock Similarity classes of integral $\mathfrak{p}$-adic matrices and
  representation zeta functions of groups of type $\mathsf{A}_2$.
  \newblock {\em Proc.\ Lond.\ Math.\ Soc.}, 112(2):267--350, 2016.

% \bibitem{BLMM02} H.~Bass, A.~Lubotzky, A.~R. Magid, and S.~Mozes.
%   \newblock The proalgebraic completion of rigid groups.
%   \newblock {\em Geom.\ Dedicata}, 95(1):19--58, 2002.

% \bibitem{Car81} R.~W. Carter.
%   \newblock Centralizers of semisimple elements in the finite
%   classical groups.
%   \newblock {\em Proc.\ Lond.\ Math.\ Soc.}, 42(1):1--41, 1981.

\bibitem{Car93} R.~W. Carter.
  \newblock {\em Finite groups of {{Lie}} type: conjugacy classes and
    complex characters}.
  \newblock Wiley Classics Library. Chichester; New York, 1993.

\bibitem{Co78} B.~N.~Cooperstein.
  \newblock Minimal degree for a permutation representation of a
  classical group.
  \newblock {\em Israel J.\ Math.}, 30(3):213--235, 1978.

\bibitem {CoKiVa24} 
  G.~Corob Cook, S.~Kionke and M.~Vannacci.
  \newblock Weil zeta functions of group representations over finite
  fields.
  \newblock {\em Selecta Math.\ (N.S.)}, 30(3):Paper No.\ 46,
    57 pp., 2024.
  
% \bibitem{DL76} P.~Deligne and G.~Lusztig.
%   \newblock Representations of reductive groups over finite fields.
%   \newblock {\em Ann.\ of Math.}, 103(1):103--161, 1976.

\bibitem{DeLi85} D.~I.~Deriziotis and M.~W.~Liebeck.
  \newblock Centralizers of semisimple elements in finite twisted
  groups of Lie type.
  \newblock {\em J.\ Lond.\ Math.\ Soc.}, 31(1):48--54, 1985.

\bibitem{DM20} F.~Digne and J.~Michel.
  \newblock {\em Representations of finite groups of {{Lie}} type}.
  \newblock {Cambridge University Press}, 2 edition, 2020.

\bibitem{Di69}  J.~D.~Dixon.
  \newblock The probability of generating the symmetric group.
  \newblock {\em Math.\ Z.}, 110:199--205, 1969.
  
\bibitem{Gar16} J.~{Garc{\'i}a Rodr{\'i}guez}.
  \newblock {\em Representation growth}.
  \newblock PhD thesis, Madrid, 2016.
  \newblock available as: \texttt{arxiv.1612.06178}.

\bibitem{GM20} M.~Geck and G.~Malle.
  \newblock {\em The character theory of finite groups of {{Lie}}
    type: a guided tour}.
\newblock {Cambridge University Press}, 1 edition, 2020.

\bibitem{GLS97} D.~Gorenstein, R.~Lyons, and R.~Solomon.
  \newblock {\em The classification of the finite simple groups,
    {{Number}} 3}, volume 40.3 of {\em Mathematical {{Surveys}} and
    {{Monographs}}}.
  \newblock American Mathematical Society, Providence, Rhode Island,
  1997.

% \bibitem{Gr02} L.~C.~Grove.
%   \newblock {\em Classical groups and geometric algebra}, Grad.\
%   Stud.\ Math., 39.
%   \newblock American Mathematical Society, Providence, Rhode Island,
%   2002.
  
% \bibitem{Gri84} R.~I. Grigorchuk.
%   \newblock Degrees of growth of finitely generated groups and the
%   theory of invariant means.
%   \newblock {\em Izvestiya Akademii Nauk SSSR. Seriya
%     Matematicheskaya}, 48(5):939--985, 1984.

\bibitem{Gro67} A.~Grothendieck.
  \newblock \'{E}l\'{e}ments de g\'{e}om\'{e}trie
  alg\'{e}brique. {IV}. \'{E}tude locale des sch\'{e}mas et des
  morphismes de sch\'{e}mas {IV}.
  \newblock {\em Inst.\ Hautes {\'E}tudes Sci.\ Publ.\ Math.}, 32:361, 1967.

\bibitem{KaLu90} W.~M.~Kantor and A.~Lubotzky.
  \newblock The probability of generating a finite classical group.
  \newblock {\em Geom.\ Dedicata}, 36(1):67--87, 1990.
  
\bibitem{KN06} M.~Kassabov and N.~Nikolov.
  \newblock Cartesian products as profinite completions.
  \newblock {\em Int.\ Math.\ Res.\ Not.}, Art.\ ID 72947, 17 pp., 2006.

\bibitem{KW91} P.~B. Kleidman and D.~B. Wales.
  \newblock The projective characters of the symmetric groups that
  remain irreducible on subgroups.
  \newblock {\em J.\ Algebra}, 138(2):440--478, 1991.

\bibitem{LaLu08} M.~Larsen and A.~Lubotzky.
  \newblock Representation growth of linear groups.
  \newblock {\em J.\ Eur.\ Math.\ Soc.}, 10: 351--390, 2008.
  
\bibitem{LS04} M.~W. Liebeck and A.~Shalev.
  \newblock Fuchsian groups, coverings of {{Riemann}} surfaces,
  subgroup growth, random quotients and random walks.
  \newblock {\em J.\ Algebra}, 276(2):552--601, 2004.

\bibitem{LS05} M.~W. Liebeck and A.~Shalev.
  \newblock Character degrees and random walks in finite groups of
  {{Lie}} type.
  \newblock {\em Proc.\ Lond.\ Math.\ Soc.}, 90(01):61--86, 2005.

\bibitem{LM04} A.~Lubotzky and B.~Martin.
  \newblock Polynomial representation growth and the congruence
  subgroup problem.
  \newblock {\em Israel J.\ Math.}, 144(2):293--316, 2004.

\bibitem{LPS96} A.~Lubotzky, L.~Pyber, and A.~Shalev.
  \newblock Discrete groups of slow subgroup growth.
  \newblock {\em Israel J.\ Math.}, 96(2):399--418, 1996.

\bibitem{LS03} A.~Lubotzky and D.~Segal.
  \newblock {\em Subgroup growth}.
  \newblock {Birkh{\"a}user Basel}, {Basel}, 2003.

% \bibitem{Lus75} G.~Lusztig.
% \newblock Sur la conjecture de {M}acdonald.
% \newblock {\em C.\ R.\ Acad.\ Sci.\ Paris S{\'e}r.\ A-B},
% 280(6):A317--A320, 1975.

% \bibitem{Lus78} G.~Lusztig.
%   \newblock {\em Representations of finite {{Chevalley}} groups}.
%   \newblock CBMS Regional Conf.\ Ser.\ in Math., 39 American
%   Mathematical Society, Providence, RI, 1978.

\bibitem{MT11} G.~Malle and D.~Testerman.
  \newblock {\em Linear algebraic groups and finite groups of {{Lie}}
    type}.
  \newblock {Cambridge University Press}, 1 edition, 2011.

\bibitem{Nar83} W.~Narkiewicz.
  \newblock {\em Number theory}.
  \newblock World Scientific, Singapore, 1983.

\bibitem{NT13} G.~Navarro and P.~H. Tiep.
  \newblock Characters of relative {${p'}$}-degree over normal
  subgroups.
  \newblock {\em Ann.\ of Math.}, 178(3):1135--1171, 2013.

\bibitem{Pic24} M.~Piccolo.
  \newblock {\em On the representations of quasi-semisimple profinite
    groups, compact $p$-adic analytic groups, and
    {B}aumslag--{S}olitar groups}.
  \newblock PhD thesis, D\"usseldorf, 2024.
  
\bibitem{Pyb03} L.~Pyber.
  \newblock Old groups can learn new tricks.
  \newblock In {\em Groups, combinatorics \& geometry ({{D}}urham,
    2001)}, pages 243--255. World Sci. Publ., River Edge, NJ, 2003.

% \bibitem{RZ10} L.~Ribes and P.~Zalesskii.
%   \newblock {\em Profinite groups}.
%   \newblock Springer Berlin Heidelberg, Berlin, Heidelberg, 2010.

\bibitem{Seg01} D.~Segal.
\newblock The finite images of finitely generated groups.
\newblock {\em Proc.\ Lond.\ Math.\ Society}, 82(3):597--613, 2001.

\bibitem{Spr98} T.~A. Springer.
  \newblock {\em Linear algebraic groups}.
  \newblock Birkh{\"a}user Boston, Boston, MA, second edition, 1998.

\bibitem{St60} R.\ Steinberg.
  \newblock Automorphisms of finite linear groups.
  \newblock {\em Canadian J.\ Math.}, 12:606--615, 1960.

\bibitem{St68} R.\ Steinberg.
  \newblock {\em Lectures on Chevalley groups}.
  \newblock Yale University, New Haven, CT, 1968.
  
\bibitem{Ste89} J.~R. Stembridge.
  \newblock Shifted tableaux and the projective representations of
  symmetric groups.
  \newblock {\em Adv.\ Math.}, 74(1):87--134, 1989.

\bibitem{Wie80} J.~Wiegold.
\newblock Growth sequences of finite groups. {{IV}}.
\newblock {\em J.\ Austral.\ Math.\ Soc.}, 29(1):14--16, 1980.

\bibitem{Wil98} J.~S.~Wilson.
  \newblock {\em Profinite groups}, London Math.\ Soc.\ Monogr.\ (N.S.), 19.
  \newblock The Clarendon Press, Oxford University Press, New York,
  1998.

% \bibitem{Wil09} R.~A. Wilson.
%   \newblock {\em The finite simple groups}, volume 251 of {\em
%     Graduate {{Texts}} in {{Mathematics}}}.
% \newblock {Springer London}, {London}, 2009.
  
\end{thebibliography}
%\bibliographystyle{abbrv}

%%%%%

\end{document}